\documentclass[onefignum,onetabnum]{siamart190516}





\usepackage{amsmath}

\usepackage{amsfonts, psfrag, graphicx, amssymb,enumitem,indentfirst,float,geometry,color,enumerate,graphicx,subcaption,algorithm,algpseudocode,pifont,hyperref,mathtools,mathrsfs}
\graphicspath{{./Figure/}}
\usepackage{mathbbol}
\usepackage{pbox}
\usepackage{booktabs, cellspace, hhline}
\setlength\cellspacetoplimit{4pt}
\setlength\cellspacebottomlimit{4pt}

\allowdisplaybreaks[4]
 \geometry{left=3cm,right=3cm,top=2.5cm,bottom=2.5cm}

\numberwithin{equation}{section}
\numberwithin{figure}{section}

\usepackage{etoolbox}
\patchcmd{\thebibliography}{\chapter*}{\section*}{}{}
\usepackage[title]{appendix}

\usepackage{lipsum}

\newsiamremark{assumption}{Assumption}
\newsiamremark{remark}{Remark}


\newcommand{\commentout}[1]{{}} 


\newcommand{\abs}[1]{\left|#1\right|}

\newcommand{\bfa}{{\bf a}}

\newcommand{\bfH}{{\bf H}}

\newcommand{\bfn}{{\bf n}}

\newcommand{\bfP}{{\bf P}}
\newcommand{\bfp}{{\bf p}}

\newcommand{\bft}{{\bf t}}

\newcommand{\bfu}{{\bf u}}

\newcommand{\bfv}{{\bf v}}

\newcommand{\bfx}{{\bf x}}

\newcommand{\bfz}{{\bf z}}

\newcommand{\conv}{\operatorname{conv}}
\newcommand{\ddiv}{\operatorname{div}}
\newcommand{\rot}{\operatorname{rot}}

\newcommand{\jump}[1]{\left[\!\left[#1\right]\!\right]}
\newcommand{\dd}{\,{\rm d}}

\newcommand{\vertiii}[1]{{\left\vert\kern-0.25ex\left\vert\kern-0.25ex\left\vert #1
    \right\vert\kern-0.25ex\right\vert\kern-0.25ex\right\vert}}
    \newcommand{\vertii}[1]{{\left\vert\kern-0.25ex\left\vert #1
    \right\vert\kern-0.25ex\right\vert}}

\begin{document}
\title{
On the maximum angle conditions for polyhedra\\
 with virtual element methods
}
\author{Ruchi Guo \thanks{Department of Mathematics, University of California, Irvine, CA 92697 (ruchig@uci.edu).} 
\funding{This work was funded in part by NSF grant DMS-2012465.}
}
\date{}
\maketitle
\begin{abstract}
Finite element methods are well-known to admit robust optimal convergence on simplicial meshes satisfying the maximum angle conditions.
But how to generalize this condition to polyhedra is unknown in the literature. 
In this work, we argue that this generation is possible for virtual element methods (VEMs).
In particular, we develop an anisotropic analysis framework for VEMs where the virtual spaces and projection spaces remain abstract and can be problem-adapted, 
carrying forward the ``virtual'' spirit of VEMs. 
Three anisotropic cases will be analyzed under this framework: 
(1) elements only contain non-shrinking inscribed balls but \textit{are not necessarily star convex} to those balls; 
(2) elements are cut arbitrarily from a background Cartesian mesh, which can extremely shrink; 
(3) elements contain different materials on which the virtual spaces involve discontinuous coefficients. 
The error estimates are guaranteed to be independent of polyhedral element shapes. 
The present work largely improves the current theoretical results in the literature and also broadens the scope of the application of VEMs.
\end{abstract}

\begin{keywords}
Virtual element methods, 
anisotropic analysis, 
maximum angle conditions, 
polyhedral meshes, 
fitted meshes, 
unfitted meshes, 
immersed finite element methods,
interface problems.
\end{keywords}


\section{Introduction}

Polyhedral meshes admit many attractive features, especially the flexibility to adapt to complex geometry. For existing works on polyhedral meshes, say discontinuous Galerkin methods \cite{2012BassiBottiColombo,2013AntoniettiGianiHouston,2022CangianiDongGeorgoulis,2017CangianiDongGeorgoulisHouston,2014CangianiGeorgoulisHouston,2020PietroDroniou}, mimetic finite difference methods \cite{2005BrezziLipnikovShashkov,2005BREZZILIPNIKOVSIMONCINI,2010VeigaManzini}, weak Galerkin methods \cite{2012MuWwangYe,2014WangYe} and virtual element methods (VEMs) \cite{BEIRAODAVEIGA20171110,Brenner;Sung:2018Virtual,2017VeigaCarloAlessandro,2013BeiraodeVeigaBrezziCangiani,2017AndreaEmmanuilSutton} to be discussed, their assumptions for element shape eventually turn into the conventional shape regularity if the polyhedral meshes reduce to simplicial meshes. However, for simplicial meshes, it is well-known that the robust optimal convergence of  finite element methods (FEMs) can be achieved merely under the \textit{maximum angle conditions} \cite{1976BabuskaAziz,1999Duran,2020KobayashiTsuchiya,1992Michal,1994Shenk} which allows extremely narrow and thin elements.
However, how to extend such a condition to polyhedral elements is far from straightforward and still remains open. 
In this work, with VEMs we demonstrate that the extension can be achieved by assuming the existence of a boundary triangulation on polygonal faces of elements satisfying the 2D maximum angle condition and, roughly speaking, of the three edges not nearly coplanar, see Remark \ref{rem_tetangle} for the explanation.



The VEMs were first introduced in  \cite{2013BeiraodeVeigaBrezziCangiani}, 
and the key idea is to develop local problems to construct virtual spaces for approximation. 
VEMs possess several attractive features, especially the conformity to the underlying Hilbert spaces and the flexibility to handle arbitrary polygonal or polyhedra element shapes. 
Those features bring numerous applications of VEMs in many fields. 
For instances, in \cite{2014BenedettoPieraccini,2022BenvenutiChiozziManziniSukumar,2017ChenWeiWen,2018ThanhZhuangXuan,2018HusseinBlaFadi}, 
VEMs are used on meshes cut by interfaces, fractures and cracks, and the convergence is robust with respect to these highly anisotropic meshes, which benefits mesh generation procedure in these problems. 
In fact, it was observed in \cite{BEIRAODAVEIGA20171110} that the optimal convergence of VEMs can be achieved on Voronoi meshes of which the control vertices are randomly generated. 
The robustness to element shapes also benefits solving multiscale problems \cite{2021XieWangFeng,2019RivarolaBenedettoLabanda,2020SreekumarTriantafyllouBcot}. 
In \cite{2021CaoChenGuoIVEM,2022CaoChenGuo}, the authors developed virtual spaces of which the local problems involve discontinuous coefficients, which can be used for unfitted mesh methods of interface problems. 
However, the error analysis for VEMs on anisotropic meshes seems quite challenging. 
Most of the earlier works \cite{2017VeigaCarloAlessandro,2013BeiraodeVeigaBrezziCangiani,2017AndreaEmmanuilSutton} assume the following shape regularity: 
an element $K$ together with each of its faces and edges are all star-convex to a non-shrinking ball in 3D, 2D and 1D, respectively; 
namely the radius of the ball is $\mathcal{O}(h_K)$. In another word, this assumption rules out short edges, small faces, and shrinking elements.


Some efforts have been made to relax the shape conditions.
As a fundamental advance, the ``no small face" and ``no short edge" assumptions were relaxed by Brenner et al. in \cite{Brenner;Sung:2018Virtual} 
in which the balls for star convexity on faces and edges, respectively, are allowed to have the radius $\mathcal{O}(h_F)$ and $\mathcal{O}(h_e)$ instead of $\mathcal{O}(h_K)$. 
The latter one trivially means the edges can be arbitrarily small, while the former one means that a face $F$ can be small but cannot be thin, say thin rectangles. 
However, as the price to pay, their estimates involve an unfavorable factor in the error bound: $\ln(1 + \max_F \tau_F )$,
with $\tau_F$ being the ratio of the longest edge and shortest edge. What's more, the analysis in \cite{Brenner;Sung:2018Virtual} largely relies on the star convexity of elements.
In the 2D case, these conditions are completely circumvented by the approach in \cite{2018CaoChen} through a specially-designed stabilization, without introducing the ``$\ln$" factor. 
A similar work in \cite{2021CaoChenGuoIVEM} extends the approach to virtual spaces with discontinuous coefficients, still in 2D. 
For 3D VEMs, the authors in \cite{Cao;Chen:2018AnisotropicNC} considered a nonconforming VEM without the star-convexity assumptions of faces and edges, 
and they proposed a ``height condition" to replace the star convexity of $K$, i.e., the height of a face $F$ towards its neighbor element $K$ must be $\mathcal{O}(h_F)$. 
This condition follows the one in \cite{2014WangYe} which is a bit more restrictive, i.e., the height must be $\mathcal{O}(h_K)$. 
However, in \cite{Cao;Chen:2018AnisotropicNC}, the method relies on a projection $\Pi_{\omega_K}$ defined onto the whole patch $\omega_K$ of an element $K$ 
whose computation is inevitably complex and expensive. 
It is also worthwhile to mention that, many of these works can only handle the energy norm estimates, as the ``$\ln$"  factor seems difficult to be removed when estimating the $L^2$ errors.

In summary, many works have been devoted to the 3D anisotropic analysis of VEMs, yet the problem still remains quite open.
In this work, we develop a framework for the anisotropic analysis of VEMs for the following model problem: find $u\in H^1_0(\Omega)$
\begin{subequations}
\label{model}
\begin{align}
\label{inter_PDE}
a(u,v) := (\beta \nabla u, \nabla v)_{\Omega} = (f,v)_{\Omega}, ~~~ \forall v\in H^1_0(\Omega)
\end{align}
\end{subequations}
where $\Omega$ is a 3D domain, $f\in L^2(\Omega)$, and $\beta$ will be specified later.
 We show the robust optimal convergence for VEMs on three types of meshes: 
 (1) elements contain \textit{but not necessarily star-convex} to non-shrinking balls; 
 (2) elements are cut arbitrarily from a background Cartesian mesh which may extremely shrink; 
 (3) elements contain different materials on which the virtual spaces involve discontinuous coefficients.  
 We illustrate in Figures \ref{fig:anisotropic_element}-\ref{fig:cub_example} for the three cases. 
 Note that the aforementioned works in the literature cannot cover either of these three cases, as there is no star convexity condition or height condition anymore. 
 The presented analysis can benefit many applications of VEMs for interface, fracture and crack problems \cite{2014BenedettoPieraccini,2022BenvenutiChiozziManziniSukumar,2017ChenWeiWen,2018HusseinBlaFadi,2018ThanhZhuangXuan}.
 In addition, it is highlighted that all the three cases above allow short edges and small faces, and the case (2) even allows very thin elements.


\begin{figure}[h]
  \centering
    \begin{minipage}{0.45\textwidth}
  \centering
   \includegraphics[width=1.5in]{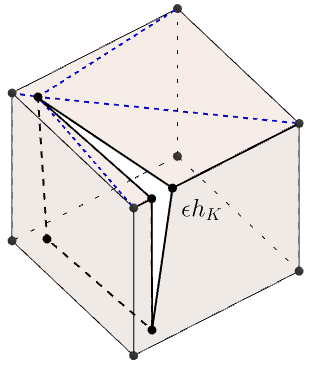}
  \caption{Example of an anisotropic element $K$ which contains a ball of the radius $\mathcal{O}(h_K)$ but is not star convex to it. 
  The left face is not supported by $\mathcal{O}(h_K)$ height towards $K$.
  But its boundary admits a triangulation satisfying the maximum angle condition specified by Assumptions \hyperref[asp:A1]{A1} and \hyperref[asp:A2]{A2}. 
  This element corresponds to Case (1) studied in this paper.}
  \label{fig:anisotropic_element}
  \end{minipage}
 ~~~ 
  \begin{minipage}{0.45\textwidth}
  \centering
   \includegraphics[width=1.5in]{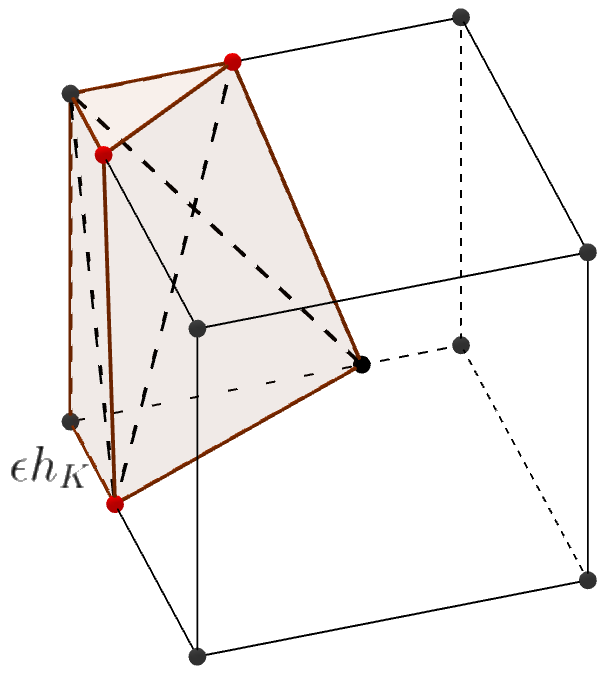}
  \caption{Example of a possibly very thin element $K$ cut from a cub ($\epsilon \rightarrow 0$). 
  It does not even contain a ball of the radius $\mathcal{O}(h_K)$. 
  In addition, the interior dihedral angle $A_5$-$D_1D_4$-$D_2$ may approach $\pi$ such that the 3D triangulation does not satisfy the maximum angle condition, but the boundary triangulation still has the bounded maximum angle.
  This element corresponds to Case (2) studied in this paper. 
  We can also treat the two subelements of the cub together as a entire polyhedron even if they contain discontinuous PDE coefficients. 
  Then, this cube corresponds to Case (3) studied in this paper.}
  \label{fig:cub_example}
  \end{minipage}
\end{figure}

The analysis of the three anisotropic cases described above can facilitate many applications of VEMs. 
For instance, the case (1) may appear for simulating crack propagation with a background shape regular meshes \cite{2014BenedettoPieraccini,2022BenvenutiChiozziManziniSukumar,2018HusseinBlaFadi}, see Figure \ref{fig:anisotropic_element} for an example.
The case (2) may appear when solving interface problems on a background Cartesian mesh 
with a fitted mesh formulation \cite{2017ChenWeiWen,2009ChenXiaoZhang,1998ChenZou}, while the case (3) is for an unfitted mesh formulation \cite{2020AdjeridBabukaGuoLin,2015BurmanClaus,2021CaoChenGuoIVEM,2022CaoChenGuo,2020GuoLin}. See Figure \ref{fig:cub_example} for illustration.
It is also worthwhile to mention that, in most of these applications, the truly anisotropic elements are generally merely concentrated around a 2D manifold in the 3D domain, 
and thus a block-diagonal smoother can alleviate the ill-conditioning issue caused by the irregular element shapes, see \cite{2022CaoChenGuo,2021HuWang}. 
Hence, the application of VEMs to these problems can greatly facilitate effective and efficient computation, which is particularly advantageous for the problems of moving interfaces \cite{2020Guo,2018GuoLinLinElasto,2022MaZhangZheng} and growing cracks \cite{2001DolbowMoesBelytschko,1999MoesDolbowBelytschko}. 


To achieve the robust optimal error estimates, there are two major theoretical innovations in this work. 
The first one is to build the maximum angle condition for general polyhedra through the boundary triangulation. 
This trick, first introduced in \cite{2017ChenWeiWen}, follows from the practice that a 2D triangulation is generally easy, though 3D triangulation can be much more difficult.
In fact, it is trivial that a 3D triangulation satisfying the maximum angle condition must result in a 2D triangulation on the polyhedron boundary with the bounded maximum angle.
However, the converse is not true in general even for a simple prism, see Figure \ref{fig:cub_example} for a counter-example and see Section \ref{sec:fitted _2} for a more detailed discussion of the geometry.
Thus, the present work mainly requiring 2D triangulation on the boundary can largely reduce the complexity of mesh generation, see Figures \ref{fig:anisotropic_element} and \ref{fig:cub_example}, for example.
In addition, we refer readers to \cite{2021CaoChenGuo} for using a virtual mesh satisfying the maximum angle condition to show anisotropic analysis of VEMs.

The second one is a unified and systematic analysis framework for VEMs. 
Under this framework, build on the `virtual" spirit of VEMs, the virtual spaces, local problems and spaces for projection remain abstract and can be adapted to the underlying problems' nature in various applications. 
For instance, the local problems can involve singular coefficients, 
and the spaces for the projection are free of polynomials and can be chosen as any computable spaces as long as they can provide sufficient approximation capability. 
This is particularly advantageous for $\beta$ in \eqref{model} being non-smooth functions, in that the spaces themselves already encode the singularity information. 
The idea here is closely related to multiscale FEMs \cite{1997HouWu,2010ChuGrahamHou} and generalized FEMs \cite{1983BabuskaOsborn,1994BabuskaCalozOsborn}.

To sum up, the proposed analysis is largely different from those in the literature \cite{2013BeiraodeVeigaBrezziCangiani,Brenner;Sung:2018Virtual} 
which, roughly speaking, all try to estimate $u-u_I$ by establishing an affine mapping to certain reference elements or balls, where $u_I$ is the virtual interpolation. Such an affine mapping leads to many critical theoretical tools including the trace inequality, Poincar\'e-type inequality, interpolation estimates, etc. In this work, we will employ a completly different way to show these inequalities by establishing a group of delicate results regarding the maximum angle conditions, which, to our best knowledge, have not appeared in any literature.
Another highlight is that the classical error estimates based on the maximum angle conditions requiring relatively higher regularity \cite{1999Duran,1994Shenk} are circumvented in this work.

This article consists of 5 additional sections. 
In the next section, we present an abstract framework for VEMs by introducing several so-called hypotheses and showing these hypotheses can lead to optimal errors. 
In Section \ref{sec:boundary}, we establish the face triangulation and boundary spaces. 
In the next 3 sections, we analyze the aforementioned 3 types of VEMs by verifying the hypotheses. 
Some technical results will be given in the Appendix.




\section{A unified framework}
\label{sec:unifyframe}

Throughout this article, let $\mathcal{T}_h$ be a polyhedral mesh of $\Omega$,
let $h_K$ be the diameter of an element $K$, and define $h=\max_{K}h_K$. 
We denote the collection of faces and edges of an element $K$ as $\mathcal{F}_K$ and $\mathcal{E}_K$. 
The mesh $\mathcal{T}_h$ is allowed to have shrinking edges, faces and elements, and the geometrical conditions are left for later discussions in detail. 
Let $H^m(D)$ be the standard Sobolev space on a region $D$ and let $H^1_0(D)$ be the space with the zero trace on $\partial D$. 
We further let $\bfH(\ddiv;D)=\{\bfu\in \bfH(D), \ddiv(u)\in H(D)\}$. 
In addition, $\|\cdot\|_{m,D}$ and $|\cdot|_{m,D}$ denote the norms and semi norms. The $L^2$ inner product is then denoted as $(\cdot,\cdot)_D$.


In this section, we develop a unified analysis framework for VEMs. 
We will first establish VEM quintuplets, then present some general hypotheses they should satisfy and show these hypotheses can yield optimal convergence. 
For simplicity's sake, we shall employ the notation $\lesssim$ and $\gtrsim$ representing $\le C$ and $\ge C$ where $C$ is a generic constant independent of element shape and size. 
In addition, the notation $\simeq$ denotes equivalence where the hidden constant $C$ has the same property. 



\subsection{Abstract setup of VEM}
Mimicking the Ciarlet's finite element triplets \cite{Ciarlet1975LecturesOT}, 
we introduce quintuplets for the description of basic ingredients of a VEM. Given an element $K$, we define $(K,\mathcal{B}_h(\partial K), \mathcal{V}_h(K), \mathcal{W}_h(K),\mathcal{D}_K)$ where the components are explained as below
\begin{itemize}
\item $\mathcal{B}_h(\partial K)\subseteq H^1(\partial K)$ is a finite-dimensional space defined on $\partial K$ called the \textit{trace space};
\item $\mathcal{V}_h(K)$ is a finite-dimensional virtual element space defined as
\begin{equation}
\label{lifting}
\mathcal{V}_h(K) = \{ \mathscr{L}_Kv_h: v_h\in \mathcal{B}_h(\partial K) \} \subset H^1(K),
\end{equation}
where $\mathscr{L}_K:\mathcal{B}_h(\partial K)\rightarrow H^1(K)$ is a lifting operator extending boundary functions to the element interior;
\item $\mathcal{W}_h(K) \subset H^1(K)$ is a computable finite-dimensional space onto which $\mathcal{V}_h(K)$ is projected;
\item $\mathcal{D}_K=\{L_1,{L}_2,...,{L}_{N_K}\}$ is a set of linear forms on $\mathcal{B}_h(\partial K)$ such that
\begin{equation}
\label{DoF_def}
v_h \rightarrow (L_1(v_h), L_2(v_h), ..., L_{N_K}(v_h))
\end{equation}
is bijective, which describes the DoFs with $N_K$ being its number.
\end{itemize}
The global space is defined as
\begin{equation}
\label{global_space}
\mathcal{V}_h = \{ v_h|_{\partial \Omega} = 0 :~ v_h|_K \in \mathcal{V}_h(K) , ~ \forall K\in \mathcal{T}_h \} \subseteq H^1_0(\Omega).
\end{equation}

With the preparation above, for some suitable $u$ with sufficient regularity admitting pointwise evaluation, 
we define the interpolation $I_{\partial K} u\in \mathcal{B}_h(\partial K)$ such that $L_j(I_{\partial K} u) = L_j(u), ~ j =1,2,..., N_K$. 
Then, the interpolation on $K$ is defined as
\begin{equation}
\label{bound_interp_K}
I_K u : = \mathscr{L}_K I_{\partial K} u.
\end{equation}
Through this work, we shall use $u_I$ to denote the global interpolation.

Notice that $\beta$ in some cases may not be computed exactly where the error may be caused by surface geometry, quadrature, etc.
To pursue the completeness of the proposed framework, we introduce its approximation denoted as $\beta_h$.
As functions in $\mathcal{V}_h(K)$ are not computable, 
we need a projection operator $\Pi_K: H^1(K) \rightarrow \mathcal{W}_h(K)$ defined as
\begin{equation}
\label{Pi_proj}
( \beta_h \nabla \Pi_K v_h, \nabla w_h)_{K} = ( \beta_h \nabla v_h, \nabla w_h)_{K} , ~~~~ \forall w_h \in \mathcal{W}_h(K),
\end{equation}
where $\int_{\partial K} \Pi_K v_h \dd s = \int_{\partial K}  v_h \dd s$ is imposed for uniqueness. 
As $\forall w_h\in \mathcal{W}_h(K)$ is explicitly known, the projection in \eqref{Pi_proj} is computable:
\begin{equation}
\begin{split}
\label{Pi_proj_compute}
( \beta_h \nabla \Pi_K v_h, \nabla w_h)_{K} & = (  \nabla v_h, \beta_h \nabla w_h)_{K} \\
& = -  (   v_h, \text{div}( \beta_h \nabla w_h) )_{K} + (\beta_h \nabla w_h\cdot\bfn, v_h)_{0,\partial K},
\end{split}
\end{equation}
provided that $v_h$ is known on $\partial K$. 
In the present work, $\text{div}( \beta_h \nabla w_h)$ is actually $0$, i.e., $\beta_h \nabla \mathcal{W}_h$ is a divergence-free space making the first term vanished.
For simplicity, we shall use $\Pi$ to denote the global projection, i.e., $\Pi = \Pi_K$, on each $K$. 

Then, the local discrete bilinear form is defined as $a_K(\cdot,\cdot): H^1(K)\times H^1(K) \rightarrow \mathbb{R}$ where
\begin{equation}
\label{vem_scheme}
    a_K(u_h, v_h):=(\beta_h \nabla \Pi_K u_h, \nabla \Pi_K v_h)_{K}
    +S_K(u_h-\Pi_K u_h, v_h-\Pi_K v_h),
\end{equation}
where $S_K(\cdot,\cdot)$ is another semi-positive symmetric bilinear form, called the \textit{stabilization}, to make $a_K$ stable, as $( \nabla \Pi_K u_h, \nabla \Pi_K v_h)_{0,K}$ itself may not be stable. 
See the Hypotheses in the next subsection and the precise definition in Section \ref{subsec:stab}. 
Furthermore, the global bilinear form is defined as
\begin{equation}
\label{vem_scheme_glob}
a_h(u_h, v_h):=\sum_{K\in \mathcal{T}_h} a_K(u_h, v_h),
\end{equation} 
which should lead to a reasonably good approximation to $a(u_h, v_h)$, $\forall u_h, v_h \in H^1_0(\Omega)$ in \eqref{model} in certain sense.
Now, the virtual scheme is to find $u_h\in \mathcal{V}_h$ such that 
\begin{equation}
\label{eq_VEM}
    a_h(u_h, v_h) =\sum_{K\in \mathcal{T}_h}(f, \Pi_K v_h)_{L^2(K)}, ~~~~~ \forall v_h \in \mathcal{V}_h.
\end{equation}





\begin{remark}
\label{rem_discont}
For standard VEMs, the typical choice of $\mathcal{W}_h(K)$ is $\mathcal{P}_1(K)$. 
In Section \ref{sec:IVEM}, we will discuss an immersed virtual element space to handle discontinuous coefficients on unfitted meshes, 
where $\mathcal{W}_h(K)$ is a non-polynomial space.
\end{remark}


\subsection{General hypotheses}
\label{sec:assump}
Now, let us discuss general hypotheses on $\mathcal{B}_h(\partial K)$, $\mathcal{W}_h(K)$ and $S_K$. 
The bilinear form $a_K(\cdot,\cdot)$ leads to the following quantities:
\begin{equation}
\label{energy_norm}
\vertiii{v_h}^2_K = a_K(v_h,v_h), ~~~ \text{and} ~~~ \vertiii{v_h}^2_h = a_h(v_h,v_h).
\end{equation}


\begin{itemize}
\item[\textbf{(H1)}] \label{asp:H1} (Consistency) $\mathcal{P}_0( K)\subset \mathcal{W}_h( K)$, $\mathcal{P}_0(\partial K)\subset \mathcal{B}_h(\partial K)$ and $\mathcal{P}_0( K)\subset \mathcal{V}_h(K)$. The lifting operator $\mathscr{L}_K$ preserves constants, i.e., $\mathscr{L}_K:\mathcal{P}_0(\partial K) \rightarrow \mathcal{P}_0( K)$. 
  \item[\textbf{(H2)}]\label{asp:H2} (Stability) The stabilization $S_K(\cdot,\cdot)$ is non-negative and leads to a norm $\|\cdot\|_{S_K}$ 
on $\widetilde{\mathcal{B}}^0_h(\partial K): = \{v_h \in \mathcal{B}_h(\partial K) \oplus \text{Tr}_{\partial K} ~ \mathcal{W}_h(K) : (v_h,1)_{\partial K} = 0\}$ such that
\begin{equation}
\label{SK_equiv_1}
\| \cdot \|_{0,\partial K} \lesssim h^{1/2}_K\| \cdot \|_{S_K} ~~~ \text{in} ~ \widetilde{\mathcal{B}}^0_h(\partial K),
\end{equation}
where $\text{Tr}_{\partial K} $ denotes the trace operator on $\partial K$.
  \item[\textbf{(H3)}]\label{asp:H3} (The approximation capabilities of $\mathcal{V}_h(K)$) 
There holds
\begin{equation}
\label{approxi_VK_Pi}
\sum_{K\in \mathcal{T}_h} \vertiii{ u - u_I }^2_K \lesssim h^2 \| u \|^2_{2,\Omega}.
\end{equation}
\item[\textbf{(H4)}] \label{asp:H4} (The approximation capabilities of $\mathcal{W}_h(\partial K)$) There hold
\begin{subequations}
\label{Pi_approx}
\begin{align}
    &   \sum_{K\in \mathcal{T}_h}  \| \nabla (u - \Pi_K u) \|^2_{0,K} \lesssim h^2 \|u\|^2_{2,\Omega} ,   \label{Pi_approx_eq1} \\
    &   \sum_{K\in \mathcal{T}_h}  h_K\| \nabla(u - \Pi_K u) \|^2_{0,\partial K}  + \| u - \Pi_K u \|^2_{S_K}  \lesssim h^2 \|u\|_{2,\Omega} .   \label{Pi_approx_eq2} 
\end{align}
\end{subequations}
\item[\textbf{(H5)}] \label{asp:H5} (The extra hypothesis for $L^2$ estimates) There holds
\begin{equation}
\label{L2_assump}
  \sum_{K\in \mathcal{T}_h}  \| u - \Pi_K u \|^2_{0,K} + h_K \| u -  u_I \|^2_{0,\partial K} \lesssim h^4 \|u\|^2_{2,\Omega} .
\end{equation}

\item[\textbf{(H6)}] \label{asp:H6} (The approximation of $\beta_h$) There holds $\| \beta_h \|_{\infty,\Omega} \le \| \beta \|_{\infty,Omega}$, and
\begin{equation}
\label{betah_approx}
 \sum_{K\in\mathcal{T}_h}   \| \beta \nabla u - \beta_h \nabla u \|^2_{0,K} + h_K \| \beta \nabla u\cdot\bfn -  \beta_h \nabla u \cdot\bfn \|^2_{0,\partial K} \lesssim h^2 \|u\|^2_{2,\Omega} .
\end{equation}

\end{itemize}

\begin{remark}
~\\
\begin{itemize}
\item In the VEM framework, $\mathscr{L}_K$ is typically defined though local problems/PDEs, for instance
\begin{equation}
\label{lift1}
\nabla \cdot(\beta \nabla \mathscr{L}_K v_h )= 0, ~~~ \text{in} ~ K,~~~ v_h|_{\partial K} = b_h, ~~~ \text{on} ~ \partial K ~~~ \forall b_h \in \mathcal{B}_h(\partial K).
\end{equation} 
In this work, we see that this is not essential. The analysis is applicable to any lifting operators satisfying $\mathscr{L}_K:\mathcal{P}_0(\partial K) \rightarrow \mathcal{P}_0( K)$.

\item \hyperref[asp:H1]{(H1)} is referred to as consistence because of the following observation: 
if $\mathcal{P}_0(K)\subset \mathcal{V}_h(K)$, and taking $v_h=c\in\mathcal{P}_0(K)$ and $w_h = \Pi_K v_h$ in \eqref{Pi_proj_compute}, 
then there holds $\nabla\Pi_K v_h = \mathbf{ 0}$ and thus $\mathcal{P}_0(K)\subset \mathcal{W}_h(K)$. 
As $\text{Tr}_{\partial K}\mathcal{V}_h(K) = \mathcal{B}_h(\partial K)$, there holds $\mathcal{P}_0(\partial K)\subset \mathcal{B}_h(\partial K)$. 
In addition, the uniqueness condition $\int_{\partial K} \Pi_K v_h \dd s = \int_{\partial K}  v_h \dd s$ implies $\Pi_K$ preserves constants. 
We shall see in Lemma \ref{lem_norm} below that the constant space bases the coercivity.

\item For classical VEMs on isotropic meshes where $\mathcal{W}_h(K)$ is a polynomial space, 
usually one only needs to estimate $\|u-u_I\|_{j,K}$ and $\|u- \Pi_K u \|_{j,K}$ which can imply all the inequalities above through trace inequalities and Poincare-type inequalities \cite{Brenner;Sung:2018Virtual}.
For anisotropic elements, this implication may not be true, and thus we need to perform more delicate analysis here. 

\item \hyperref[asp:H2]{(H2)} generally holds on isotropic meshes even for functions in $H^1(\partial K)$ with the zero average thanks to the Poincar\'e-type inequality \cite[Section 2.4]{Brenner;Sung:2018Virtual}. 
In Section \ref{sec:boundary}, we shall see that the geometric restrictions can be much relaxed if we only require it to hold for a discrete space. 

\item Not that $\mathcal{W}_h(K)\subset \mathcal{V}_h(K)$ is not required in the present framework. 
But, in most situations, this is indeed true which implies $\text{Tr}_{\partial K} ~ \mathcal{W}_h(K) \subset \mathcal{B}_h(\partial K)$. 
In this case, \hyperref[asp:H2]{(H2)} is just needed for $\mathcal{B}^0_h(\partial K): = \{v_h \in \mathcal{B}_h(\partial K) : (v_h,1)_{\partial K} = 0\}$.

\item Estimation of the energy norms only requires \hyperref[asp:H1]{(H1)}-\hyperref[asp:H4]{(H4)} and \hyperref[asp:H6]{(H6)}, while the $L^2$ norm additionally requires \hyperref[asp:H5]{(H5)}. 
 In \hyperref[asp:H6]{(H6)}, we do not directly assume the estimate of $|\beta - \beta_h|$, as $\beta$ may have singularity. \eqref{betah_approx} is more relaxed.

\end{itemize}
\end{remark}

\begin{lemma}
\label{lem_norm}
Under Hypotheses \hyperref[asp:H1]{(H1)} and \hyperref[asp:H2]{(H2)}, $\vertiii{\cdot}_h$ defines a norm on $\mathcal{V}_h$.
\end{lemma}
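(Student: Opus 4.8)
The plan is to first recognize that $\vertiii{\cdot}_h$ is automatically a seminorm, so that the entire content of the lemma is the definiteness statement. From \eqref{vem_scheme} and \eqref{energy_norm} one has, for every $v_h$,
\[
\vertiii{v_h}_K^2 = (\beta_h \nabla \Pi_K v_h, \nabla \Pi_K v_h)_K + S_K(v_h - \Pi_K v_h, v_h - \Pi_K v_h),
\]
which, since the elliptic coefficient $\beta_h$ is positive and $S_K$ is non-negative by \hyperref[asp:H2]{(H2)}, is a sum of two non-negative quantities. Hence $a_h$ is a symmetric positive semidefinite bilinear form, and $\vertiii{\cdot}_h=\sqrt{a_h(\cdot,\cdot)}$ is non-negative, absolutely homogeneous, and satisfies the triangle inequality via the Cauchy--Schwarz inequality for $a_h$. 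It therefore remains only to prove that $\vertiii{v_h}_h=0$ implies $v_h=0$ for $v_h\in\mathcal{V}_h$.

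Assume $\vertiii{v_h}_h^2=\sum_{K}\vertiii{v_h}_K^2=0$. Since each $\vertiii{v_h}_K^2$ is itself a sum of non-negative terms, both terms vanish on every $K$. First, $(\beta_h \nabla \Pi_K v_h, \nabla \Pi_K v_h)_K=0$ together with the positivity of $\beta_h$ forces $\nabla \Pi_K v_h=0$, so $\Pi_K v_h$ equals a constant $c_K$ on $K$. Second, I would use the normalization $\int_{\partial K}\Pi_K v_h\dd s=\int_{\partial K}v_h\dd s$ built into \eqref{Pi_proj} to get $(v_h-\Pi_K v_h,1)_{\partial K}=0$; since moreover $\text{Tr}_{\partial K}(v_h-\Pi_K v_h)\in\mathcal{B}_h(\partial K)\oplus\text{Tr}_{\partial K}\mathcal{W}_h(K)$, its trace lies in $\widetilde{\mathcal{B}}^0_h(\partial K)$. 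Then $S_K(v_h-\Pi_K v_h,v_h-\Pi_K v_h)=0$, combined with the fact that $\|\cdot\|_{S_K}$ is a genuine \emph{norm} on $\widetilde{\mathcal{B}}^0_h(\partial K)$ by \hyperref[asp:H2]{(H2)}, forces $\text{Tr}_{\partial K}(v_h-\Pi_K v_h)=0$, i.e. $v_h=\Pi_K v_h=c_K$ on $\partial K$.

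Combining the two, $v_h|_{\partial K}$ is the constant $c_K$. Invoking \hyperref[asp:H1]{(H1)}, under which $\mathscr{L}_K$ preserves constants, together with the definition \eqref{lifting} of $\mathcal{V}_h(K)$ as liftings of boundary traces, I conclude $v_h=\mathscr{L}_K(c_K)=c_K$ on all of $K$, so $v_h$ is piecewise constant on $\mathcal{T}_h$. Finally, because $v_h\in\mathcal{V}_h\subseteq H^1_0(\Omega)$ by \eqref{global_space}, the $H^1$-conformity forces these element constants to agree across shared faces; hence on the connected domain $\Omega$ the function $v_h$ is globally constant, and its zero boundary trace forces that constant to be zero, giving $v_h=0$.

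The main obstacle, and the only place where the hypotheses do real work, is the second vanishing argument: one must verify that $\text{Tr}_{\partial K}(v_h-\Pi_K v_h)$ genuinely lands in the space $\widetilde{\mathcal{B}}^0_h(\partial K)$ on which $S_K$ is assumed to be a norm. This is exactly why the zero-average normalization of $\Pi_K$ in \eqref{Pi_proj} was imposed, and why \hyperref[asp:H2]{(H2)} is phrased on $\widetilde{\mathcal{B}}^0_h(\partial K)$ rather than on all of $\mathcal{B}_h(\partial K)$. A secondary point to handle carefully is the passage from ``constant on each element'' to ``globally zero'', which relies on the inclusion $\mathcal{V}_h\subseteq H^1_0(\Omega)$ and the connectedness of $\Omega$ rather than on \hyperref[asp:H1]{(H1)}--\hyperref[asp:H2]{(H2)} directly.
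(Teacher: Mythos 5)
Your proof is correct and takes essentially the same route as the paper's: vanishing of the projected gradient gives $\Pi_K v_h\in\mathcal{P}_0(K)$, vanishing of the stabilization together with (H2) forces $v_h|_{\partial K}=\Pi_K v_h|_{\partial K}$, the constant-preserving property of $\mathscr{L}_K$ from (H1) then gives $v_h\in\mathcal{P}_0(K)$ on each element, and global continuity with the zero boundary condition concludes. If anything, your write-up is slightly more careful than the paper's terse version, since you correctly place the trace of $v_h-\Pi_K v_h$ in $\widetilde{\mathcal{B}}^0_h(\partial K)$ (using the zero-average normalization of $\Pi_K$), which is the space on which (H2) is actually stated, and you spell out the seminorm properties and the final patching argument that the paper leaves implicit.
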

\begin{proof}
\hyperref[asp:H1]{(H1)} implicitly implies $\mathcal{P}_0(K)\subset \mathcal{V}_h(K)$.
Given any $v_h\in\mathcal{V}_h$, assume $\vertiii{v_h}_h = 0$. Then, $\| \nabla \Pi_K v_h \|_{0,K}=0$ implies $\Pi_K v_h\in \mathcal{P}_0(K)$. As $(v_h - \Pi_K v_h)|_{\partial K} \in \mathcal{B}^0_h(\partial K)$, by \hyperref[asp:H2]{(H2)}, we have $v_h|_{\partial K} = \Pi_K v_h|_{\partial K}\in\mathcal{P}_0(\partial K)$. 
Using the preserving property of $\mathscr{L}_K$, we have $v_h\in\mathcal{P}_0(K)$ which finishes the proof by the continuity.
\end{proof}


\subsection{A unified analysis framework}

With the hypotheses above, we are able to show the optimal error estimates through a unified procedure. 
We point out that most of the techniques for the energy norm estimation have been established in \cite{Cao;Chen:2018AnisotropicNC}, and some details will be omitted here to avoid redundancy. 
But special attention must be paid to that we only use the hypotheses above without introducing any shape regularity assumptions.  
Meanwhile, the analysis for the $L^2$ norm in the literature \cite{2013BeiraodeVeigaBrezziCangiani,Brenner;Sung:2018Virtual} heavily relies on the estimate for $\|u-u_I\|_{0,K}$ 
which is not available in this work due to the anisotropic meshes. 
So the proposed $L^2$ analysis below is new.



Note that the equation $-\nabla\cdot(\beta \nabla u) \in L^2(\Omega)$ implicitly implies that $u \in H^1(\Omega)$ and $\beta \nabla u \in \bfH(\text{div};\Omega)$. In the forthcoming discussion, we consider the regularity assumption $u\in H^2_{0}(\beta;\Omega)$ where
\begin{equation}
  \label{beta_space_1}
H^2_{0}(\beta;\Omega) = \{ u\in H^1_0(\Omega) \cap H^2(\Omega), ~ \beta \nabla u \in \bfH(\ddiv;\Omega) \}. 
\end{equation}
Note that $H^2_{0}(\beta;\Omega) = H^1_0(\Omega) \cap H^2(\Omega)$ for smooth $\beta$. 
But, we here chose to keep $\beta$, as this space will be slightly modified to adapt to singular $\beta$ in Section \ref{sec:IVEM}, see \eqref{beta_space_2}. 
Nevertheless, for singular $\beta$, the results in this section are still applicable. 
We begin with the following lemma.
\begin{lemma}
\label{lem_est_partialK}
Let $u\in H^2_0(\beta;\Omega)$. Under Hypotheses \hyperref[asp:H1]{(H1)}, \hyperref[asp:H4]{(H4)} and \hyperref[asp:H6]{(H6)}, there holds
\begin{equation}
\label{lem_est_partialK_eq0}
\sum_{K\in\mathcal{T}_h} (\beta \nabla u\cdot \bfn, \Pi_K v)_{\partial K} \lesssim h^{1/2} \| u \|_{2,\Omega} \left( \sum_{K\in\mathcal{T}_h} \| v -\Pi_K v \|^2_{0,\partial K} \right)^{1/2} , ~~~~ \forall v\in H^1(\Omega).
\end{equation}
\end{lemma}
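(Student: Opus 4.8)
The plan is to exploit two structural facts. First, $\beta\nabla u\in\bfH(\ddiv;\Omega)$, so its normal component is single-valued across interior faces. Second, $\beta_h\nabla\mathcal{W}_h(K)$ is divergence-free, so the computable projected flux $\beta_h\nabla\Pi_K u$ can be paired against $v-\Pi_K v$ with no residual. The raw flux $\beta\nabla u\cdot\bfn$ has only an $\mathcal{O}(h_K^{-1/2})$ boundary trace and cannot be estimated directly; the whole point is to replace it by the small approximation error $(\beta\nabla u-\beta_h\nabla\Pi_K u)\cdot\bfn$, controlled by \hyperref[asp:H4]{(H4)} and \hyperref[asp:H6]{(H6)}, while converting $\Pi_K v$ into $v-\Pi_K v$.

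First I would write $\Pi_K v=v-(v-\Pi_K v)$ and split the sum. In the piece $\sum_{K}(\beta\nabla u\cdot\bfn,v)_{\partial K}$, continuity of the normal flux together with the single-valued trace of $v$ makes the interior-face contributions cancel in pairs, leaving only an integral over $\partial\Omega$; this is absent for the functions of interest, namely $v$ with zero trace (in particular every $v_h\in\mathcal{V}_h\subseteq H^1_0(\Omega)$ to which the estimate is applied). The left-hand side thus reduces to $-\sum_{K}(\beta\nabla u\cdot\bfn,\,v-\Pi_K v)_{\partial K}$.

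The key step is the orthogonality $(\beta_h\nabla\Pi_K u\cdot\bfn,\,v-\Pi_K v)_{\partial K}=0$ for each $K$. Integrating by parts and using $\ddiv(\beta_h\nabla\Pi_K u)=0$ (since $\Pi_K u\in\mathcal{W}_h(K)$ and $\beta_h\nabla\mathcal{W}_h(K)$ is divergence-free), this pairing equals $(\beta_h\nabla\Pi_K u,\nabla(v-\Pi_K v))_K$; testing the defining relation \eqref{Pi_proj} of $\Pi_K$ with $w_h=\Pi_K u\in\mathcal{W}_h(K)$ and invoking the symmetry of $\beta_h$ gives $(\beta_h\nabla\Pi_K u,\nabla\Pi_K v)_K=(\beta_h\nabla\Pi_K u,\nabla v)_K$, so the pairing vanishes. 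Subtracting this zero term replaces $\beta\nabla u\cdot\bfn$ by $(\beta\nabla u-\beta_h\nabla\Pi_K u)\cdot\bfn$, giving $-\sum_{K}\big((\beta\nabla u-\beta_h\nabla\Pi_K u)\cdot\bfn,\,v-\Pi_K v\big)_{\partial K}$.

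It then remains to apply Cauchy--Schwarz face by face and bound the flux difference via $(\beta\nabla u-\beta_h\nabla\Pi_K u)\cdot\bfn=(\beta-\beta_h)\nabla u\cdot\bfn+\beta_h\nabla(u-\Pi_K u)\cdot\bfn$, the first term by the boundary part of \hyperref[asp:H6]{(H6)} and the second by the boundary part of \hyperref[asp:H4]{(H4)} with $\|\beta_h\|_{\infty,\Omega}\le\|\beta\|_{\infty,\Omega}$. I expect the real difficulty to be twofold. Conceptually, spotting the orthogonality is the crux: a naive subtraction of one constant per element fails because $\bfn$ jumps between faces, so no single constant renders $\beta\nabla u\cdot\bfn$ small, whereas inserting the divergence-free computable flux does the job. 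Technically, the care is in extracting precisely the factor $h^{1/2}$, i.e.\ in bounding $\sum_{K}\|(\beta\nabla u-\beta_h\nabla\Pi_K u)\cdot\bfn\|_{0,\partial K}^2\lesssim h\|u\|_{2,\Omega}^2$ in the \emph{unweighted} boundary norm, which forces one to combine the element-wise approximation estimates behind \hyperref[asp:H4]{(H4)} and \hyperref[asp:H6]{(H6)} with a careful accounting of the powers of $h_K$.
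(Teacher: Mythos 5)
Your proposal matches the paper's proof essentially step for step: the same key orthogonality $(\beta_h\nabla\Pi_K u\cdot\bfn,\,v-\Pi_K v)_{\partial K}=(\beta_h\nabla\Pi_K u,\nabla(v-\Pi_K v))_K=0$ (via the divergence-free projected flux and the definition \eqref{Pi_proj}), the same insertion of the single-valued $v$ to exploit flux continuity, the same splitting into $\beta_h\nabla(u-\Pi_K u)\cdot\bfn$ and $(\beta-\beta_h)\nabla u\cdot\bfn$ handled by \hyperref[asp:H4]{(H4)} and \hyperref[asp:H6]{(H6)}, and Cauchy--Schwarz. You are in fact more careful than the paper on the two fine points you flag (the vanishing of the $\partial\Omega$ contribution, and the weighted-versus-unweighted accounting of $h_K$ in extracting the factor $h^{1/2}$), which the paper glosses over.
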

\begin{proof}
We first notice the following identity
\begin{equation}
\label{lem_est_partialK_eq1}
(\beta_h \nabla \Pi_K u\cdot\bfn, \Pi_K v - v)_{\partial K} = (\beta_h \nabla \Pi_K u , \nabla( \Pi_K v - v)  )_{K}  = 0.
\end{equation}
Then, as $v$ is continuous across faces, inserting $v$ and using \eqref{lem_est_partialK_eq1} yields
\begin{equation}
\begin{split}
\label{lem_est_partialK_eq2}
&(\beta \nabla u\cdot \mathbf{ n}, \Pi_K v - v)_{\partial K} = (\beta_h( \nabla u -  \nabla \Pi_K u)\cdot \mathbf{ n} , \Pi_K v - v)_{\partial K} \\
& + ( (\beta-\beta_h) \nabla u\cdot \mathbf{ n}  , \Pi_K v - v)_{\partial K} \\
& \lesssim \left( \| \beta_h ( \nabla u -  \nabla \Pi_K u)\cdot \mathbf{ n} \|_{0,\partial K} + \| (\beta-\beta_h) \nabla u\cdot \mathbf{ n} \|_{0,\partial K } \right) \| \Pi_K v_h - v_h \|_{0,\partial K}.
\end{split}
\end{equation}
The estimate of $\| \beta_h ( \nabla u -  \nabla \Pi_K u)\cdot \mathbf{ n} \|_{0,\partial K}$ is given by \eqref{Pi_approx_eq2} in Hypothesss \hyperref[asp:H4]{(H4)}, while the estimate of $ \| (\beta-\beta_h) \nabla u\cdot \mathbf{ n} \|_{0,\partial K }$ follows from Hypothesss \hyperref[asp:H6]{(H6)}.
\end{proof}


\subsubsection{The energy norm estimate}
We consider the error decomposition:
\begin{equation}
\label{err_decomp}
\xi_h = u - u_I, ~~~~ \text{and} ~~~~ \eta_h = u_I - u_h,
\end{equation}
where $u_h$ is the VEM solution corresponding to the exact solution $u$.


\begin{theorem}[The energy norm]
\label{lem_err_eqn}
Let $u\in H^2_0(\beta;\Omega)$. Under Hypotheses \hyperref[asp:H1]{(H1)}-\hyperref[asp:H4]{(H4)} and \hyperref[asp:H6]{(H6)}, there holds
\begin{equation}
\label{lem_err_eqn_eq0}
\begin{aligned}
\vertiii{ \eta_h}_h \lesssim h \| u \|_{2,\Omega}.
\end{aligned}
\end{equation}
\end{theorem}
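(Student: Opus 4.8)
The plan is to exploit that $\eta_h = u_I - u_h \in \mathcal{V}_h$ is itself a discrete function, so that $\vertiii{\cdot}_h$ is available as a genuine norm by Lemma \ref{lem_norm}, and to test the discrete scheme against $\eta_h$. Writing $\vertiii{\eta_h}_h^2 = a_h(\eta_h,\eta_h) = a_h(u_I,\eta_h) - a_h(u_h,\eta_h)$ and invoking the scheme \eqref{eq_VEM} for the second piece, I would arrive at $\vertiii{\eta_h}_h^2 = a_h(u_I,\eta_h) - \sum_{K}(f,\Pi_K\eta_h)_K$. The whole argument then reduces to bounding this right-hand side by $h\|u\|_{2,\Omega}\vertiii{\eta_h}_h$ and dividing through.

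The central ingredient is a consistency step. Since $f = -\ddiv(\beta\nabla u)$ with $\beta\nabla u\in \bfH(\ddiv;\Omega)$, elementwise integration by parts gives $(f,\Pi_K\eta_h)_K = (\beta\nabla u,\nabla\Pi_K\eta_h)_K - (\beta\nabla u\cdot\bfn,\Pi_K\eta_h)_{\partial K}$. The boundary contribution $\sum_K (\beta\nabla u\cdot\bfn,\Pi_K\eta_h)_{\partial K}$ is exactly the object controlled by Lemma \ref{lem_est_partialK} with $v=\eta_h$; combining that estimate with \hyperref[asp:H2]{(H2)}, which converts $\|\eta_h-\Pi_K\eta_h\|_{0,\partial K}$ into $h_K^{1/2}\|\eta_h-\Pi_K\eta_h\|_{S_K}\le h_K^{1/2}\vertiii{\eta_h}_K$, yields the desired $h\|u\|_{2,\Omega}\vertiii{\eta_h}_h$ bound for this piece.

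For the remaining volume part I would pair $(\beta\nabla u,\nabla\Pi_K\eta_h)_K$ against the projection (consistency) term $(\beta_h\nabla\Pi_K u_I,\nabla\Pi_K\eta_h)_K$ of $a_h(u_I,\eta_h)$. The key algebraic fact is the defining orthogonality of the projection \eqref{Pi_proj}: since $\Pi_K\eta_h\in\mathcal{W}_h(K)$, one has $(\beta_h\nabla\Pi_K v,\nabla\Pi_K\eta_h)_K = (\beta_h\nabla v,\nabla\Pi_K\eta_h)_K$ for any $v$, which lets me replace gradients of the non-computable functions by gradients of their projections before any Cauchy--Schwarz. After inserting $\pm\,\beta_h\nabla u$, the volume difference splits into a term measuring $u-u_I$ in projected form, bounded by \hyperref[asp:H3]{(H3)} via $\|\beta_h^{1/2}\nabla\Pi_K(u-u_I)\|_{0,K}\le\vertiii{u-u_I}_K$, and a term measuring $\beta-\beta_h$, bounded by the first part of \hyperref[asp:H6]{(H6)}. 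The stabilization piece $\sum_K S_K(u_I-\Pi_K u_I,\eta_h-\Pi_K\eta_h)$ I would handle by Cauchy--Schwarz in $S_K$ together with the splitting $u_I-\Pi_K u_I = \big((u_I-u)-\Pi_K(u_I-u)\big) + (u-\Pi_K u)$, so that the two resulting seminorms are controlled by \hyperref[asp:H3]{(H3)} and by \eqref{Pi_approx_eq2} in \hyperref[asp:H4]{(H4)}, while $\|\eta_h-\Pi_K\eta_h\|_{S_K}\le\vertiii{\eta_h}_K$.

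I expect the main obstacle to be the bookkeeping of the $\beta$ versus $\beta_h$ mismatch: every occurrence of the exact flux $\beta\nabla u$ coming from the PDE must be routed either into the projection orthogonality (which only sees $\beta_h$) or into the approximation hypothesis \hyperref[asp:H6]{(H6)}, and one must ensure that no uncontrolled quantity such as $\nabla(u-u_I)$ survives in unprojected form—this is precisely why the orthogonality of $\Pi_K$ is applied first. Granting the uniform two-sided bounds on $\beta_h$ so that $\|\nabla\Pi_K\eta_h\|_{0,K}\lesssim\vertiii{\eta_h}_K$, summing the three contributions and applying a global Cauchy--Schwarz gives $\vertiii{\eta_h}_h^2\lesssim h\|u\|_{2,\Omega}\vertiii{\eta_h}_h$, and dividing by $\vertiii{\eta_h}_h$ closes the argument.
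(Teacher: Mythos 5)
Your proposal is correct and follows essentially the same route as the paper's proof: the error equation obtained from the scheme \eqref{eq_VEM} and elementwise integration by parts of $-\nabla\cdot(\beta\nabla u)=f$, Lemma \ref{lem_est_partialK} combined with \hyperref[asp:H2]{(H2)} for the boundary term, and Cauchy--Schwarz with the splitting $u_I-\Pi_K u_I=(u-\Pi_K u)+\bigl((u_I-u)-\Pi_K(u_I-u)\bigr)$, controlled by \eqref{Pi_approx_eq2} and \hyperref[asp:H3]{(H3)}, for the stabilization term. The only real difference is the volume term: the paper splits it into three pieces and estimates $(\beta_h(\nabla u-\nabla\Pi_K u),\nabla\Pi_K v_h)_K$ via \hyperref[asp:H4]{(H4)}, whereas you apply the orthogonality \eqref{Pi_proj} (legitimately, since $\Pi_K\eta_h\in\mathcal{W}_h(K)$), under which that piece vanishes identically, so your volume estimate needs only \hyperref[asp:H3]{(H3)} and \hyperref[asp:H6]{(H6)} --- a slightly cleaner bookkeeping that arrives at the same bound.
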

\begin{proof}
Applying integration by parts to the equation $-\nabla \cdot (\beta \nabla u) = f$ in $\Omega$, we obtain
\begin{equation}
\label{lem_err_eqn_1}
\begin{aligned}
 & a_h(\eta_h,v_h) = a_h(u_h-u_I, v_h) = a_h(u_h, v_h)-a_h(u_I,v_h) \\
=& \sum_{K\in \mathcal{T}_h}[ \underbrace{(\beta \nabla u, \nabla \Pi_K v_h)_K - (\beta_h \nabla \Pi_K u_I, \Pi_K u_h) }_{(I)} - \underbrace{ (\beta \nabla u\cdot \mathbf{ n}, \Pi_K v_h)_{\partial K} }_{(II)}] \\
& -\underbrace{ S_K(u_I-\Pi_K u_I, v_h-\Pi_K v_h) }_{(III)}.
\end{aligned}
\end{equation}
For $(I)$ in \eqref{lem_err_eqn_1}, we have
\begin{equation*}
\begin{split}
\label{lem_err_eqn_2}
(I) 
=  (\beta_h (\nabla u - \nabla  \Pi_K u), \nabla \Pi_K v_h)_K  + ((\beta-\beta_h) \nabla u, \nabla \Pi_K v_h)_K + (\beta_h \nabla \Pi_K (u-u_I), \nabla \Pi_K v_h)_K,
\end{split}
\end{equation*}
of which the estimates follow from Hypotheses \hyperref[asp:H4]{(H4)}, \hyperref[asp:H6]{(H6)} and \hyperref[asp:H3]{(H3)}, respectively.
For $(II)$, by Lemma \ref{lem_est_partialK}, we only need to estimate $\| v_h-\Pi_K v_h \|_{0, \partial K}$. Hypotheses \hyperref[asp:H2]{(H2)} immediately yields $\| v_h-\Pi_K v_h \|_{0,\partial K} \lesssim h^{1/2}_K  \vertiii{v_h}_h$.
As for $(III)$, we note that $(III) \le \| u_I - \Pi_K u_I \|_{S_K} \| v_h - \Pi_K v_h \|_{S_K}$,
and thus it remains to estimate $\| u_I - \Pi_K u_I \|_{S_K}$. With the triangular inequality, we have
\begin{equation}
\label{lem_err_eqn_9}
\| u_I - \Pi_K u_I \|_{S_K} \le \| u - \Pi_K u \|_{S_K} + \| u - u_I - \Pi_K (u-u_I) \|_{S_K} 
\end{equation}
of which the estimates follow from \eqref{Pi_approx_eq2} and \eqref{approxi_VK_Pi} , respectively. 
\end{proof}

Now, we present the following theorem.
\begin{theorem}
\label{thm_energy_est}
Let $u\in H^2_0(\beta;\Omega)$. Under Hypotheses \hyperref[asp:H1]{(H1)}-\hyperref[asp:H4]{(H4)} and \hyperref[asp:H6]{(H6)}, there holds
\begin{equation}
\label{thm_energy_est_eq0}
\vertiii{u-u_h}_h \lesssim h \| u \|_{2,\Omega}.
\end{equation}
\end{theorem}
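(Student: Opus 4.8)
The plan is to split the total error through the interpolant $u_I$ and to control the two pieces separately, so that the theorem becomes a short corollary of the results already in hand. Write $u - u_h = (u - u_I) + (u_I - u_h) = \xi_h + \eta_h$ as in \eqref{err_decomp}. I would first observe that $\vertiii{\cdot}_h$, although introduced in \eqref{energy_norm} via $a_h$, in fact makes sense on all of $H^1_0(\Omega)$: the local form $a_K(\cdot,\cdot)$ in \eqref{vem_scheme} involves only the computable ingredients $\nabla\Pi_K$ and the stabilization $S_K$, both of which act on $H^1(K)$, and $a_h$ is symmetric positive semidefinite. Hence $\vertiii{\cdot}_h$ is (at least) a seminorm and the triangle inequality applies, giving
\[
\vertiii{u - u_h}_h \le \vertiii{u - u_I}_h + \vertiii{u_I - u_h}_h .
\]

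For the interpolation term I would note that, by definition of the global energy norm,
\[
\vertiii{u - u_I}_h^2 = \sum_{K\in\mathcal{T}_h} \vertiii{u - u_I}_K^2 ,
\]
which is precisely the quantity bounded in Hypothesis \hyperref[asp:H3]{(H3)}; thus $\vertiii{u - u_I}_h \lesssim h\|u\|_{2,\Omega}$ with no additional work. For the remaining term I would simply invoke Theorem \ref{lem_err_eqn}, which already furnishes $\vertiii{\eta_h}_h = \vertiii{u_I - u_h}_h \lesssim h\|u\|_{2,\Omega}$. Substituting both bounds into the triangle inequality yields \eqref{thm_energy_est_eq0}.

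In this sense the genuine difficulty is entirely front-loaded into the two inputs, not into the assembly. The main obstacle is the consistency estimate $\vertiii{\eta_h}_h$ of Theorem \ref{lem_err_eqn}: there one exploits the scheme's Galerkin-type consistency, integrates by parts against $\beta\nabla u$, and controls the terms $(I)$--$(III)$ in \eqref{lem_err_eqn_1} using the approximation hypotheses \hyperref[asp:H3]{(H3)}--\hyperref[asp:H4]{(H4)}, the boundary bound of Lemma \ref{lem_est_partialK}, and—crucially—the stability estimate \hyperref[asp:H2]{(H2)}, which converts $\|v_h - \Pi_K v_h\|_{0,\partial K}$ into $h_K^{1/2}\vertiii{v_h}_h$ without recourse to shape regularity. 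The secondary obstacle, pushed entirely into the hypotheses, is verifying \hyperref[asp:H3]{(H3)} for each concrete anisotropic VEM in the later sections. Once those are granted, the present theorem follows by the triangle inequality alone, and I would not expect any further subtlety at this stage.
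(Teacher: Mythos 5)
Your proposal is correct and follows exactly the paper's own argument: the paper likewise decomposes $u-u_h=\xi_h+\eta_h$ as in \eqref{err_decomp}, bounds $\vertiii{\xi_h}_h$ directly by \eqref{approxi_VK_Pi} in Hypothesis \hyperref[asp:H3]{(H3)}, and bounds $\vertiii{\eta_h}_h$ by Theorem \ref{lem_err_eqn}. Your extra remark that $\vertiii{\cdot}_h$ is a well-defined seminorm on $H^1_0(\Omega)$, so the triangle inequality is legitimate, is a point the paper leaves implicit but is the same proof.
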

\begin{proof}
The estimates of $\vertiii{\xi_h}_h$ and $\vertiii{\eta_h}_h$ follow from \eqref{approxi_VK_Pi} and Theorem \ref{lem_err_eqn}, respectively.
\end{proof}


\subsubsection{The $L^2$ norm estimate}
The $L^2$ norm estimation under anisotropic elements is more difficult. 
We begin with following two corollaries from the energy norm estimate.

\begin{corollary}
\label{cor_uI}
Let $u\in H^2_0(\beta;\Omega)$. Under Hypotheses \hyperref[asp:H1]{(H1)}-\hyperref[asp:H4]{(H4)} and \hyperref[asp:H6]{(H6)}, there holds
\begin{equation}
\label{cor_uI_eq0}
\sum_{K\in\mathcal{T}_h} h_K \| u_I - \Pi_K u_I \|^2_{0,\partial K} \lesssim \sum_{K\in\mathcal{T}_h} \| u_I - \Pi_K u_I \|^2_{S_K} \lesssim h^2 \| u \|_{2,\Omega} .
\end{equation}
\end{corollary}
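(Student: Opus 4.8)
The plan is to prove Corollary \ref{cor_uI} by combining the stability hypothesis \hyperref[asp:H2]{(H2)} with the energy-norm estimate already established in Theorem \ref{lem_err_eqn} and the approximation hypotheses \hyperref[asp:H3]{(H3)}, \hyperref[asp:H4]{(H4)}. The statement chains two inequalities, so I would attack them separately. The first inequality, $\sum_K h_K \| u_I - \Pi_K u_I \|^2_{0,\partial K} \lesssim \sum_K \| u_I - \Pi_K u_I \|^2_{S_K}$, should follow directly from \hyperref[asp:H2]{(H2)}: the bound \eqref{SK_equiv_1} gives $\| \cdot \|_{0,\partial K} \lesssim h_K^{1/2} \| \cdot \|_{S_K}$ on the space $\widetilde{\mathcal{B}}^0_h(\partial K)$ of boundary functions with zero average. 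The only thing to verify is that $u_I - \Pi_K u_I$ (restricted to $\partial K$) actually lies in this zero-average space; this is exactly the uniqueness normalization $\int_{\partial K} \Pi_K v_h \dd s = \int_{\partial K} v_h \dd s$ built into the definition \eqref{Pi_proj}, which forces $(u_I - \Pi_K u_I, 1)_{\partial K} = 0$. Squaring and summing over $K$ then yields the first inequality.

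For the second inequality, $\sum_K \| u_I - \Pi_K u_I \|^2_{S_K} \lesssim h^2 \| u \|_{2,\Omega}$, I would reuse the triangle-inequality decomposition that already appears inside the proof of Theorem \ref{lem_err_eqn}, namely \eqref{lem_err_eqn_9}:
\begin{equation*}
\| u_I - \Pi_K u_I \|_{S_K} \le \| u - \Pi_K u \|_{S_K} + \| (u - u_I) - \Pi_K (u - u_I) \|_{S_K}.
\end{equation*}
The first term on the right is controlled by \eqref{Pi_approx_eq2} in \hyperref[asp:H4]{(H4)}. For the second term I would bound the stabilization seminorm of $(u-u_I) - \Pi_K(u-u_I)$ by the energy norm $\vertiii{u - u_I}_K$; indeed, by the definition \eqref{vem_scheme} of $a_K$ the term $S_K(w - \Pi_K w, w - \Pi_K w)$ is one of the two nonnegative summands in $\vertiii{w}_K^2$, so $\| w - \Pi_K w \|_{S_K} \le \vertiii{w}_K$ for any admissible $w$, and in particular for $w = u - u_I$. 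Summing over $K$ and invoking \hyperref[asp:H3]{(H3)} (equation \eqref{approxi_VK_Pi}) gives $\sum_K \vertiii{u - u_I}_K^2 \lesssim h^2 \| u \|_{2,\Omega}^2$. Combining the two pieces via $(a+b)^2 \le 2a^2 + 2b^2$ closes the second inequality.

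The step I expect to require the most care is making the application of \hyperref[asp:H2]{(H2)} fully rigorous, because \eqref{SK_equiv_1} is stated on the restricted space $\widetilde{\mathcal{B}}^0_h(\partial K)$ that mixes $\mathcal{B}_h(\partial K)$ with the trace of $\mathcal{W}_h(K)$; I must confirm that both $u_I|_{\partial K}$ (a genuine element of $\mathcal{B}_h(\partial K)$) and $\Pi_K u_I|_{\partial K}$ (a trace of something in $\mathcal{W}_h(K)$) individually lie in the admissible space, so that their zero-average difference qualifies for the bound. A secondary subtlety is that $\| \cdot \|_{S_K}$ is only a seminorm in general, so I should check that the inequality $\| w - \Pi_K w \|_{S_K} \le \vertiii{w}_K$ is used purely as an upper bound and does not require $S_K$ to be a full norm on the relevant argument. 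Neither of these is a deep obstacle, and no new geometric or anisotropic input is needed here: the corollary is essentially a bookkeeping consequence of the energy estimate, with the real anisotropic work having already been absorbed into verifying \hyperref[asp:H2]{(H2)}--\hyperref[asp:H4]{(H4)} elsewhere in the paper.
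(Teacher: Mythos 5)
Your proposal is correct and takes essentially the same route as the paper: the left inequality is obtained directly from \hyperref[asp:H2]{(H2)} (the zero-average property you verify is indeed the normalization built into \eqref{Pi_proj}), and the right one uses the identical triangle-inequality split $\| u_I - \Pi_K u_I\|_{S_K} \le \| (u - u_I) - \Pi_K (u - u_I) \|_{S_K} + \| u - \Pi_K u \|_{S_K}$ with the two terms controlled by the approximation hypotheses. The only cosmetic difference is in citation: the paper's proof refers to Theorem \ref{thm_energy_est} for the first term, whereas you bound it by $\vertiii{u-u_I}_K$ and invoke \eqref{approxi_VK_Pi} directly, which is the cleaner (and arguably the intended) reference, matching how the same decomposition is handled in \eqref{lem_err_eqn_9}.
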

\begin{proof}
For \eqref{cor_uI_eq0}, the left inequality directly follows from \eqref{SK_equiv_1}. For the right one, notice that 
\begin{equation}
\begin{split}
\label{cor_uI_eq1}
\| u_I - \Pi_K u_I\|_{S_K} & \le \| (u - u_I) - \Pi_K (u - u_I) \|_{S_K} + \| u - \Pi_K u \|_{S_K} 
\end{split}
\end{equation}
of which the estimates follows from Theorem \ref{thm_energy_est} and \eqref{Pi_approx_eq2}.
\end{proof}

\begin{corollary}
\label{cor_energy}
Let $u\in H^2_0(\beta;\Omega)$. Under Hypotheses \hyperref[asp:H1]{(H1)}-\hyperref[asp:H4]{(H4)} and \hyperref[asp:H6]{(H6)}, there holds
\begin{subequations}
\begin{align}
& \sum_{K\in\mathcal{T}_h} \| \nabla( u - \Pi_K u_h ) \|_{0,K} \lesssim h \| u \|_{2,\Omega},  \label{cor_energy_eq1}\\
 &  \sum_{K\in\mathcal{T}_h}\| u_h - \Pi_K u_h \|_{S_K} \lesssim h \| u \|_{2,\Omega}.  \label{cor_energy_eq0}
\end{align}
\end{subequations}
\end{corollary}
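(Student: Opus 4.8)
The plan is to derive both bounds directly from the energy estimate of Theorem~\ref{thm_energy_est}, namely $\vertiii{u-u_h}_h \lesssim h\|u\|_{2,\Omega}$, by splitting each quantity with the triangle inequality into a piece governed by the approximation Hypothesis \hyperref[asp:H4]{(H4)} and a piece that is literally one of the two terms making up $\vertiii{u-u_h}_K^2$. Throughout I read the left-hand sums in the $\ell^2$ sense, so that it suffices to show $\sum_{K\in\mathcal{T}_h}\|\cdot\|^2 \lesssim h^2\|u\|^2_{2,\Omega}$. The key algebraic facts I will use repeatedly are the linearity of $\Pi_K$, which gives $\Pi_K u - \Pi_K u_h = \Pi_K(u-u_h)$, and the observation that $u-u_h \in H^1_0(\Omega)$, so that by \eqref{energy_norm} and \eqref{vem_scheme} the quantity $\vertiii{u-u_h}_K^2$ is well defined and equals $(\beta_h\nabla\Pi_K(u-u_h),\nabla\Pi_K(u-u_h))_K + S_K\big((u-u_h)-\Pi_K(u-u_h),\,(u-u_h)-\Pi_K(u-u_h)\big)$.

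For \eqref{cor_energy_eq1} I would write $u-\Pi_K u_h = (u-\Pi_K u) + \Pi_K(u-u_h)$ and apply the triangle inequality. After squaring and summing, the term $\|\nabla(u-\Pi_K u)\|_{0,K}$ is controlled by \eqref{Pi_approx_eq1}. For the term $\|\nabla\Pi_K(u-u_h)\|_{0,K}$ I would invoke the uniform positive lower bound of $\beta_h$ (part of the standing ellipticity assumption on $\beta$ in \eqref{model}, complementing the upper bound recorded in \hyperref[asp:H6]{(H6)}) to obtain $\|\nabla\Pi_K(u-u_h)\|^2_{0,K} \lesssim (\beta_h\nabla\Pi_K(u-u_h),\nabla\Pi_K(u-u_h))_K \le \vertiii{u-u_h}^2_K$; summing over $K$ and using Theorem~\ref{thm_energy_est} then closes this piece.

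For \eqref{cor_energy_eq0} I would use the exact identity $u_h-\Pi_K u_h = (u-\Pi_K u) - \big[(u-u_h)-\Pi_K(u-u_h)\big]$, which follows by cancellation, and apply the triangle inequality for the seminorm $\|\cdot\|_{S_K}$ (valid since $S_K$ is semi-positive symmetric). After squaring and summing, the first term $\|u-\Pi_K u\|_{S_K}$ is controlled by \eqref{Pi_approx_eq2}. The second term is exactly the stabilization part of the energy norm, so $\|(u-u_h)-\Pi_K(u-u_h)\|_{S_K}^2 \le \vertiii{u-u_h}^2_K$, and summing over $K$ and again using Theorem~\ref{thm_energy_est} finishes the bound.

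I do not anticipate a serious obstacle, as the corollary is essentially a repackaging of the energy estimate; the only point that genuinely requires care is the passage from the $\beta_h$-weighted gradient term to the unweighted $L^2$ gradient norm in \eqref{cor_energy_eq1}, which relies on the coercivity (positive lower bound) of $\beta_h$ rather than merely its boundedness. A minor bookkeeping remark is that one only needs the seminorm triangle inequality for $\|\cdot\|_{S_K}$, so there is no need to check that the intermediate arguments lie in $\widetilde{\mathcal{B}}^0_h(\partial K)$ where \hyperref[asp:H2]{(H2)} upgrades it to a norm.
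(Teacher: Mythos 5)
Your proposal is correct and follows essentially the same route as the paper: both bounds come from the decompositions $u-\Pi_K u_h = (u-\Pi_K u)+\Pi_K(u-u_h)$ and $u_h-\Pi_K u_h = (u-\Pi_K u)-\bigl[(u-u_h)-\Pi_K(u-u_h)\bigr]$, with the first pieces handled by Hypothesis \hyperref[asp:H4]{(H4)} and the second pieces absorbed into $\vertiii{u-u_h}_h$ via Theorem~\ref{thm_energy_est}, exactly as in the paper's (terser) argument. Your only addition is to make explicit the uniform positive lower bound on $\beta_h$ needed to pass from the $\beta_h$-weighted projection term to the unweighted gradient norm in \eqref{cor_energy_eq1}, a point the paper leaves implicit in its ellipticity assumptions.
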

\begin{proof}
\eqref{cor_energy_eq1} follows from inserting $u$ into $\| \nabla\Pi_K(u-u_h)\|_{0,K}$. The estimate for \eqref{cor_energy_eq0} is the same as Corollary \ref{cor_uI}
\end{proof}

We are ready to estimate the solution errors under the $L^2$ norm.
\begin{theorem}
\label{thm_u}
Let $u\in H^2_0(\beta;\Omega)$. Under Hypotheses \hyperref[asp:H1]{(H1)}-\hyperref[asp:H6]{(H6)}, there holds
\begin{equation}
\label{thm_u_eq0}
\| u - \Pi u_h \|_{0,\Omega} \lesssim h^2 \| u \|_{2,\Omega}.
\end{equation}
\end{theorem}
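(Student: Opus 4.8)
The plan is to run a duality (Aubin–Nitsche) argument adapted to the virtual/anisotropic setting. Let $e = u - \Pi u_h$ and introduce the auxiliary problem $-\nabla\cdot(\beta\nabla z) = e$ in $\Omega$ with $z\in H^1_0(\Omega)$; by elliptic regularity we have $z\in H^2_0(\beta;\Omega)$ with $\|z\|_{2,\Omega}\lesssim \|e\|_{0,\Omega}$. The first step is to write $\|e\|^2_{0,\Omega} = (e, -\nabla\cdot(\beta\nabla z))_\Omega$ and integrate by parts elementwise so that $a(\cdot,\cdot)$ and the boundary flux terms $(\beta\nabla z\cdot\bfn,\,e)_{\partial K}$ appear. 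The goal is to replace the exact solution $u$ and the VEM solution $u_h$ strategically so that the continuous bilinear form $a(u,z)$ and the discrete form $a_h(u_h,z_I)$ can be subtracted, producing a consistency error plus interpolation/projection residuals.

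\medskip

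The \textbf{core decomposition} I would pursue splits $\|e\|_{0,\Omega}^2$ into three kinds of terms. First, a \emph{volume consistency term} of the form $\sum_K (\beta\nabla u - \beta_h\nabla\Pi_K u_h,\,\nabla(z-\Pi_K z))_K$, which I would bound by combining \hyperref[asp:H6]{(H6)}, \hyperref[asp:H4]{(H4)} (specifically \eqref{Pi_approx_eq1}) and the energy-norm control \eqref{cor_energy_eq1} from Corollary \ref{cor_energy}, each contributing an $O(h)$ factor in $u$ and an $O(h)$ factor in $z$, hence $O(h^2)\|u\|_{2,\Omega}\|z\|_{2,\Omega}$. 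Second, a \emph{boundary flux term} $\sum_K(\beta\nabla z\cdot\bfn,\,\Pi_K w)_{\partial K}$ (and its $u$-counterpart), to which I would apply Lemma \ref{lem_est_partialK} directly: it yields an $h^{1/2}\|z\|_{2,\Omega}$ prefactor times $(\sum_K\|w - \Pi_K w\|^2_{0,\partial K})^{1/2}$, and the latter quantity is controlled through \hyperref[asp:H2]{(H2)}, Corollary \ref{cor_uI} and \eqref{L2_assump} in \hyperref[asp:H5]{(H5)} to supply the complementary $h^{3/2}\|u\|_{2,\Omega}$. Third, a \emph{stabilization consistency term} $\sum_K S_K(u_h - \Pi_K u_h,\,z_I - \Pi_K z_I)$, bounded by Cauchy–Schwarz in $\|\cdot\|_{S_K}$ using \eqref{cor_energy_eq0} for the $u_h$ factor and Corollary \ref{cor_uI} (applied to $z$) for the $z_I$ factor, again giving $O(h)\cdot O(h)$.

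\medskip

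The \textbf{main obstacle} is the appearance of the term $h_K\|u - u_I\|^2_{0,\partial K}$, which classical Aubin–Nitsche arguments would absorb via a standard $L^2$-interpolation estimate $\|u-u_I\|_{0,K}\lesssim h_K^2\|u\|_{2,K}$; but as the excerpt stresses, this elementwise $L^2$ estimate is \emph{not available} on the anisotropic meshes. This is exactly why \hyperref[asp:H5]{(H5)} is postulated: it replaces the missing interior bound by the boundary quantity $h_K\|u-u_I\|^2_{0,\partial K}$ together with $\|u-\Pi_K u\|^2_{0,K}$, both summing to $O(h^4)\|u\|^2_{2,\Omega}$. The delicate point is therefore to arrange the duality algebra so that every residual contracts against one of these available quantities rather than against $\|u-u_I\|_{0,K}$ — in particular, I would carefully insert $\Pi_K z$ and $z_I$ so the interpolation error of $z$ enters only through its boundary trace and its projection error, matching precisely the forms controlled by Corollary \ref{cor_uI} and \eqref{L2_assump}. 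Once each of the three groups is shown to be $O(h^2)\|u\|_{2,\Omega}\|z\|_{2,\Omega}$, substituting $\|z\|_{2,\Omega}\lesssim\|e\|_{0,\Omega}$ and cancelling one factor of $\|e\|_{0,\Omega}$ yields \eqref{thm_u_eq0}.
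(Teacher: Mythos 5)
Your proposal follows essentially the same route as the paper's proof: a duality argument with an auxiliary solution $z$, elementwise integration by parts, the Galerkin-type subtraction of $a(u,z)$ against $a_h(u_h,z_I)$ via the scheme \eqref{eq_VEM}, Lemma \ref{lem_est_partialK} for the boundary flux terms, and Corollaries \ref{cor_uI} and \ref{cor_energy} together with Hypothesis (H5) deployed exactly where you place them. The only cosmetic difference is that the paper poses the dual problem with data $\Pi(u-u_h)$ and recovers $\|u-\Pi u_h\|_{0,\Omega}$ by a final triangle inequality (using (H5) for $\|u-\Pi_K u\|_{0,K}$), whereas you use $u-\Pi u_h$ directly; the two are equivalent.
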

\begin{proof}
Let $z\in H^2_0(\beta;\Omega)$ be the solution to $-\nabla\cdot(\beta\nabla z) = \Pi(u-u_h)$. Testing this equation by $\Pi(u-u_h)$ and applying integration by parts, we have
\begin{equation}
\label{thm_u_eq1}
\| \Pi(u-u_h) \|^2_{0,\Omega} = \sum_{K\in\mathcal{T}_h} \underbrace{ ( \beta \nabla z, \nabla\Pi_K(u-u_h) )_K }_{(I)} - \underbrace{ ( \beta \nabla z\cdot\bfn,  \Pi_K(u-u_h)  )_{\partial K} }_{(II)}.
\end{equation}
For $(I)$, we note $( \beta_h \nabla \Pi_K z , \nabla (u-\Pi_K u_h) )_{ K} = ( \beta_h \nabla z , \nabla \Pi_K(u- u_h) )_{ K}$ by the projection property. Then, we have
\begin{equation}
\begin{split}
\label{thm_u_eq2}
(I) 
&= \underbrace{ ( (\beta- \beta_h) \nabla z, \nabla\Pi_K(u-u_h) )_K }_{(Ia)} + \underbrace{ ( \beta_h \nabla ( \Pi_K z - z ), \nabla (u-\Pi_K u_h) )_{ K} }_{(Ib)} \\
& + \underbrace{ ( \beta_h \nabla   z, \nabla (u-\Pi_K  u_h) )_{ K} }_{(Ic)}.
\end{split}
\end{equation}
$(Ia)$ follow from Hypothesis \hyperref[asp:H6]{(H6)}, and $(Ib)$ follows from \eqref{cor_energy_eq1} and \eqref{Pi_approx_eq1} with $\|\beta_h\|_{\infty}\le \|\beta \|_{\infty}$ in Hypothesis \hyperref[asp:H6]{(H6)}.
As for $(Ic)$, by the projection property, we have
\begin{equation}
\label{thm_u_eq5}
(Ic) = ( (\beta_h - \beta) \nabla z, \nabla u )_{ K} + \underbrace{ ( \beta \nabla z, \nabla u )_{ K} }_{\dagger_1} - \underbrace{ ( \beta_h \nabla \Pi_K z, \nabla \Pi_K u_h )_{ K}  }_{\dagger_2},
\end{equation}
where the estimate of the first term still follows from Hypothesis \hyperref[asp:H6]{(H6)}. We proceed to estimate the remaining two terms. 
We notice the following identity by inserting $ \nabla \Pi_Kz_I$ and $ \nabla \Pi_K u_h$:
\begin{equation}
\begin{split}
\label{thm_u_eq6_1}
\dagger_2 &= ( \beta_h \nabla \Pi_K u_h, \nabla \Pi_K z_I )_{ K} +  ( \beta_h (\nabla \Pi_K u_h - \nabla u)), \nabla \Pi_K(z- z_I) )_{ K} \\
& + ( \beta \nabla  u, \nabla \Pi_K(z- z_I) )_{ K}  + ( (\beta_h - \beta) \nabla  u, \nabla \Pi_K(z- z_I) )_{ K}.
 \end{split}
\end{equation}
Then, by the projection property again, applying integration by parts to $\dagger_1$ and $(\beta \nabla u, \nabla \Pi_K( z-z_I) )$, and using the scheme \eqref{eq_VEM} for $( \beta_h \nabla \Pi_K u_h, \nabla \Pi_K z_I )_{ K}$, we arrive at
\begin{equation*}
\begin{split}
\label{thm_u_eq6_2}
\dagger_1 - \dagger_2& = (f, z)_K - (f,\Pi_Kz_I) + S_K(u_h - \Pi_Ku_h, z_I - \Pi_Kz_I) -  ( \beta_h \nabla \Pi_K (u_h-u), \nabla \Pi_K(z- z_I) )_{ K}  \\ 
&- ( f,  \Pi_K(z- z_I) )_{ K} - (\beta \nabla u\cdot\bfn, \Pi_K(z-z_I))_{\partial K}  - ( (\beta_h-\beta) \nabla u, \nabla \Pi_K(z- z_I) )_{ K}  \\
& = (f, z - \Pi_K z)_K + S_K(u_h - \Pi_Ku_h, z_I - \Pi_Kz_I)  -  ( \beta_h \nabla \Pi_K (u_h-u), \nabla \Pi_K(z- z_I) )_{ K} \\
& -  (\beta \nabla u\cdot\bfn, \Pi_K(z-z_I))_{\partial K}  - ( (\beta_h-\beta) \nabla u, \nabla \Pi_K(z- z_I) )_{ K}  
\end{split}
\end{equation*}
where the terms in the right-hand side after being summed over all the elements are denoted as $\ddag_1$,..., $\ddag_5$, respectively. The following estimates are immediate:
\begin{subequations}
\begin{align*}
\ddag_1 &\lesssim h^2 \| f \|_{0,\Omega} \| z \|_{2,\Omega}, &\text{(by \eqref{L2_assump})} \\
 \ddag_2 &\le \sum_{K\in\mathcal{T}_h} \| u_h - \Pi_K u_h \|_{S_K}  \| z_I - \Pi_K z_I \|_{S_K}   \lesssim h^2 \| u \|_{2,\Omega} \| z \|_{2,\Omega}, &\text{(by \eqref{cor_energy_eq0} and \eqref{cor_uI_eq0})} \nonumber \\
 \ddag_3 & \lesssim  \sum_{K\in\mathcal{T}_h}  \| \nabla \Pi_K(u-u_h) \|_{0,K}  \| \nabla \Pi_K(z- z_I) \|_{0,K}  \lesssim h^{2} \| u \|_{2,\Omega} \| z \|_{2,\Omega}, & \text{(by \eqref{thm_energy_est_eq0} and \eqref{approxi_VK_Pi})}  \nonumber \\
 \ddag_4 & \lesssim h^{1/2} \| u \|_{2,\Omega} \left( \sum_{K\in\mathcal{T}_h} \| (z - z_I) - \Pi_K (z-z_I) \|^2_{0, \partial K}  \right)^{1/2} \lesssim h^{2} \| u \|_{2,\Omega} \| z \|_{2,\Omega}, & \text{(by \eqref{L2_assump} and \eqref{cor_uI_eq0})} \nonumber \\
 \ddag_5 & \lesssim h \|  u \|_{2, \Omega} \left( \sum_{K\in\mathcal{T}_h} \| \nabla \Pi_K(z- z_I) \|^2_{0,K} \right)^{1/2} \lesssim h^{2} \| u \|_{2,\Omega} \| z \|_{2,\Omega} . & \text{(by \eqref{approxi_VK_Pi})} 
\end{align*}
\end{subequations}
Putting these estimates into \eqref{thm_u_eq5} leads to the estimate for $(Ic)$, which is combined with $(Ia)$ and $(Ib)$ to conclude the estimate for $(I)$. Next, the estimate of $(II)$ is similar to $\ddag_3$ above. $(I)$ and $(II)$ lead to the estimate of $\Pi(u-u_h)$ by the elliptic regularity $\|z \|_{2,\Omega}\lesssim \| \Pi(u-u_h) \|_{0,\Omega}$. It finishes the proof by applying triangular inequality to $u - \Pi_K u_h = (u - \Pi_Ku) + (\Pi_Ku_h - \Pi_K u)$.
\end{proof}

\section{The boundary space}
\label{sec:boundary}

One feature of the proposed method is a boundary triangulation that enables us to overcome the difficulty arising from anisotropic element shapes. We first make the following assumption:
\begin{itemize}
  \item[(\textbf{A1})] \label{asp:A1} For each element $K$, the number of edges and faces is uniformly bounded. Each of its face admits a triangulation satisfying the 2D maximum angle condition in which the edges are connected by vertices only in $\mathcal{E}_K$.
\end{itemize}  

We shall denote $\mathcal{T}_h(\partial K)$ by the surface mesh, let the collection of all the vertices and edges be $\mathcal{N}_h(\partial K)$ and $\mathcal{E}_h(\partial K)$, respectively.
With Assumption \hyperref[asp:A1]{(A1)}, we define
\begin{equation}
\label{BhK_fitted}
\mathcal{B}_h(\partial K) = \{ v_h \in L^2(\partial K)~:~ v_h|_T \in \mathcal{P}_1(T), ~ \forall T\in \mathcal{T}_h(\partial K) \}.
\end{equation}


\begin{figure}[h]
  \centering
  \begin{minipage}{.4\textwidth}
  \centering
  \includegraphics[width=2.3in]{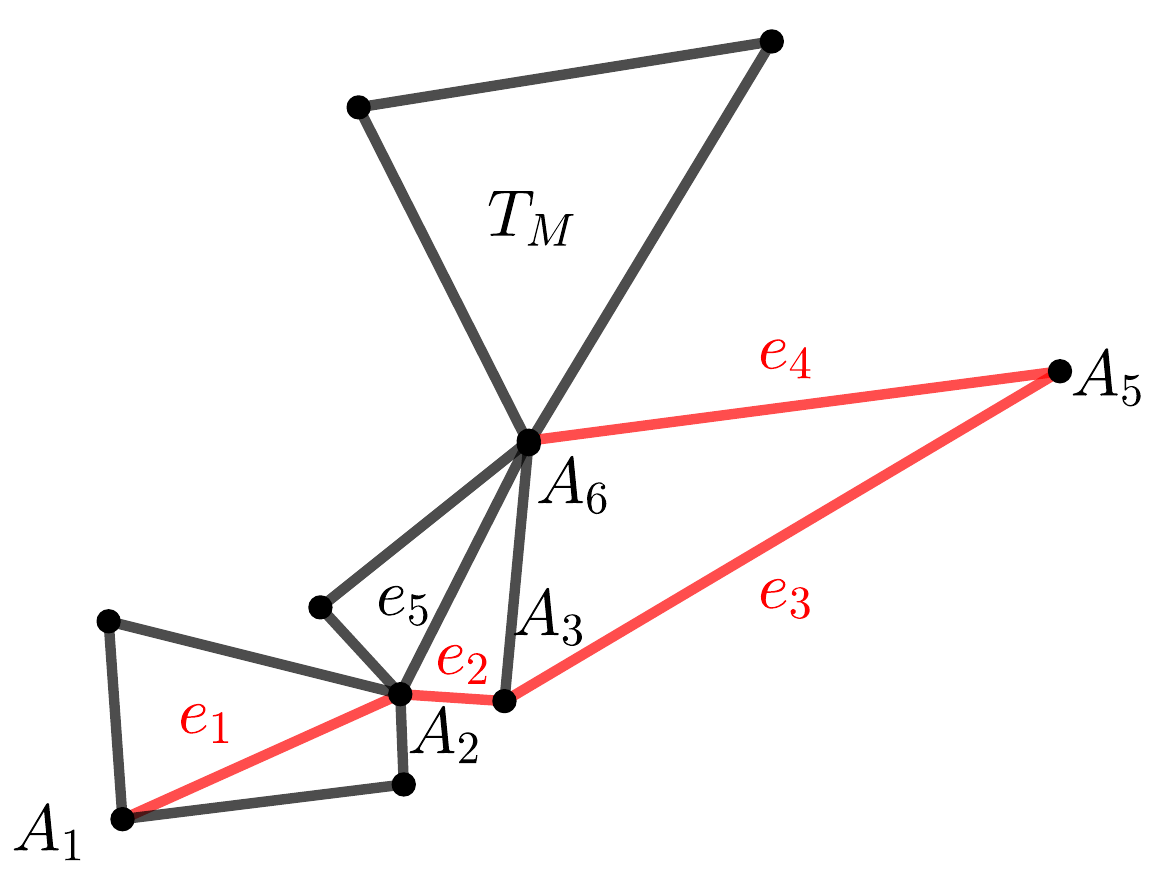}
  \caption{Illustration of Assumption \hyperref[asp:A2]{(A2)}: $e_5 = A_2A_6$ is not allowed in a path as the two neighborhood elements may shrink to this edge. Then, $e_2$, $e_3$ and $e_4$ are needed to connect $A_2$ and $A_6$.}
  \label{fig:path}
  \end{minipage}
  ~~~~
  \begin{minipage}{0.55\textwidth}
   \includegraphics[width=3in]{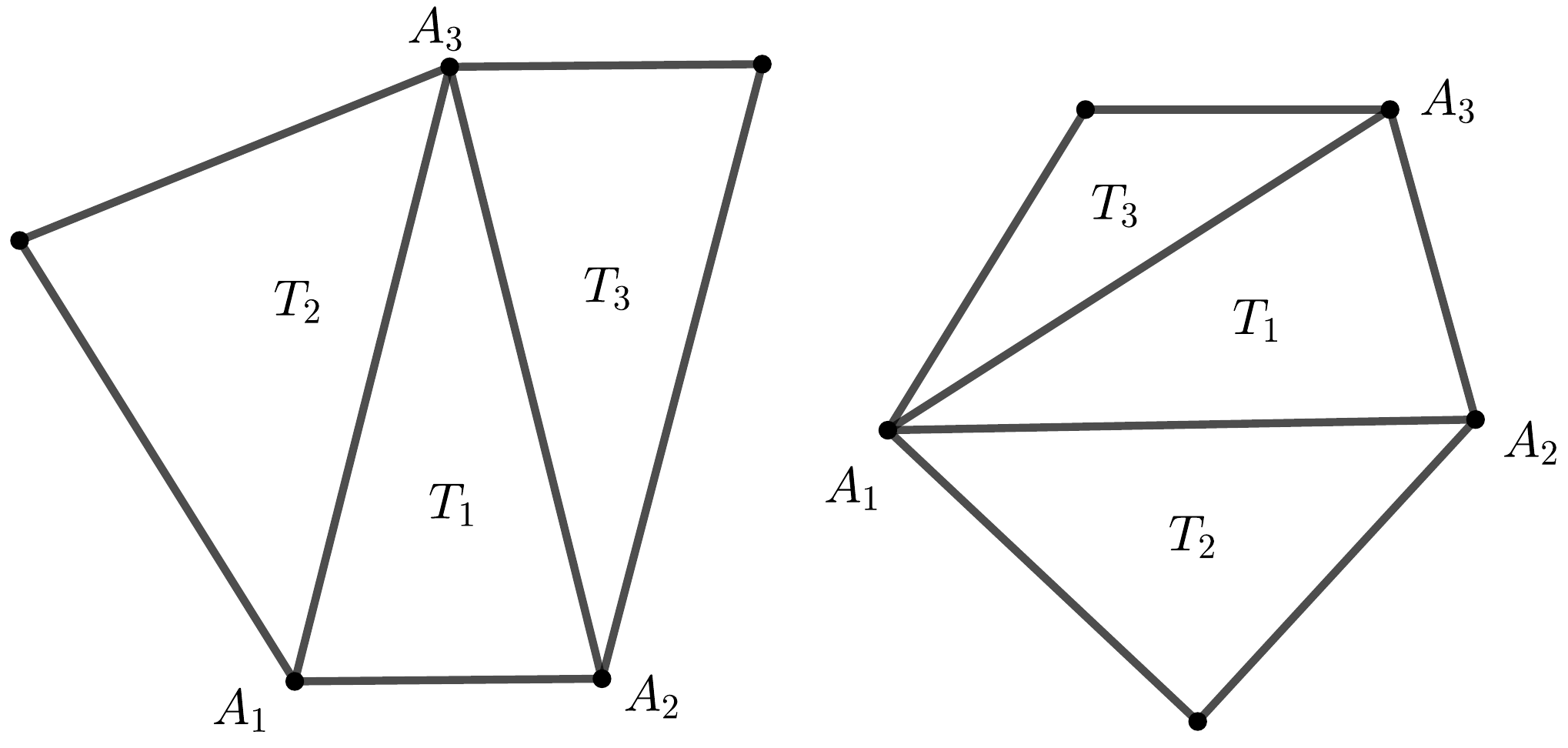}
  \caption{Configuration of anisotropic elements. $T_1$ has the minimum angle at $A_3$ (left) or at $A_1$ (right). Note that these triangles may not be coplanar.}
  \label{fig:anisotrop_triangles}
  \end{minipage}
\end{figure}

We recall the following projection estimate which will be frequently used in this work.
Given a domain $D$ and the non-negative integers $m$, $k$, let be $\mathrm{P}^k_D$ the $L^2$ projection form $H^{m+1}(D)$ to $\mathcal{P}_k(D)$. 
\begin{lemma}[\cite{1999Rudiger}]
\label{lem_proj}
Let $D$ be a domain and let $k\le m$. Then, for every $u\in H^{m+1}(D)$
\begin{equation}
\label{lem_proj_eq0}
\| u -  \mathrm{P}^m_D u \|_{k,D} \leqslant c_{m,k} h^{m+1-k}_D |u|_{m+1,\conv(D)}.
\end{equation}
\end{lemma}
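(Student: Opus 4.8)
The plan is to reduce the statement to a Bramble--Hilbert estimate for a concrete polynomial approximant on the convex hull $C:=\conv(D)$, which has the same diameter $h_D$ as $D$, and then to pass from that approximant to the orthogonal projection $\mathrm{P}^m_D$. First I would introduce the averaged (Sobolev) Taylor polynomial $Q^m u\in\mathcal{P}_m$ of degree $m$, defined by averaging the pointwise Taylor expansion $T^m_y u$ of $u$ over all base points $y$ ranging over the \emph{entire} convex set $C$ with the uniform weight $|C|^{-1}$; after integrating by parts in $y$ this depends only on $u$ and not on its derivatives, so it is well defined for $u\in H^{m+1}(C)$. The structural point, and the reason the final constant will be shape-independent, is that the averaging is taken over all of $C$ rather than over a small inscribed ball as in the Dupont--Scott construction; this is what removes any dependence on a chunkiness/aspect-ratio parameter.

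The quantitative core is the estimate $|u-Q^m u|_{k,C}\le c_{m,k}\,h_D^{m+1-k}|u|_{m+1,C}$ for $0\le k\le m+1$. I would obtain it from Taylor's theorem with integral remainder: writing $u(x)-T^m_y u(x)$ as a sum over $|\alpha|=m+1$ of $(x-y)^\alpha$ times an integral of $D^\alpha u$ along the segment $[y,x]$, and using convexity of $C$ to guarantee that this whole segment lies in $C$, so that only values of $D^\alpha u$ \emph{inside} $C$ occur. Differentiating $k$ times in $x$ and then taking the $L^2(C,\dd x)$ norm, the monomials contribute the factor $h_D^{m+1-k}$, while the remaining double integral in $(x,y)$ is of convolution type; a change of variables along the segment together with Young's (or Minkowski's) inequality bounds it by $|u|_{m+1,C}$, with a constant depending only on $m$, $k$ and the dimension.

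Finally I would transfer this to the orthogonal projection. For $k=0$ it is immediate: since $\mathrm{P}^m_D u$ is the best $L^2(D)$-approximation in $\mathcal{P}_m$ and $D\subseteq C$,
\[
\|u-\mathrm{P}^m_D u\|_{0,D}\le \|u-Q^m u\|_{0,D}\le \|u-Q^m u\|_{0,C}\le c\,h_D^{m+1}|u|_{m+1,C}.
\]
For $1\le k\le m$ I would write $u-\mathrm{P}^m_D u=(u-Q^m u)-\mathrm{P}^m_D(u-Q^m u)$, using $\mathrm{P}^m_D Q^m u=Q^m u$, so that the remaining task is to bound $\|\mathrm{P}^m_D w\|_{k,D}$ with $w:=u-Q^m u$, whose seminorms $|w|_{j,D}$ for $0\le j\le m+1$ already obey the target bound by the previous step.

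I expect this last transfer to be the main obstacle. One is tempted to bound the polynomial $\mathrm{P}^m_D w$ in $H^k$ by an inverse inequality $\|\cdot\|_{k,D}\lesssim h_D^{-k}\|\cdot\|_{0,D}$, but such an inequality is genuinely \emph{shape-dependent} and fails on thin elements (e.g.\ on $[0,h]\times[0,\varepsilon]$ the polynomial $y$ violates it as $\varepsilon\to0$), so it cannot be used here. Obtaining shape-independent $H^k$ control of the orthogonal projection on anisotropic $D$ is precisely the delicate contribution of \cite{1999Rudiger}; it is achieved not through a naive inverse estimate but through a shape-independent norm equivalence on $\mathcal{P}_m$ tied to $\conv(D)$ (equivalently, a Payne--Weinberger-type argument applied to the derivatives of the error), and I would follow that route to close the proof.
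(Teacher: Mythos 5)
First, some context: the paper never proves this lemma --- it is quoted verbatim from \cite{1999Rudiger} --- so your proposal is really an attempt to reconstruct the cited result, and I compare it against what can actually be proved. Your architecture and your $k=0$ step (a good polynomial on $C=\conv(D)$ plus best approximation of $\mathrm{P}^m_D$ in $L^2(D)$) are sound. The fatal problem is the step you flag and then defer: control of $\mathrm{P}^m_D$ in $H^k$ for $1\le k\le m$. This gap cannot be closed, because for the genuine $L^2$ projection the stated inequality is \emph{false} for $k\ge 1$ without extra shape hypotheses. Concretely, take $D=\conv\{(0,0),(1,0),(1/2,\epsilon)\}$ (convex, so $\conv(D)=D$ and $h_D=1$; note its maximum angle tends to $\pi$, consonant with the paper's theme that maximum angle conditions are what matter). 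Set $\tau(x)=\min(2x,2-2x)$, $\phi=\tau^2/2-\tau/3$, $\phi_1(x)=\int_0^x\phi\,\dd t$, $\Phi(x)=\int_0^x\phi_1\,\dd t$ (one checks $\phi_1(1)=\Phi(1)=0$), and let $u(x,y)=F(x)$ with $F''=\Phi/\tau$, which is bounded. In the $L^2(D)$-orthogonal basis $\{1,\,x-1/2,\,y-\epsilon/3\}$ one computes $\|y-\epsilon/3\|^2_{0,D}=\epsilon^3/36$, while two integrations by parts in $x$ give $(u,\,y-\epsilon/3)_{0,D}=\epsilon^2\int_0^1 F''\Phi\,\dd x=\epsilon^2\int_0^1\Phi^2/\tau\,\dd x$. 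Hence $\partial_y(\mathrm{P}^1_Du)=(36/\epsilon)\int_0^1\Phi^2/\tau\,\dd x$ although $\partial_y u\equiv0$, whereas $|u|_{2,D}=\left(\epsilon\int_0^1\Phi^2/\tau\,\dd x\right)^{1/2}$; therefore $|u-\mathrm{P}^1_Du|_{1,D}\ge c\,\epsilon^{-1}h_D|u|_{2,\conv(D)}$ with an absolute $c>0$, contradicting \eqref{lem_proj_eq0} as $\epsilon\to0$. This also rules out both rescue devices you name: a shape-independent equivalence of $\|\cdot\|_{0,D}$ and $\|\cdot\|_{0,\conv(D)}$ on $\mathcal{P}_m$ does not exist (take $D$ to be two small balls far apart and $p$ the transverse coordinate), and Payne--Weinberger applied to $\partial^\gamma(u-\mathrm{P}^m_Du)$ requires control of the averages $\int_D\partial^\gamma(u-\mathrm{P}^m_Du)\,\dd\bfx$, which $L^2$-orthogonality simply does not provide on anisotropic domains --- that is exactly what the counterexample exploits.

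What is true shape-independently, and what suffices for every use the paper makes of the lemma, is the \emph{existence} version: choose $p\in\mathcal{P}_m$ with $\int_C\partial^\alpha(u-p)\,\dd\bfx=0$ for all $|\alpha|\le m$ (a triangular linear system for the coefficients of $p$), and iterate the Payne--Weinberger inequality $\|g-\bar g\|_{0,C}\le(h_D/\pi)|g|_{1,C}$, valid on convex $C$, downward from the top seminorm, to get $|u-p|_{k,D}\le|u-p|_{k,C}\le(h_D/\pi)^{m+1-k}|u|_{m+1,C}$ for all $0\le k\le m+1$; so your Payne--Weinberger instinct is right, but it attaches to this derivative-average-matching polynomial, not to the $L^2$ projection. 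The same mechanism is needed to repair your ``quantitative core'': the Taylor-remainder/Young computation you sketch leaves a factor $h_D^n/|C|$ (the $1/|C|$ from the averaging does not cancel the $h_D^n$ from the kernel bound), so averaging over all of $C$ does not, by itself, remove aspect-ratio dependence. For $\mathrm{P}^m_D$ itself only $k=0$ survives, by best approximation as in your step 3. This weakened pair of statements covers the paper: its $k\ge1$ invocations occur only on $\conv(K)$ under Assumption (A3), on Cartesian cuboids, or on shape-regular patches --- all chunky domains where classical constants are uniformly bounded --- while on genuinely anisotropic sets the paper only ever uses $k=0$.
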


\subsection{A discrete Poincar\'e type inequality}
In this subsection, we discuss a discrete Poincar\'e type inequality which is basically Hypothesis \hyperref[asp:H2]{(H2)}. 
If a polyhedron $K$ is star convex to a ball of the radius $\mathcal{O}(h_K)$, the standard Poincar\'e inequality for $H^1(\partial K)$ holds \cite[Section 2.3]{Brenner;Sung:2018Virtual}. 
But the polyhedra considered in this work are much more irregular.

The maximum angle condition itself is not sufficient for the Poincar\'e inequality, and we introduce a \textit{path condition}. 
A \textit{path} between two vertices is defined as a collection of edges connecting these two vertices. Given each triangle $T$, 
denote $\theta_m(T)$ and $\theta_M(T)$ as the minimum and maximum angles of $T$. 
The following assumption is made:
\begin{itemize}
  \item[(\textbf{A2})] \label{asp:A2} Let $T_M\in\mathcal{T}_h(\partial K)$ have the maximum area. 
  For each vertex $\bfz\in \mathcal{N}_h(\partial K)$, there exists a vertex $\bfz'$ of $T_M$ and a path from $\bfz$ to $\bfz'$ such that, for each edge $e$ in this path, 
  one of its opposite angles $\theta_e$ satisfies $\theta_e \le (1+\epsilon) \theta_m(T)$ with $T$ being the element containing the angle $\theta_e$.
\end{itemize}

\begin{remark}
Note that Assumption \hyperref[asp:A2]{(A2)} does not pose any restrictions on the minimum angle which could be still arbitrarily small. 
Roughly speaking, for an edge in a path, the two neighborhood elements cannot shrink to this edge.
We use Figure \ref{fig:path} for illustration.
\end{remark}

Verifying Assumption \hyperref[asp:A2]{(A2)} is not easy sometimes, as it requires a global check overall all possible paths, which could be expensive if there are many triangles. 
So we introduce an alternative assumption which is much easier to verify, as it only requires local information. 

\begin{itemize}
  \item[(\textbf{A2}')] \label{asp:A2plus} Every triangle $T$ shares at least one edge with another triangle (including $T$ itself) which satisfies the minimum angle condition and whose size bounded below by $\mathcal{O}(h_T)$.
\end{itemize} 

\begin{lemma}
\label{lem_A2plus}
Assumptions \hyperref[asp:A1]{(A1)} and \hyperref[asp:A2plus]{(A2')} together imply \hyperref[asp:A2]{(A2)} with $\epsilon=\theta_M/\arcsin (\rho \sin(\theta_m) \sin(\theta_M))$, 
where $\theta_M$ is the maximum angle in Assumption \hyperref[asp:A1]{(A1)}, 
$\theta_m$ is the minimum angle in Assumption \hyperref[asp:A2]{(A2)}, 
and $\rho \le 1$ is such that the size of the triangle in Assumption \hyperref[asp:A2]{(A2)} greater than $\rho h_T$.
\end{lemma}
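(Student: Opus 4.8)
The plan is to recast both the target and the hypotheses in terms of a single notion of \emph{admissible} edge and then to produce the required path by a connectivity argument. Call an edge $e$ admissible if it has an adjacent triangle $T$ with $\theta_e \le (1+\epsilon)\theta_m(T)$, so that Assumption \hyperref[asp:A2]{(A2)} is precisely the statement that every vertex is joined to a vertex of $T_M$ by a path of admissible edges. The first step is the elementary reduction that makes the value of $\epsilon$ intelligible: writing $\phi := \arcsin(\rho\sin\theta_m\sin\theta_M)$, one has $\phi\le\theta_m$ (since $\rho\le1$ and $\sin\theta_M\le1$), while $\epsilon=\theta_M/\phi$ gives $(1+\epsilon)\phi=\phi+\theta_M\ge\theta_M$. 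Because \hyperref[asp:A1]{(A1)} forces every angle to be $\le\theta_M$, the admissibility inequality holds automatically for any adjacent triangle $T$ with $\theta_m(T)\ge\phi$: indeed $(1+\epsilon)\theta_m(T)\ge(1+\epsilon)\phi\ge\theta_M\ge\theta_e$. Thus it suffices to join each vertex to $T_M$ along edges each adjacent to a triangle whose minimum angle is at least $\phi$.

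The second step is the local building block supplied by \hyperref[asp:A2plus]{(A2')}. Fix a triangle $T=\bfz\bfz_1\bfz_2$ and let $T'$ be its good neighbour, sharing the edge $e$; I would show that all three vertices of $T$ lie in a single connected component of the graph of admissible edges. The shared edge $e$ is admissible through $T'$, since $\theta_m(T')\ge\theta_m\ge\phi$, and this joins $\bfz_1$ to $\bfz_2$. To attach the apex $\bfz$ I split into the two configurations of Figure \ref{fig:anisotrop_triangles}. If the minimum angle of $T$ sits at a base vertex, say $\bfz_1$, then the edge $\bfz\bfz_2$ has opposite angle exactly $\angle\bfz_1=\theta_m(T)$, so it is admissible through $T$ itself and attaches $\bfz$. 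If instead the minimum angle sits at the apex $\bfz$, then $e$ is the shortest edge of $T$, and here I invoke the law of sines: since $T'$ is good with diameter $\ge\rho h_T$ and minimum angle $\ge\theta_m$, its shortest edge, and hence $|e|$, is at least $\rho h_T\sin\theta_m$; combined with the identity $|e|/h_T=\sin\theta_z/\sin\theta_{\max}(T)$ and the bound $\sin\theta_{\max}(T)\ge\sin\theta_M$ available in this (thin, nearly right-or-obtuse) configuration, this yields $\sin\theta_z\ge\rho\sin\theta_m\sin\theta_M=\sin\phi$, whence $\theta_z=\theta_m(T)\ge\phi$. Both edges at $\bfz$ are then admissible through $T$.

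The third step assembles the global path. The previous step places the three vertices of every triangle in one component $C(T)$ of the admissible-edge graph. Two edge-adjacent triangles share two vertices, so $C(T_1)=C(T_2)$ whenever $T_1,T_2$ are edge-adjacent; since the face-adjacency dual graph of the surface triangulation $\mathcal{T}_h(\partial K)$ is connected, all triangles, and in particular $T_M$, lie in one common component. Hence every vertex is joined to a vertex $\bfz'$ of $T_M$ by admissible edges, which by the first step is exactly Assumption \hyperref[asp:A2]{(A2)} with the stated $\epsilon$.

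The main obstacle is the apex subcase of the local building block: one must convert the length inequality $|e|\ge\rho h_T\sin\theta_m$ into the angle inequality $\theta_z\ge\phi$ using only the maximum-angle bound and the size/shape control on the good neighbour. This requires care with the factor $\sin\theta_{\max}(T)$ — verifying that the thin apex configuration forces $\theta_{\max}(T)$ into the range where $\theta_{\max}(T)\le\theta_M$ gives $\sin\theta_{\max}(T)\ge\sin\theta_M$ — and with the monotonicity of $\sin$ on $[0,\pi/2]$ used to pass from $\sin\theta_z\ge\sin\phi$ to $\theta_z\ge\phi$. The generous choice $\epsilon=\theta_M/\phi$ (rather than the sharper $\theta_M/\phi-1$) is precisely what provides the slack needed to absorb the residual constants in the borderline acute configurations, so that the case distinctions of Figure \ref{fig:anisotrop_triangles} close the argument.
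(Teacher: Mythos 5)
Your proof is correct and follows essentially the same route as the paper's: a case split on where the minimum angle of the triangle sits relative to the edge shared with the good neighbour supplied by (A2'), with the identical law-of-sines computation showing that in the critical case (good neighbour attached to the shortest edge) the triangle's minimum angle is bounded below by $\arcsin(\rho\sin\theta_m\sin\theta_M)$, so that all its edges become admissible with the stated $\epsilon$. Your graph-theoretic assembly (merging admissible-edge components across edge-adjacent triangles via connectivity of the dual graph) is just a more explicit packaging of the paper's reduction to vertices joined by an edge, and the $\sin\theta_{\max}(T)\ge\sin\theta_M$ subtlety you flag is present, and equally unaddressed, in the paper's own proof.
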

\begin{proof}
See Appendix \ref{append:lem_A2plus}.
\end{proof}

\begin{remark}
\label{practical_assump}
In most of the situations, Assumption  \hyperref[asp:A2plus]{(A2')} is sufficient, for example those elements which may shrink to a face, e.g., the middle and right plots in Figure \ref{fig:cub}. 
But for some extreme case that elements may shrink to edges, such as the left plot in Figure \ref{fig:cub}, we still need to use Assumption \hyperref[asp:A2]{(A2)}, as Assumption \hyperref[asp:A2plus]{(A2')} does not hold there. 
In fact, in this case, it is possible that every triangle only has the bounded maximum angle.
\end{remark}

Now, we are ready to establish the Poincar\'e inequality and begin with the following lemma.
 
\begin{lemma}
\label{lem_oppo_angle}
Suppose a triangle $T$ has the maximum angle $\theta_M(T)$. 
Let $e$ be one of its edges with the opposite angle $\theta_e$ and the ending points $\bfx_1$ and $\bfx_2$. 
Suppose $\theta_e\le (1+\epsilon) \theta_m(T)$. Then, there holds
\begin{equation}
\label{lem_oppo_angle_eq0}
| w_h(\bfx_2) - w_h(\bfx_1) | \le \kappa \| \nabla w_h \|_{0,T} ~~~~ \forall w_h \in \mathcal{P}_1(T),
\end{equation}
\end{lemma}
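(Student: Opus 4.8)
The plan is to reduce the estimate to a purely geometric bound on the ratio $|e|/|T|^{1/2}$, where $|e|$ is the length of $e=\bfx_1\bfx_2$ and $|T|$ is the area of $T$, and then to control that ratio through the two angles of $T$ adjacent to $e$.

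First I would use that $w_h$ is affine, so $\nabla w_h$ is a constant (tangential) vector on $T$. Writing $w_h(\bfx_2)-w_h(\bfx_1)=\nabla w_h\cdot(\bfx_2-\bfx_1)$ and applying Cauchy--Schwarz gives $|w_h(\bfx_2)-w_h(\bfx_1)|\le |e|\,|\nabla w_h|$, while $\|\nabla w_h\|_{0,T}=|\nabla w_h|\,|T|^{1/2}$ since the integrand is constant. Hence the claim holds with $\kappa=|e|/|T|^{1/2}$, and everything reduces to bounding this quantity by a constant depending only on $\theta_M(T)$ and $\epsilon$.

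Next comes a trigonometric identity. Let $\bfx_3$ be the vertex opposite $e$, carrying the angle $\theta_e$, and let $\alpha_1,\alpha_2$ be the angles at $\bfx_1,\bfx_2$, so that $\alpha_1+\alpha_2=\pi-\theta_e$. Writing $|T|=\tfrac12|e|h_e$ with $h_e$ the height onto $e$ and invoking the law of sines, I would obtain
\[
\kappa^2=\frac{|e|^2}{|T|}=\frac{2|e|}{h_e}=\frac{2\sin\theta_e}{\sin\alpha_1\sin\alpha_2}.
\]
Since $\sin\theta_e=\sin(\alpha_1+\alpha_2)=\sin\alpha_1\cos\alpha_2+\cos\alpha_1\sin\alpha_2$, this collapses to the clean expression $\kappa^2=2(\cot\alpha_1+\cot\alpha_2)$, so it remains only to bound the two cotangents from above.

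The crux, and the step I expect to be the main obstacle, is to show that both angles adjacent to $e$ are bounded below by a positive constant, namely $\min(\alpha_1,\alpha_2)\ge \gamma:=(\pi-\theta_M(T))/(2+\epsilon)$; this is where the hypothesis $\theta_e\le(1+\epsilon)\theta_m(T)$ enters decisively, as it forbids $e$ from being the long edge opposite a large angle. I would argue in two cases, setting $\alpha_*:=\min(\alpha_1,\alpha_2)$ and using $\max(\alpha_1,\alpha_2)\le\theta_M(T)$. If $\alpha_*=\theta_m(T)$, then $\alpha_*=\pi-\theta_e-\max(\alpha_1,\alpha_2)\ge \pi-(1+\epsilon)\alpha_*-\theta_M(T)$ gives $(2+\epsilon)\alpha_*\ge \pi-\theta_M(T)$; if instead $\theta_e$ is the global minimum, then $\theta_e\le\alpha_*$ combined with $\alpha_*\ge \pi-\theta_e-\theta_M(T)$ yields $2\alpha_*\ge\pi-\theta_M(T)$. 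Either way $\alpha_*\ge\gamma>0$ (the dichotomy being exhaustive precisely because the maximum angle staying below $\pi$ rules out two simultaneously small angles). Since $\cot$ is decreasing on $(0,\pi)$ and $\gamma<\pi/2$, this gives $\cot\alpha_1+\cot\alpha_2\le 2\cot\gamma$, whence $\kappa\le 2\sqrt{\cot\gamma}$, a constant depending only on $\theta_M(T)$ and $\epsilon$; under Assumption \hyperref[asp:A1]{(A1)} the maximum angle is uniformly bounded away from $\pi$, so $\kappa$ is uniform over the mesh.
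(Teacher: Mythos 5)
Your proposal is correct, and it reaches the stated bound by a route that is recognizably parallel to the paper's but cleaner in execution. Both proofs start from the same reduction --- $|w_h(\bfx_2)-w_h(\bfx_1)| \le |e|\,|\nabla w_h|$ and $\|\nabla w_h\|_{0,T} = |\nabla w_h|\,|T|^{1/2}$, so everything hinges on bounding $|e|/|T|^{1/2}$ --- and both ultimately split into the cases ``$\theta_e$ is the minimum angle'' versus ``it is not''. The difference is how the geometric ratio is handled. The paper estimates $|T|$ from below separately in each case (via $|T|\ge \sin(\theta_M(T))|e|^2/2$ when $\theta_e$ is the minimum, and via the sine-law area formula combined with the angle-sum bound $(2+\epsilon)\theta_m(T)+\theta_M(T)\ge\pi$ otherwise), producing two different per-case constants. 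You instead derive the exact identity $|e|^2/|T| = 2(\cot\alpha_1+\cot\alpha_2)$ and relocate the entire case analysis into a single statement, the lower bound $\min(\alpha_1,\alpha_2)\ge\gamma:=(\pi-\theta_M(T))/(2+\epsilon)$, which you verify correctly in both cases using the same angle-sum mechanism the paper uses. This buys a unified argument and a sharper constant: since $4\cot\gamma\,\sin^2\gamma = 2\sin(2\gamma)\le 2$, your constant satisfies $2\sqrt{\cot\gamma}\le \sqrt{2}/\sin\gamma$, so your bound implies the lemma with the paper's stated $\kappa=\sqrt{2}/\sin\bigl((\pi-\theta_M(T))/(2+\epsilon)\bigr)$; that one-line comparison should be stated explicitly, since the lemma fixes that particular constant. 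One cosmetic remark: your parenthetical justification for the exhaustiveness of the dichotomy is unnecessary --- the minimum angle of $T$ is one of the three angles, so it is either $\theta_e$ or one of $\alpha_1,\alpha_2$, which is exactly your two cases.
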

where $\kappa = \sqrt{2}/\sin\left( \frac{\pi-\theta_M(T)}{2+\epsilon } \right)$.
\begin{proof}
If $\theta_e$ itself is the minimum angle, then, both the edges opposite to $\bfx_1$ and $\bfx_2$ should have the length greater than $|e|$.
Thus, we have $|T|\ge \sin(\theta_M(T))|e|^2/2$, and thus
\begin{equation}
\label{lem_oppo_angle_eq1}
| w_h(\bfx_2) - w_h(\bfx_1) | = \abs{ \int_{\bfx_1}^{\bfx_2} \partial_{\bft_e} w_h \dd s } \le |e| \| \nabla w_h \| \le \sqrt{2/\sin(\theta_M)} \| \nabla w_h \|_{0,T}.
\end{equation}
If $\theta_e$ is not the minimum angle, by the assumption we have $(2+\epsilon)\theta_m(T) + \theta_M(T) \ge \pi$ implying $\theta_m(T) \ge (\pi-\theta_M(T))/(2+\epsilon)$. 
In fact, we can let $\theta_e$ be the maximum angle to maximize the bound below. 
Then, by the sine law, there holds $|T| = |e|^2 \sin(\theta_m(T))\sin( \theta_e)/(2\sin(\theta_M(T)))$, and thus
\begin{equation}
\label{lem_oppo_angle_eq2}
| w_h(\bfx_2) - w_h(\bfx_1) | \le |e| \| \nabla w_h \| \le \sqrt{ 2\sin(\theta_M(T))/( \sin(\theta_m(T))\sin(\theta_e)) } \| \nabla w_h \|_{0,T}.
\end{equation}
As $\theta_e \in [\theta_m(T),\theta_M(T)]$, the desired estimate follows from \eqref{lem_oppo_angle_eq2} with the bound of $\theta_m(T)$.
\end{proof}

\begin{lemma}
\label{lem_disct_poinc}
Given a polyhedral element $K$, 
suppose $\partial K$ has a boundary triangulation $\mathcal{T}_h(\partial P)$ with the maximum angle $\theta_M$, 
and suppose it satisfies the Assumptions \hyperref[asp:A1]{(A1)} and \hyperref[asp:A2]{(A2)}. Then, there holds
\begin{equation}
\label{lem_disct_poinc_eq0}
\| v_h - \mathrm{P}^0_{\partial K} v_h \|_{0,\partial P} \le \sqrt{5} \kappa h_K |N_{\mathcal{T}}|^{1/2}  |v_h|_{1,\partial K}, ~~~~ \forall v_h\in \mathcal{B}_h(\partial K),
\end{equation}
where $\kappa$ inherits from Lemma \ref{lem_oppo_angle}, and $N_{\mathcal{T}}$ is the number of elements in the triangulation.
\end{lemma}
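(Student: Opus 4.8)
The plan is to establish the discrete Poincar\'e inequality \eqref{lem_disct_poinc_eq0} by controlling, for each vertex $\bfz\in\mathcal{N}_h(\partial K)$, the deviation of $v_h(\bfz)$ from a single reference value, namely the value of $v_h$ at a fixed vertex $\bfz_0$ of the maximal-area triangle $T_M$. The key mechanism is that Assumption \hyperref[asp:A2]{(A2)} furnishes, for every $\bfz$, a path $\bfz = \bfz^{(0)},\bfz^{(1)},\dots,\bfz^{(L)} = \bfz_0'$ to some vertex $\bfz_0'$ of $T_M$, where each consecutive pair is joined by an edge $e$ whose opposite angle satisfies $\theta_e\le(1+\epsilon)\theta_m(T)$. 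Along each such edge, Lemma \ref{lem_oppo_angle} gives the pointwise jump control
\begin{equation*}
|v_h(\bfz^{(i+1)}) - v_h(\bfz^{(i)})| \le \kappa \|\nabla v_h\|_{0,T_i}
\end{equation*}
with $T_i$ the triangle realizing that opposite angle. Summing along the path via the triangle inequality and then using Cauchy--Schwarz over the (boundedly many) edges in the path, I would obtain $|v_h(\bfz) - v_h(\bfz_0')| \lesssim \kappa (\#\text{path})^{1/2}\big(\sum_i \|\nabla v_h\|^2_{0,T_i}\big)^{1/2}$. Since each path has length at most $N_{\mathcal{T}}$ and each triangle is charged a bounded number of times, this is controlled by $\kappa |N_{\mathcal{T}}|^{1/2}|v_h|_{1,\partial K}$.

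Next I would convert this vertex-wise control into the $L^2$ bound on $\partial K$. On each triangle $T$ with vertices $\bfx_1,\bfx_2,\bfx_3$, an affine (linear) function is determined by its nodal values, and for the zero-average-type quantity one can write $v_h - \mathrm{P}^0_{\partial K}v_h$ in terms of vertex differences. The natural route is: bound $\|v_h - c\|_{0,T}$ for the constant $c = v_h(\bfz_0')$ by combining the gradient (which controls oscillation \emph{within} $T$ through $h_K|\nabla v_h|$-type terms) with the vertex deviations just established; then sum over all $T\in\mathcal{T}_h(\partial K)$, using $|\partial K|\lesssim h_K^2$ (up to the bounded face/edge count from \hyperref[asp:A1]{(A1)}) to produce the factor $h_K$. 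Finally, since the $L^2$ projection $\mathrm{P}^0_{\partial K}v_h$ is the best constant approximation, replacing the particular constant $c$ by the optimal one only decreases the left-hand side, so the bound with the fixed reference value $v_h(\bfz_0')$ suffices. The geometric factor $h_K$ emerges because area scales like $h_K^2$ while the vertex deviations carry no length scale, and one power of $h_K$ is already present in each Lemma \ref{lem_oppo_angle} estimate through the edge length bound $|e|\le h_K$.

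The main obstacle I anticipate is the careful bookkeeping needed to guarantee that the constant is uniform in element shape. Two subtleties demand attention: first, triangles in $\mathcal{T}_h(\partial K)$ may be arbitrarily anisotropic (only the \emph{maximum} angle is bounded, the minimum angle may vanish), so the within-triangle oscillation estimate $\|v_h - v_h(\bfz)\|_{0,T}$ must be shown to scale correctly with $|T|^{1/2}$ and $h_K$ \emph{without} invoking any minimum-angle or aspect-ratio quantity — this is precisely where Lemma \ref{lem_oppo_angle}, which trades the minimum angle for the opposite-angle condition of \hyperref[asp:A2]{(A2)}, does the essential work. Second, one must ensure no triangle is traversed by too many paths; since paths terminate at the fixed triangle $T_M$ and \hyperref[asp:A1]{(A1)} bounds the total number of faces and edges, the overlap is controlled, but this counting is what produces the $|N_{\mathcal{T}}|^{1/2}$ factor and must be tracked honestly. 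Once these scaling and counting issues are handled, assembling the global $L^2$ estimate and invoking optimality of $\mathrm{P}^0_{\partial K}$ is routine.
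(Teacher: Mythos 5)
Your overall route is the same as the paper's: replace $\mathrm{P}^0_{\partial K} v_h$ by a more convenient constant via best approximation, control vertex deviations along the paths of Assumption \hyperref[asp:A2]{(A2)} using Lemma \ref{lem_oppo_angle} with Cauchy--Schwarz over the path, and then pass from the pointwise bound to $L^2$ through the area of $\partial K$ and the bounded counts ($L \le 3N_{\mathcal{T}}/2$, $|\partial K|$ of order $h_K^2$). The only structural difference is cosmetic: the paper works with the single vertex where $v_h - \mathrm{P}^0_{T_M}v_h$ attains its maximum (so one path suffices and your worry about path overlap never arises), while you argue triangle by triangle.

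There is, however, one genuine gap. Your best-approximation step requires a \emph{single} constant $c$ on all of $\partial K$, yet the paths supplied by \hyperref[asp:A2]{(A2)} terminate at vertices $\bfz_0'$ of $T_M$ that \emph{vary} with the starting vertex $\bfz$; as written you set $c = v_h(\bfz_0')$, which is not one constant. If you instead fix $c = v_h(\bfz_0)$ for one vertex $\bfz_0$ of $T_M$, you must additionally bound $|v_h(\bfz') - v_h(\bfz_0)|$ for distinct vertices of $T_M$, and this bridging term is \emph{not} free of a length scale: since $\nabla v_h$ is constant on $T_M$, it is at worst $h_{T_M}\|\nabla v_h\|_{\infty,T_M} = h_{T_M}|T_M|^{-1/2}\|\nabla v_h\|_{0,T_M}$, and $h_{T_M}|T_M|^{-1/2}$ is unbounded for thin $T_M$ (only the maximum angle is controlled). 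After multiplication by $|\partial K|^{1/2}$ in the $L^2$ assembly, this term is tamed only by the \emph{maximality of the area} of $T_M$, i.e., $|\partial K| \le N_{\mathcal{T}}|T_M|$ --- a property your proposal never invokes, and which contradicts your claim that ``the vertex deviations carry no length scale.'' The paper handles precisely this point: with $c = \mathrm{P}^0_{T_M}v_h$ the function $v_h - c$ has zero mean on $T_M$, hence vanishes somewhere in $T_M$, giving $\|v_h - c\|_{\infty,\partial T_M} \le h_{T_M}|T_M|^{-1/2}\|\nabla v_h\|_{0,T_M}$, and the ratio is absorbed using $|\partial K|/|T_M| \le N_{\mathcal{T}}$. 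With this repair (either the paper's zero-mean device or your fixed-vertex constant plus the same area-ratio bound), your argument closes and is otherwise equivalent to the paper's proof.
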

\begin{proof}
To simplify the notation, $\nabla$ in this proof is understood as the surface gradient on $\partial P$. 
Let $T_M$ be the triangle with the maximum area, and we trivially have
\begin{equation}
\label{lem_disct_poinc_eq1}
\| v_h - \mathrm{P}^0_{\partial K} v_h \|_{0,\partial K} \le \| v_h - \mathrm{P}^0_{T_M} v_h \|_{0,\partial K}.
\end{equation}
Let $\tilde{v}_h = v_h - \mathrm{P}^0_{T_M} v_h$, and 
let $\bfz$ be the vertex at which $\tilde{v}_h$ achieves the maximum value on $\partial K$. 
Consider the path from Assumption \hyperref[asp:A2]{(A2)} connecting $\bfz$ and one vertex $\bfz'$ of $T_M$. 
Let the path be formed by $\{e_l\}_{l=1}^L$ with the neighborhood triangles $\{T_l\}_{l=1}^L$ described by Assumption \hyperref[asp:A2]{(A2)}.
Lemma \ref{lem_oppo_angle} implies
\begin{equation}
\begin{split}
\label{lem_disct_poinc_eq2}
\| \tilde{v}_h \|^2_{0,\partial K} \le |\partial K| |\tilde{v}_h(\bfz)|^{2} \le 2 |\partial K| |\tilde{v}_h(\bfz')|^{2} + 2 L  |\partial K| \kappa^2 \| \nabla v_h \|^2_{0,\cup_{l=1}^L T_l}.
\end{split}
\end{equation}
As $\tilde{v}_h$ must vanish at one point in $T_M$, and it is a linear polynomial, 
we simply have $\| \tilde{v} \|_{\infty,\partial T_M} \le h_{T_M}|T_M|^{-1/2} \| \nabla v_h \|_{0,T_M}$. 
Noticing $L \le 3|\mathcal{N}_T|/2$, we derive from \eqref{lem_disct_poinc_eq2} that
\begin{equation}
\label{lem_disct_poinc_eq3}
\| \tilde{v}_h \|^2_{0,\partial K} \le 2|\partial K|/|T_M| h^2 \| \nabla v_h \|^2_{0,T_M} + 3|\mathcal{N}_T|  |\partial K| \kappa^2 \| \nabla v_h \|^2_{0,\partial K}
\le 5|\mathcal{N}_T| h^2_K \kappa^2 \| \nabla v_h \|_{0,\partial K},
\end{equation}
where we have also used $|\partial K|/|T_M| \le |\mathcal{N}_T|$. It finishes the proof.
\end{proof}

\begin{remark}
\label{rem_disct_poinc}
For a 2D polygonal region $D$ with a triangulation $\mathcal{T}_h(D)$, we can show a similar estimate. 
Remarkably, the constants in these estimates are very explicitly specified and independent of the shape. 
It is worthwhile to mention that the constant in \eqref{lem_disct_poinc_eq0} goes to $\infty$ if the triangulation is very fine. 
This property, in fact, agrees with the bound of the classical Poincar\'e inequality for irregular domains, in the sense that the discrete space will approach the $H^1$ space.
\end{remark}



\subsection{Stabilization}
\label{subsec:stab}

With the boundary space in \eqref{BhK_fitted}, we consider the stabilization
\begin{equation}
\label{stab_f}
S_K(v_h,w_h) = h_K \sum_{F\in\mathcal{F}_K} (\nabla_F v_h,\nabla_F w_h)_{0,F}
\end{equation}
which is computable since functions in $\mathcal{B}_h(\partial K)$ are known given the boundary triangulation.

\begin{lemma}
\label{lem_stab_f_verify}
Under Assumptions \hyperref[asp:A1]{(A1)} and \hyperref[asp:A2]{(A2)}, \eqref{SK_equiv_1} in Hypothesis \hyperref[asp:H2]{(H2)} holds for $S_K$ in \eqref{stab_f}:
\begin{equation}
\label{lem_stab_f_verify_eq0}
\| \cdot \|_{0,\partial K} \lesssim h_K \|\cdot \|_{S^f_K}~~~ \text{in} ~ \mathcal{B}^0_h(\partial K) .
\end{equation}
\end{lemma}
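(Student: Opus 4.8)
The plan is to deduce \eqref{lem_stab_f_verify_eq0} directly from the discrete Poincar\'e inequality of Lemma \ref{lem_disct_poinc}, since all of the geometric labour has already been invested there; what remains is bookkeeping together with the verification that the constants produced are shape-independent. First I would record that, by the very definition \eqref{stab_f}, the stabilization norm is just a rescaling of the face-wise surface $H^1$ seminorm: for any $v_h\in\mathcal{B}_h(\partial K)$,
\begin{equation*}
\|v_h\|^2_{S^f_K}=S_K(v_h,v_h)=h_K\sum_{F\in\mathcal{F}_K}\|\nabla_F v_h\|^2_{0,F}=h_K\,|v_h|^2_{1,\partial K},
\end{equation*}
where $|v_h|_{1,\partial K}$ is read, exactly as in the proof of Lemma \ref{lem_disct_poinc}, as the broken surface gradient over the faces of $\partial K$.

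Next I would exploit the mean-zero constraint built into $\mathcal{B}^0_h(\partial K)$. Because $(v_h,1)_{\partial K}=0$, the $L^2(\partial K)$ projection onto constants vanishes, $\mathrm{P}^0_{\partial K}v_h=|\partial K|^{-1}(v_h,1)_{\partial K}=0$, so that $\|v_h\|_{0,\partial K}=\|v_h-\mathrm{P}^0_{\partial K}v_h\|_{0,\partial K}$. Applying Lemma \ref{lem_disct_poinc} to the right-hand side then gives
\begin{equation*}
\|v_h\|_{0,\partial K}\le \sqrt5\,\kappa\,|N_{\mathcal{T}}|^{1/2}\,h_K\,|v_h|_{1,\partial K}.
\end{equation*}
Substituting the stabilization identity above to trade $|v_h|_{1,\partial K}$ for $\|v_h\|_{S^f_K}$, and tracking the accompanying power of $h_K$, collapses the right-hand side to a constant multiple of $h_K\|v_h\|_{S^f_K}$, which is precisely \eqref{lem_stab_f_verify_eq0}.

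The one step I would treat as the crux---though it is mild---is showing that the two constants emitted by Lemma \ref{lem_disct_poinc}, namely $\kappa$ and $|N_{\mathcal{T}}|$, are bounded independently of the shape and size of $K$, so that they may legitimately be absorbed into $\lesssim$. For $\kappa$, recall from Lemma \ref{lem_oppo_angle} that $\kappa=\sqrt2/\sin\bigl((\pi-\theta_M)/(2+\epsilon)\bigr)$ depends only on the global maximum angle $\theta_M$ of the boundary triangulation and on the parameter $\epsilon$; Assumption \hyperref[asp:A1]{(A1)} keeps $\theta_M$ bounded away from $\pi$ and Assumption \hyperref[asp:A2]{(A2)} (or \hyperref[asp:A2plus]{(A2')} through Lemma \ref{lem_A2plus}) fixes $\epsilon$, so $\kappa=\mathcal{O}(1)$. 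For $|N_{\mathcal{T}}|$, Assumption \hyperref[asp:A1]{(A1)} caps the number of faces and edges and constrains each face triangulation to use vertices only in $\mathcal{E}_K$, whence the total count of boundary triangles is uniformly bounded. With both constants controlled, the estimate is robust in the element geometry, which is exactly the property demanded by Hypothesis \hyperref[asp:H2]{(H2)}.
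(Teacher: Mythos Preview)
Your argument is correct and is exactly the paper's approach: the paper's proof is the single line ``The result immediately follows from Lemma \ref{lem_disct_poinc},'' and you have simply spelled out the two bookkeeping steps (mean zero kills $\mathrm{P}^0_{\partial K}v_h$; the definition of $S_K$ gives $\|v_h\|_{S_K}=h_K^{1/2}|v_h|_{1,\partial K}$) together with the observation that $\kappa$ and $|N_{\mathcal{T}}|$ are uniformly bounded under Assumptions \hyperref[asp:A1]{(A1)}--\hyperref[asp:A2]{(A2)}. One small remark: carrying out your substitution actually yields $\|v_h\|_{0,\partial K}\lesssim h_K^{1/2}\|v_h\|_{S_K}$, matching \eqref{SK_equiv_1} in Hypothesis \hyperref[asp:H2]{(H2)} rather than the $h_K$ displayed in \eqref{lem_stab_f_verify_eq0}, which appears to be a typo in the lemma statement.
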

\begin{proof}
The result immediately follows from Lemma \ref{lem_disct_poinc}.
\end{proof}
We shall only discuss $S_K$ in \eqref{lem_stab_f_verify_eq0} in this work, but other stabilizations can be also considered.

\begin{remark}
\label{rem_stab}
Without explicitly forming the boundary triangulation, we can also consider
\begin{equation}
\label{stab_e}
\widetilde{S}_K(v_h,w_h) = h^{2}_K \sum_{e\in\mathcal{E}_K} (\partial_{\bft_e} v_h ,\partial_{\bft_e}w_h )_{0,e}.
\end{equation}
For $S_K = \widetilde{S}_K$, \eqref{SK_equiv_1} holds under an extra assumption for the boundary triangulation. 
\begin{itemize}
  \item[(\textbf{A1'})] \label{asp:A1E} Any edge $e\in\mathcal{E}_h(\partial K)\backslash \mathcal{E}_K$, i.e., an extra edge connected by some vertices, should either have the length $\mathcal{O}(h_K)$ or is one edge of a polygon whose other edges are from $\mathcal{E}_K$ and have the length $\mathcal{O}(h_e)$.
\end{itemize}  
Under Assumptions \hyperref[asp:A1E]{(A1)} and \hyperref[asp:A1E]{(A1')}, applying Lemma \ref{lem_grad} we are able to show that
\begin{equation}
\label{lem_stab_e_verify_eq0}
h^{-1}_K\| \cdot \|_{0,\partial K} \lesssim \| \cdot \|_{S^f_K}\lesssim \| \cdot \|_{S^e_K}, ~~~~ \text{in} ~ \mathcal{B}_h(\partial K).
\end{equation}
Computing $S^e_K$ is more efficient than $S^f_K$ as it only requires evaluating the function values on original edges in $\mathcal{E}_K$ (not the edges in $\mathcal{E}_h(\partial K)$). 
\end{remark}

 \section{Application I: elements with non-shrinking inscribed balls}
 \label{sec:fitted_1}

In this section, we analyze the proposed method on one type of anisotropic meshes: elements are allowed to merely contain \textit{but are not necessarily star convex to} non-shrinking balls.

To facilitate a clear presentation, we shall let $\beta=\beta_h=1$, and thus the virtual spaces defined through \eqref{lifting} and \eqref{lift1} become those classical in the literature. 
Let $\mathcal{W}_h(K) = \mathcal{P}_1(K)$, and thus $\nabla\Pi_K\cdot$ is just the standard $L^2$ projection to the constant vector space. 
All these setups are widely employed in the VEM literature. 
It is then trivial that $\mathcal{W}_h(K)$, $\mathscr{L}_K$ and $\mathcal{B}_h(\partial K)$ satisfy Hypothesis \hyperref[asp:H1]{(H1)}. 
Hypothesis \hyperref[asp:H2]{(H2)} has been discussed in Section \ref{subsec:stab}, and Hypothesis \hyperref[asp:H6]{(H6)} is trivial. 
We proceed to examine other Hypotheses in Section \ref{sec:assump}. 
Through this section, we remind readers that the standard interpolation estimates based on the maximum angle condition in \cite{1999AcostaRicardo,1976BabuskaAziz} are not directly applicable as it requires higher regularity assumptions, see Remark \ref{rem_interp_maxangle} below.



We make the following assumption.
\begin{itemize}
    \item[(\textbf{A3})] \label{asp:A3} Each element $K$ contains a ball $B_K$ of the radius $\mathcal{O}(h_K)$. 
    In addition,  there are $K_j$, $j=1,...,r$ such that $\conv(K) \subset \cup_{j=1}^r K_j$ with $r$ uniformly bounded.
\end{itemize}
\begin{remark}
\label{rem_nonshriking}
We highlight that Assumption \hyperref[asp:A3]{(A3)} does not require the star convexity with respect to $B_K$, 
so it is much weaker than the one in \cite{Brenner;Sung:2018Virtual}. 
In addition, it does not require that each face has a supporting height $\mathcal{O}(h_K)$ towards $K$, 
so it is also weaker than \cite{Cao;Chen:2018AnisotropicNC}. See Figure \ref{fig:anisotropic_element} for an example. 
Nevertheless, we point out that $\conv(K)$ is indeed convex with respect to $B_K$ which is fundamental for the analysis in this Section.
\end{remark}


Based on this assumption, we have the following trace inequality only for polynomials.
\begin{lemma}[A trace inequality on anisotropic elements]
\label{lem_traceinequa}
Under Assumption \hyperref[asp:A3]{(A3)}, there holds 
$$
\| v_h \|_{0,\partial K} \lesssim h^{-1/2}_K \| v_h \|_{0,K}, ~~~~ \forall v_h \in \mathcal{P}_1(K).
$$
\end{lemma}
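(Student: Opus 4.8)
The plan is to reduce the full boundary estimate to a single face and then argue by an explicit expansion of the linear polynomial about the center of the inscribed ball $B_K$ supplied by Assumption \hyperref[asp:A3]{(A3)}; in the spirit of this paper, no affine reference map is used. Since Assumption \hyperref[asp:A1]{(A1)} guarantees a uniformly bounded number of faces, it suffices to prove, for each face $F\in\mathcal{F}_K$, the single-face estimate $\|v_h\|_{0,F} \lesssim h_K^{-1/2}\|v_h\|_{0,K}$, and then sum the squares over $F\in\mathcal{F}_K$.

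Write the linear polynomial as $v_h(\bfx) = a + \bfg\cdot(\bfx - \bfx_0)$, where $\bfx_0$ is the center of $B_K$, $a = v_h(\bfx_0)$, and $\bfg = \nabla v_h$ is constant. The first step is a sharp lower bound on the element side. Restricting the integral to $B_K \subset K$ and using that the cross term $2a\int_{B_K} \bfg\cdot(\bfx-\bfx_0)\dd\bfx$ vanishes by the central symmetry of the ball, a direct computation gives $\|v_h\|^2_{0,K} \ge \|v_h\|^2_{0,B_K} \simeq r^3 a^2 + r^5 |\bfg|^2 \gtrsim h_K^3 a^2 + h_K^5|\bfg|^2$, where $r\simeq h_K$ is the radius of $B_K$. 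This is the one place where the ball geometry (rather than merely some fat subset) is genuinely exploited: the symmetry decouples the constant and gradient contributions with the correct powers of $h_K$.

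The second step is the matching upper bound on the face. Because $\bfx_0\in B_K\subset K$ and $\mathrm{diam}(K)=h_K$, every face $F$ lies in the ball $B(\bfx_0,h_K)$, so $|v_h(\bfx)| \le |a| + h_K|\bfg|$ for $\bfx\in F$; combined with $\mathrm{area}(F)\lesssim h_K^2$ (a planar set of diameter at most $h_K$) this yields $\|v_h\|^2_{0,F} \lesssim h_K^2 a^2 + h_K^4|\bfg|^2$. Comparing the two bounds shows that the face estimate is exactly $h_K^{-1}$ times the element lower bound, i.e. $\|v_h\|^2_{0,F} \lesssim h_K^{-1}\|v_h\|^2_{0,K}$, and summing over the $\mathcal{O}(1)$ faces from Assumption \hyperref[asp:A1]{(A1)} completes the proof.

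I expect the only delicate point to be the bookkeeping of the powers of $h_K$ in the two estimates: the inequality holds precisely because the ball contributes an extra factor $r\simeq h_K$ of volume relative to a face, so that the constant part ($a^2$: $h_K^3$ versus $h_K^2$) and the gradient part ($|\bfg|^2$: $h_K^5$ versus $h_K^4$) each gain the same factor $h_K$. Everything else is elementary; in particular, the convexity of $\conv(K)$ and the covering $\conv(K)\subset\cup_{j=1}^r K_j$ from Assumption \hyperref[asp:A3]{(A3)} are not needed for this polynomial trace estimate, although they will become essential for the $H^1$-type estimates later in this section.
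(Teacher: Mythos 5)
Your proof is correct, and it takes a genuinely different route from the paper's. The paper never expands $v_h$: it chains
$\| v_h \|_{0,\partial K} \lesssim h_K^{-1/2}\| v_h \|_{0,\conv(K)} \le h_K^{-1/2}\| v_h \|_{0,\widetilde{B}_K} \lesssim h_K^{-1/2}\| v_h \|_{0,B_K} \le h_K^{-1/2}\| v_h \|_{0,K}$,
where $\widetilde{B}_K$ is the ball concentric with $B_K$ of radius $h_K$; the first step is a polynomial trace inequality on the convex set $\conv(K)$ via (\textbf{G1}) of Lemma \ref{lem_tet_maxangle}, and the middle step is a norm equivalence for polynomials under homothety of balls cited from Lemma 2.2 of \cite{2016WangXiaoXu}. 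You replace both ingredients by one explicit computation: the central symmetry of $B_K$ kills the cross term and gives the decoupled lower bound $r^3a^2+r^5|\bfg|^2$, while the face bound is just a sup-norm-times-area estimate. What your version buys is a fully self-contained, elementary proof which also makes transparent that only the inscribed-ball half of Assumption \hyperref[asp:A3]{(A3)} is used (the covering $\conv(K)\subset\cup_j K_j$ plays no role here, exactly as you observe). What it gives up is generality: the expansion $v_h=a+\bfg\cdot(\bfx-\bfx_0)$ and the moment computation are tied to $\mathcal{P}_1$, whereas the paper's homothety argument extends verbatim to $\mathcal{P}_k$ for any fixed degree. One bookkeeping point in your favor: since both proofs ultimately bound $\partial K$ face by face, the constant picks up the number of faces, and you state explicitly that this is controlled by Assumption \hyperref[asp:A1]{(A1)} (a standing assumption in Section \ref{sec:fitted_1}); the paper's statement lists only \hyperref[asp:A3]{(A3)}, although its own face-wise application of (\textbf{G1}) needs the same uniform bound when summing over $\mathcal{F}_K$.
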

\begin{proof}
Let $B_K$ be the largest inscribed ball of $K$ with the center $O$, and let $\widetilde{B}_K$ be the ball centering at $O$ of the radius $h_K$. 
Clearly, $B_K \subset K\subset \conv(K) \subset \widetilde{B}_K$, and $\widetilde{B}_K$ is a homothetic mapping of $B_K$ of the ratio $\rho_{\widetilde{B}_K}/\rho_{{B}_K}\lesssim 1$ by Assumption \hyperref[asp:A3]{(A3)}. 
(\textbf{G1}) in Lemma \ref{lem_tet_maxangle} and Lemma 2.2 in \cite{2016WangXiaoXu} yield 
$$
\| v_h \|_{0,\partial K} \lesssim h^{-1/2}_K \| v_h \|_{0,\conv(K)} \lesssim h^{-1/2}_K \| v_h \|_{0,\widetilde{B}_K} 
 \lesssim h^{-1/2}_K \| v_h \|_{0,{B}_K}  \lesssim h^{-1/2}_K \| v_h \|_{0,K}.
$$ 
\end{proof}

Next, we estimate the interpolation errors on the boundary $\partial K$. 
\begin{lemma}
\label{lem_interp_1D}
Given an edge $e$, let $I_e$ be the 1D interpolation on $e$. Then, $\forall u\in H^1(e)$
\begin{equation}
\label{lem_interp_1D_eq0}
| I_e u |_{1,e} \le |u|_{1,e}.
\end{equation}
\end{lemma}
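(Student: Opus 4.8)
The plan is to establish the $H^1$-seminorm stability of the one-dimensional nodal interpolation operator $I_e$ directly from its defining property, namely that $I_e u$ is the affine (degree-one) function on the edge $e$ agreeing with $u$ at the two endpoints $\bfx_1,\bfx_2$ of $e$. The key observation is that on a one-dimensional domain, the interpolant $I_e u$ is exactly the secant line through the endpoint values, so its derivative is the constant difference quotient $(u(\bfx_2)-u(\bfx_1))/|e|$. The task then reduces to bounding this constant slope by the true derivative $u'$ in $L^2(e)$.

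First I would write the seminorm explicitly. Since $I_e u \in \mathcal{P}_1(e)$ has constant tangential derivative, we have $|I_e u|^2_{1,e} = |e|\,|(I_e u)'|^2 = |e| \left( \frac{u(\bfx_2)-u(\bfx_1)}{|e|}\right)^2 = \frac{1}{|e|}\left(\int_e u'\dd s\right)^2$, using the fundamental theorem of calculus $u(\bfx_2)-u(\bfx_1)=\int_e u'\dd s$, which is valid for $u\in H^1(e)$.

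Next I would apply the Cauchy--Schwarz inequality to the integral: $\left(\int_e u'\dd s\right)^2 \le |e|\int_e |u'|^2\dd s = |e|\,|u|^2_{1,e}$. Substituting this into the previous display gives $|I_e u|^2_{1,e} \le \frac{1}{|e|}\cdot |e|\,|u|^2_{1,e} = |u|^2_{1,e}$, and taking square roots yields the claimed bound \eqref{lem_interp_1D_eq0}. This is essentially the statement that the $L^2$-orthogonal-type projection onto constants (here the averaging of $u'$ over $e$) is a contraction in $L^2$, which makes the constant equal to $1$ rather than some shape-dependent factor.

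I do not anticipate any genuine obstacle here: the estimate is sharp and dimension-one, and the only technical point worth a moment of care is justifying the pointwise endpoint evaluations and the fundamental theorem of calculus, which are legitimate because $H^1(e)$ embeds continuously into $C(\bar e)$ in one dimension, so $u$ admits a continuous representative and $I_e u$ is well defined. The cleanliness of the constant is exactly what makes this lemma valuable downstream, since it feeds into the anisotropic interpolation estimates on $\partial K$ without introducing any dependence on edge lengths or angles.
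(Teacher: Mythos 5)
Your proof is correct and coincides with the paper's own argument: both write out the constant derivative of the affine interpolant as the difference quotient, convert the endpoint difference to $\int_e \partial_{\bft_e} u \dd s$ via the fundamental theorem of calculus, and conclude by Cauchy--Schwarz (H\"older) with constant exactly $1$. Your added remark on the $H^1(e)\hookrightarrow C(\bar e)$ embedding justifying the pointwise evaluations is a detail the paper leaves implicit, but the route is the same.
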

\begin{proof}
Let $A_1$ and $A_2$ be the two ending points of $e$. It follows from the H\"older's inequality that
\begin{equation}
\label{lem_interp_1D_eq1}
\| \partial_{\bft_e} I_e u \|^2_{0,e} = |e|^{-1} (u(A_2) - u(A_1))^2 = |e|^{-1} \left( \int_{e} \partial_{\bft_e} u \dd s \right)^2 \le \| \partial_{\bft_e} u \|_{0,e}.
\end{equation}
\end{proof}

\begin{lemma}
\label{lem_uI_face}
Let $u\in H^2(\conv(K))$.
Under Assumptions \hyperref[asp:A1]{(A1)} and \hyperref[asp:A2]{(A2)}, there holds
\begin{subequations}
\label{lem_uI_face_eq0}
\begin{align}
   &  \| u - u_I \|_{0, \partial K} \lesssim h^{3/2}_K \| u \|_{E,2,\conv(K)}  , \label{lem_uI_face_eq01}  \\ 
    &   | u - u_I |_{1,\partial K} \lesssim h^{1/2}_K \| u \|_{E,2,\conv(K)} . \label{lem_uI_face_eq02} 
\end{align}
\end{subequations}
\end{lemma}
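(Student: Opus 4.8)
The plan is to localize to the boundary triangulation and estimate triangle by triangle. By Assumption \hyperref[asp:A1]{(A1)} the number of triangles in $\mathcal{T}_h(\partial K)$ is uniformly bounded and their vertices lie in $\mathcal{E}_K$, so $u_I$ restricted to each $T\in\mathcal{T}_h(\partial K)$ is exactly the nodal linear interpolant $I_Tu$ (well defined because $H^2(\conv(K))\hookrightarrow C^0$ in three dimensions). On each $T$ I would introduce the local $L^2$-projection $\mathrm{P}^1_Tu\in\mathcal{P}_1(T)$ and, using $I_T\mathrm{P}^1_Tu=\mathrm{P}^1_Tu$, split $u-I_Tu=(u-\mathrm{P}^1_Tu)-I_Tw$ with $w:=u-\mathrm{P}^1_Tu$. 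The projection part is handled immediately by Lemma \ref{lem_proj} (with $m=1$): $|u-\mathrm{P}^1_Tu|_{1,T}\lesssim h_T|u|_{2,T}$ and $\|u-\mathrm{P}^1_Tu\|_{0,T}\lesssim h_T^2|u|_{2,T}$. Hence all the work is in the linear remainder $I_Tw$, whose nodal values $w(\bfx_i)$ must be controlled in a shape-robust way.

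For the $H^1$ seminorm \eqref{lem_uI_face_eq02}, since $I_Tw\in\mathcal{P}_1(T)$ its gradient is constant and I would reconstruct it from the two edges meeting at the vertex carrying the largest angle $\theta_M(T)$. The decisive geometric fact is that the largest angle of any triangle lies in $[\pi/3,\theta_M]$, so under the maximum-angle condition \hyperref[asp:A1]{(A1)} it is bounded away from $0$ and $\pi$, whence $\sin\theta_M(T)\gtrsim 1$ with a constant depending only on $\theta_M$. Writing the constant gradient through the two edge derivatives and dividing by $\sin\theta_M(T)$, together with $|w(\bfx_j)-w(\bfx_i)|\le|e|^{1/2}\|\partial_{\bft}w\|_{0,e}$ as in \eqref{lem_interp_1D_eq1}, yields $\|\nabla I_Tw\|_{0,T}\lesssim\sum_{e}H_e^{1/2}\|\partial_{\bft}w\|_{0,e}$, the sum running over those two edges and $H_e\sim|T|/|e|$ being the corresponding height. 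I would then lift $\|\partial_{\bft}w\|_{0,e}$ into $T$ by an anisotropic (normal-to-$e$) trace inequality $\|\partial_{\bft}w\|_{0,e}^2\lesssim H_e^{-1}|w|_{1,T}^2+H_e|w|_{2,T}^2$; the powers of $H_e$ cancel, and with $|w|_{1,T}\lesssim h_T|u|_{2,T}$ and $|w|_{2,T}=|u|_{2,T}$ this collapses to $|u-I_Tu|_{1,T}\lesssim h_T|u|_{2,T}$.

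For the $L^2$ bound \eqref{lem_uI_face_eq01} I would use $\|I_Tw\|_{0,T}\lesssim|T|^{1/2}\max_i|w(\bfx_i)|$ and estimate a single nodal value by the one-dimensional Sobolev inequality on an edge $e$ at $\bfx_i$, namely $\|w\|_{\infty,e}^2\lesssim|e|^{-1}\|w\|_{0,e}^2+|e|\,\|\partial_{\bft}w\|_{0,e}^2$, followed by the same anisotropic trace inequality applied to $w$ and to $\partial_{\bft}w$. Since $|T|=\tfrac12|e|H_e$, the weights telescope and, bounding $H_e,|e|\le h_T$ and using $\|w\|_{0,T}\lesssim h_T^2|u|_{2,T}$, $|w|_{1,T}\lesssim h_T|u|_{2,T}$, $|w|_{2,T}=|u|_{2,T}$, every term is $\lesssim h_T^4|u|_{2,T}^2$; thus $|T|^{1/2}|w(\bfx_i)|\lesssim h_T^2|u|_{2,T}$ and $\|u-I_Tu\|_{0,T}\lesssim h_T^2|u|_{2,T}$. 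Summing both triangle estimates over the finitely many $T$, recognizing $\sum_T|u|_{2,T}^2=\sum_{F\in\mathcal{F}_K}|u|_{2,F}^2$, and using $h_T\le h_K$ together with the fact that $\|\cdot\|_{E,2,\conv(K)}$ controls the face seminorms $h_K^{1/2}|u|_{2,F}$, yields \eqref{lem_uI_face_eq02} and \eqref{lem_uI_face_eq01}.

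The genuine obstacle is the anisotropic trace inequality $\|v\|_{0,e}^2\lesssim H_e^{-1}\|v\|_{0,T}^2+H_e|v|_{1,T}^2$ with a constant independent of the aspect ratio of $T$, since this is precisely where thinness could destroy the estimate; it is rescued by reading the gradient off the two edges at the largest-angle vertex, where $\sin\theta_M(T)\gtrsim1$, and by keeping the heights $H_e$, rather than the possibly tiny shortest edge, as the active length scale. A secondary subtlety is that $u\in H^2(\conv(K))$ is only three-dimensional regularity, so the face seminorms $|u|_{2,F}$ entering the bounds are not automatically finite from the bulk norm alone; these are exactly the quantities that $\|\cdot\|_{E,2,\conv(K)}$ is built to carry, so I would keep the face contributions explicit throughout and invoke that norm only at the very end.
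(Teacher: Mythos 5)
There is a genuine gap, and it is precisely the one the paper's Remark \ref{rem_interp_maxangle} warns about. Your argument is run entirely face-intrinsically: you project onto $\mathcal{P}_1(T)$ with the \emph{two-dimensional} projection $\mathrm{P}^1_T$ and invoke Lemma \ref{lem_proj} on the triangle itself, so every bound you produce is phrased in terms of the face seminorm $|u|_{2,T}$. But the hypothesis is $u\in H^2(\conv(K))$, which by trace theory only gives $u|_F\in H^{3/2}(F)$ on a planar face; the tangential second derivatives of $u$ on $T$ need not be square-integrable, so $|u|_{2,T}$ can be infinite. Your closing claim that $\|\cdot\|_{E,2,\conv(K)}$ ``is built to carry'' these face seminorms is false: that norm is defined in the paper (via the Sobolev extensions in \eqref{sobolev_ext}) as the purely volumetric quantity $\|u^+_E\|_{2,\conv(K)}+\|u^-_E\|_{2,\conv(K)}$, and no volumetric $H^2$ norm controls 2D face $H^2$ seminorms. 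This is exactly why the paper's proof never uses a face-local projection or 2D interpolation theory: it works with the three-dimensional projection $\mathrm{P}^1_{\conv(K)}$ throughout, converts face and edge quantities into volumetric ones by trace inequalities into shape-regular 3D subregions supplied by Lemma \ref{lem_tet_maxangle} (items (\textbf{G1})--(\textbf{G3}), which is where the non-shrinking ball of $\conv(K)$ enters), bounds edge terms by the 1D stability $|I_e u|_{1,e}\le|u|_{1,e}$ of Lemma \ref{lem_interp_1D} combined with a fractional-order trace estimate (Lemma 2.1 of Brenner--Sung), estimates vertex values through a shape-regular tetrahedron attached to the vertex, and reassembles the surface gradient by Lemma \ref{lem_grad}. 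Your route is the classical Babu\v{s}ka--Aziz/Dur\'an one, and under the stated regularity it cannot be repaired by any choice of norm on the right-hand side that the paper provides.

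A secondary, fixable, error: the anisotropic trace inequality you rely on, $\|v\|_{0,e}^2\lesssim H_e^{-1}\|v\|_{0,T}^2+H_e|v|_{1,T}^2$, is false on thin triangles, which is the regime you are in (the two edges at the largest-angle vertex of a sliver have small heights). Take $T$ with vertices $(0,0)$, $(1,0)$, $(0,\delta)$, let $e$ be the unit edge on the $x$-axis (so $H_e=\delta$), and let $v=v(x)$ vanish for $x\le 1-\delta$ and rise linearly to $1$ at $x=1$: then $\|v\|_{0,e}^2\simeq\delta$ while $H_e^{-1}\|v\|_{0,T}^2+H_e|v|_{1,T}^2\simeq\delta^2$. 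The correct inequality (obtained, e.g., from the divergence theorem with the field $(\bfx-P)/H_e$, $P$ the opposite vertex) carries the factor $h_T^2/H_e$, not $H_e$, on the gradient term; as it happens, this weaker version would still make your exponents cancel in both estimates, so this slip is not what defeats the proof --- the face-regularity issue above is.
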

\begin{proof}
Given an element $K$ and a triangle $T\in\mathcal{F}_h(\partial K)$,
consider the projection $\mathrm{P}^k_{\conv(K)}$, $k=0,1$. For \eqref{lem_uI_face_eq01}, we have
\begin{equation}
\label{lem_uI_face_eq1}
\| u - u_I \|_{0, T} \le \| u - \mathrm{P}^1_{\conv(K)} u \|_{0, T}  +  \| u_I - \mathrm{P}^1_{\conv(K)} u \|_{0, T} .
\end{equation}
The trace inequality with Assumption \hyperref[asp:A1]{(A1)}, Lemma \ref{lem_proj} and \textbf{(G1)} in Lemma \ref{lem_tet_maxangle} imply
\begin{equation}
\begin{split}
\label{lem_uI_face_eq2}
\| u - \mathrm{P}^1_{\conv(K)} u \|_{0, T} & \lesssim h^{-1/2}_K \| u - \mathrm{P}^1_{\conv(K)} u \|_{0,\conv(K)} \\ 
&+ h^{1/2}_K  | u - \mathrm{P}^1_{\conv(K)} u |_{1,\conv(K)} \lesssim h^{3/2}_K |u|_{2,\conv(K)}.
\end{split}
\end{equation}
As for the second term in \eqref{lem_uI_face_eq1}, noticing that $u_I - \mathrm{P}^1_{\conv(K)} u\in \mathcal{P}_1(T)$, we trivially have 
\begin{equation}
\label{lem_uI_face_eq3}
\| u_I - \mathrm{P}^1_{\conv(K)} u \|_{0, T} \lesssim |T|^{1/2} \abs{ (u_I - \mathrm{P}^1_{\conv(K)} u )(\bfa) },
\end{equation} 
where $\bfa$ is some vertex of $T$.
We consider the shape-regular tetrahedron $T'$ given by (\textbf{G3}) in Lemma \ref{lem_tet_maxangle} that has $\bfa$ as one vertex, and let $I_{T'}$ be the standard Lagrange interpolation on $T'$. Then, by applying the trace inequality and the triangular inequality, we obtain
\begin{equation}
\begin{split}
\label{lem_uI_face_eq4}
 \abs{ (u_I - \mathrm{P}^1_{\conv(K)} u)(\bfa)  }& \lesssim h^{-1}_K \| I_T u - \mathrm{P}^1_{\conv(K)} u \|_{0,T'} \\
 & \lesssim h^{-1}_K \left(  \| I_T u -  u \|_{0,T'} +  \| \mathrm{P}^1_{\conv(K)} u -  u \|_{0,T'}  \right)  \lesssim h_K | u |_{2,\conv(K)}.
 \end{split}
\end{equation}
Putting \eqref{lem_uI_face_eq4} into \eqref{lem_uI_face_eq3} and combining it with \eqref{lem_uI_face_eq2}, we have \eqref{lem_uI_face_eq01}.

Next, we prove \eqref{lem_uI_face_eq02}. The triangular inequality yields
\begin{equation}
\label{lem_uI_face_eq5}
| u - u_I |_{1,T} \le \| \nabla_{\partial K}(u - \mathrm{P}^1_{\conv(K)}) u \|_{0,T} + \| \nabla_{\partial K}(\mathrm{P}^1_{\conv(K)} u - u_I) \|_{0,T}.
\end{equation}
The estimate of the first term on the right-hand side in \eqref{lem_uI_face_eq5} follows from the similar argument to \eqref{lem_uI_face_eq2} with the trace inequality. 
We focus on the second term in \eqref{lem_uI_face_eq5}. By Lemma \ref{lem_grad}, we have 
\begin{equation}
\begin{split}
\label{lem_uI_face_eq6}
\| \nabla_{\partial K} ( u_I - \mathrm{P}^1_{\conv(K)} u ) \|_{0,T} \lesssim & \sum_{e\subseteq\partial T} h^{1/2}_T \| \nabla_{\partial K} (u_I - \mathrm{P}^1_{\conv(K)} u )\cdot \bft_e \|_{0,e} .
 \end{split}
\end{equation}
By (\textbf{G2}) in Lemma \ref{lem_tet_maxangle}, we have shape-regular trapezoid $T'$ and pyramid $T''$ contained in $\conv(K)$ with the size $\mathcal{O}(h_K)$. 
Note that $I_eu = u_I$ on $e$. 
Then, by Lemma \ref{lem_interp_1D} and Lemma 2.1 in \cite{Brenner;Sung:2018Virtual}, we have
\begin{equation}
\begin{split}
\label{lem_uI_face_eq7}
& \| \partial_{\bft_e}( I_e u -  \mathrm{P}^1_{\conv(K)} u ) \|_{0,e}  = \| \partial_{\bft_e}I_e ( u -  \mathrm{P}^1_{\conv(K)} u ) \|_{0,e} \\
 \lesssim & |  u -  \mathrm{P}^1_{\conv(K)} u  |_{1,e}  
 \lesssim h^{-1/2}_K |  u -  \mathrm{P}^1_{\conv(K)} u  |_{1,T'} + |  u  |_{3/2,T'} \\
  \lesssim & h^{-1}_K |  u -  \mathrm{P}^1_{\conv(K)} u  |_{1,T''} +   |  u -  \mathrm{P}^1_{\conv(K)} u  |_{2,T''} + |  u   |_{2,T''}
    \lesssim  |  u   |_{2,\conv(K)}.
\end{split}
\end{equation}
Noticing $\nabla_{\partial K}\cdot \bft_e = \partial_{\bft_e} \cdot$, and putting \eqref{lem_uI_face_eq6} and \eqref{lem_uI_face_eq7} into \eqref{lem_uI_face_eq5}, we finish the proof.
\end{proof}

\begin{remark}
\label{rem_interp_maxangle}
Interpolation estimates on a triangle $T$ with the maximum angle condition generally demand relatively higher regularity \cite{1999Duran,1994Shenk}:
\begin{equation}
\label{rem_interp_maxangle_eq1}
\| \nabla(u- I_T) \|_{0,T} \lesssim h_T \| u \|_{2,T}.
\end{equation} 
But as triangulation is on faces on which $u$ has merely $H^{3/2}$ regularity, \eqref{rem_interp_maxangle_eq1} can not be applied directly to obtain the bound in terms of $\|\cdot\|_{2,\conv(K)}$. 
In fact, $\conv(K)$ in \eqref{lem_uI_face_eq0} can be replaced by any shape-regular region containing $K$. 
If $T$ is a tetrahedron, then the interpolation estimate requires even higher regularity \cite{1999Duran}, i.e., $W^{p,2}$, $p>2$, which cannot be further improved, see the counterexample in \cite{1994Shenk}. 
This property adds more complexity to the anisotropic analysis for 3D shrinking elements, see the discussion in the next section.
\end{remark}

Now, Hypotheses \hyperref[asp:H3]{(H3)}-\hyperref[asp:H5]{(H5)} follow from the above estimates.
\begin{lemma}
\label{lem_assump_verify1}
Under Assumptions \hyperref[asp:A1]{(A1)} and \hyperref[asp:A3]{(A3)}, Hypothesis \hyperref[asp:H3]{(H3)} holds.
\end{lemma}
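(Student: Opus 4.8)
The plan is to unpack the local energy norm and reduce every contribution to the boundary estimates already furnished by Lemma \ref{lem_uI_face}. Since $\beta=\beta_h=1$ and $\mathcal{W}_h(K)=\mathcal{P}_1(K)$ in this section, we have
\begin{equation*}
\vertiii{u-u_I}_K^2 = \|\nabla\Pi_K(u-u_I)\|_{0,K}^2 + \|(u-u_I)-\Pi_K(u-u_I)\|_{S_K}^2,
\end{equation*}
so it suffices to bound each term on the right by $h_K^2\|u\|_{E,2,\conv(K)}^2$ and then sum. The crucial observation, which lets us sidestep the lack of an interior interpolation estimate (unavailable here, cf.\ Remark \ref{rem_interp_maxangle}), is that $\nabla\Pi_K v$ is the $L^2$-projection of $\nabla v$ onto constant vectors, and hence by the divergence theorem $\nabla\Pi_K v = |K|^{-1}\int_{\partial K} v\,\bfn\dd s$ depends only on the boundary trace of $v$.

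First I would estimate the gradient term. Writing $\nabla\Pi_K(u-u_I)=|K|^{-1}\int_{\partial K}(u-u_I)\bfn\dd s$ and applying Cauchy--Schwarz on $\partial K$ gives $\|\nabla\Pi_K(u-u_I)\|_{0,K}^2 \le (|\partial K|/|K|)\,\|u-u_I\|_{0,\partial K}^2$. Assumption \hyperref[asp:A3]{(A3)} yields $|K|\gtrsim h_K^3$, while Assumption \hyperref[asp:A1]{(A1)} together with $|F|\lesssim h_K^2$ gives $|\partial K|\lesssim h_K^2$, so the prefactor is $\lesssim h_K^{-1}$. Combining this with \eqref{lem_uI_face_eq01} produces $\|\nabla\Pi_K(u-u_I)\|_{0,K}^2 \lesssim h_K^{-1}\cdot h_K^3\|u\|_{E,2,\conv(K)}^2 = h_K^2\|u\|_{E,2,\conv(K)}^2$.

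Next I would handle the stabilization term. By \eqref{stab_f} we have $\|v\|_{S_K}^2 = h_K|v|_{1,\partial K}^2$, and the triangle inequality splits $|(u-u_I)-\Pi_K(u-u_I)|_{1,\partial K}$ into $|u-u_I|_{1,\partial K}$ and $|\Pi_K(u-u_I)|_{1,\partial K}$. The former is bounded by $h_K^{1/2}\|u\|_{E,2,\conv(K)}$ directly from \eqref{lem_uI_face_eq02}. For the latter, since $\nabla_F\Pi_K(u-u_I)$ is the (constant) tangential part of $\nabla\Pi_K(u-u_I)$, we get $|\Pi_K(u-u_I)|_{1,\partial K}^2 \le |\partial K|\,|\nabla\Pi_K(u-u_I)|^2 = (|\partial K|/|K|)\|\nabla\Pi_K(u-u_I)\|_{0,K}^2$, which by the previous paragraph is $\lesssim h_K\|u\|_{E,2,\conv(K)}^2$. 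Hence $\|(u-u_I)-\Pi_K(u-u_I)\|_{S_K}^2 \lesssim h_K^2\|u\|_{E,2,\conv(K)}^2$ as well.

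Putting the two bounds together gives the local estimate $\vertiii{u-u_I}_K^2\lesssim h_K^2\|u\|_{E,2,\conv(K)}^2$; summing over $K$, where the second part of Assumption \hyperref[asp:A3]{(A3)} ensures the covers $\{\conv(K)\}$ have uniformly bounded overlap so that $\sum_K\|u\|_{E,2,\conv(K)}^2\lesssim\|u\|_{2,\Omega}^2$, yields \eqref{approxi_VK_Pi}. I expect the main obstacle to be the gradient term: the natural route through an interior estimate of $\|\nabla(u-u_I)\|_{0,K}$ is blocked for anisotropic elements, so the entire argument hinges on recognizing that $\nabla\Pi_K$ reads only boundary data, which is precisely what makes Lemma \ref{lem_uI_face} applicable; the secondary subtlety is transferring $|\Pi_K(u-u_I)|_{1,\partial K}$ from the interior to the boundary via the constancy of the projected gradient and the inverse-type ratio $|\partial K|/|K|\lesssim h_K^{-1}$.
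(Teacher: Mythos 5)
Your proof is correct and takes essentially the same route as the paper: both arguments move $\nabla\Pi_K(u-u_I)$ to the boundary via the divergence theorem (the paper phrases this as integration by parts against the constant vector $\bfp_h=\nabla\Pi_K(u-u_I)$), invoke Lemma \ref{lem_uI_face} for the boundary interpolation errors, and treat the stabilization term by the triangle inequality together with the gradient bound. The only difference is cosmetic: where the paper cites the trace inequality of Lemma \ref{lem_traceinequa}, you verify the needed constant-vector case by hand through $\|\bfp_h\|_{0,\partial K}^2/\|\bfp_h\|_{0,K}^2=|\partial K|/|K|\lesssim h_K^{-1}$, which rests on exactly the same geometric input (Assumptions \hyperref[asp:A1]{(A1)} and \hyperref[asp:A3]{(A3)}).
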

\begin{proof}
Let $\bfp_h = \nabla \Pi_K( u - u_I) \in [\mathcal{P}_0(K)]^3$. Integration by parts and Lemmas \ref{lem_uI_face} and \ref{lem_traceinequa} lead to
$$
\| \bfp_h \|^2_{0,K} = (\bfp_h\cdot\bfn, u-u_I)_{0,\partial K} \le \| \bfp_h \|_{0,\partial K} \| u-u_I \|_{0,\partial K}  \lesssim \| \bfp_h \|_{0,K}  h_K \| u \|_{2,\conv(K)} .
$$
Cancelling one $\| \bfp_h \|_{0,K}$ yields the estimate of $\| \nabla \Pi_K(u-u_I) \|_{0,K}$.
In addition, we note that
\begin{equation}
\label{assump_verify2}
\| (u - u_I) - \Pi_K(u-u_I) \|_{S_K} \le h^{1/2}_K \| \nabla_{\partial K}(u-u_I) \|_{0,\partial K} + h^{1/2}_K \| \nabla_{\partial K} \Pi_K (u-u_I) \|_{0,\partial K}
\end{equation}
of which the first term directly follows from Lemma \ref{lem_uI_face}, and the second term follows from the estimate of $\| \nabla \Pi_K(u-u_I) \|_{0,K}$ and the trace inequality in Lemma \ref{lem_traceinequa}. 
\end{proof}

\begin{lemma}
\label{lem_assump_verify2}
Under Assumptions \hyperref[asp:A1]{(A1)} and \hyperref[asp:A3]{(A3)}, Hypothesis \hyperref[asp:H4]{(H4)} holds.
\end{lemma}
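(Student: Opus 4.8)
The plan is to verify both estimates in Hypothesis \hyperref[asp:H4]{(H4)} for the choice $\mathcal{W}_h(K)=\mathcal{P}_1(K)$ and $\beta=\beta_h=1$, so that $\Pi_K$ reduces to the $H^1$-seminorm projection onto linear polynomials whose gradient is the $L^2$ projection of $\nabla u$ onto constant vectors. The strategy mirrors Lemma \ref{lem_assump_verify1}: compare $\Pi_K u$ against the polynomial $\mathrm{P}^1_{\conv(K)} u$ supplied by Lemma \ref{lem_proj}, and then pay the anisotropy only through the polynomial trace inequality of Lemma \ref{lem_traceinequa}, which holds precisely because $\conv(K)$ is convex with respect to the non-shrinking ball $B_K$ (Assumption \hyperref[asp:A3]{(A3)}). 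First I would treat the interior bound \eqref{Pi_approx_eq1}: since $\nabla\Pi_K u$ is the best constant-vector approximation of $\nabla u$ in $L^2(K)$, it is bounded by $\|\nabla(u-\mathrm{P}^1_{\conv(K)}u)\|_{0,K}\le\|\nabla(u-\mathrm{P}^1_{\conv(K)}u)\|_{0,\conv(K)}$, and Lemma \ref{lem_proj} with $m=1,k=1$ gives the $O(h_K)$ estimate after squaring and summing over $K$.

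The boundary terms in \eqref{Pi_approx_eq2} are where the real work lies. The plan is to insert the intermediate polynomial and split
\begin{equation*}
\|\nabla(u-\Pi_K u)\|_{0,\partial K}\le \|\nabla(u-\mathrm{P}^1_{\conv(K)}u)\|_{0,\partial K}+\|\nabla(\mathrm{P}^1_{\conv(K)}u-\Pi_K u)\|_{0,\partial K}.
\end{equation*}
For the first summand I would apply a trace inequality passing from $\partial K$ to $\conv(K)$ (the version used in \eqref{lem_uI_face_eq2}, valid under Assumption \hyperref[asp:A1]{(A1)} together with \textbf{(G1)} of Lemma \ref{lem_tet_maxangle}), losing a factor $h_K^{-1/2}$, and then invoke Lemma \ref{lem_proj} to recover $h_K^{1/2}|u|_{2,\conv(K)}$, which matches the required $h_K\|\nabla(u-\Pi_K u)\|^2_{0,\partial K}\lesssim h_K^2|u|^2_{2,\conv(K)}$. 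For the second summand, $\mathrm{P}^1_{\conv(K)}u-\Pi_K u$ is a polynomial in $\mathcal{P}_1(K)$ whose gradient is constant, so I can apply the polynomial trace inequality of Lemma \ref{lem_traceinequa} directly to its constant gradient components, converting the $\partial K$ norm into a $K$ norm at the cost of $h_K^{-1/2}$, and then bound it by $\|\nabla(u-\mathrm{P}^1_{\conv(K)}u)\|_{0,K}+\|\nabla(u-\Pi_K u)\|_{0,K}$, both already controlled by \eqref{Pi_approx_eq1}.

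For the stabilization term $\|u-\Pi_K u\|_{S_K}$ with $S_K$ given by \eqref{stab_f}, by definition $\|u-\Pi_K u\|^2_{S_K}=h_K\sum_{F}\|\nabla_F(u-\Pi_K u)\|^2_{0,F}\le h_K\|\nabla(u-\Pi_K u)\|^2_{0,\partial K}$, since the surface gradient is dominated by the full gradient. Thus this contribution is already subsumed by the boundary-gradient estimate just obtained, and no new argument is needed. I expect the main obstacle to be the bookkeeping at the transition from $\partial K$ to the volume $\conv(K)$: the ordinary trace inequality is unavailable on anisotropic $K$, so one must carefully route \emph{polynomial} quantities through Lemma \ref{lem_traceinequa} and \emph{non-polynomial} residuals through the convexity-enabled trace passage of \textbf{(G1)} in Lemma \ref{lem_tet_maxangle}, keeping the two mechanisms separate. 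Once the insert-the-projection decomposition is set up so that every term is either a pure polynomial or a Lemma \ref{lem_proj} residual, summing the squared estimates over $K\in\mathcal{T}_h$ and using $\sum_K|u|^2_{2,\conv(K)}\lesssim\|u\|^2_{2,\Omega}$ (finite overlap of the $\conv(K)$, guaranteed by the bounded $r$ in Assumption \hyperref[asp:A3]{(A3)}) closes the proof.
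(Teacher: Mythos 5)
Your proposal is correct and takes essentially the same route as the paper's proof: both handle \eqref{Pi_approx_eq1} by inserting the gradient of the convex-hull projection and invoking Lemma \ref{lem_proj}, and both split the boundary term via the same intermediate polynomial, estimating the non-polynomial residual with the trace inequality enabled by (\textbf{G1}) of Lemma \ref{lem_tet_maxangle} and the polynomial difference with Lemma \ref{lem_traceinequa} combined with Lemma \ref{lem_proj}. The only difference is cosmetic: you spell out explicitly the domination of the stabilization term by the boundary gradient and the finite-overlap summation over elements, both of which the paper leaves implicit.
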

\begin{proof}
Note that \eqref{Pi_approx_eq1} is simple by inserting $\nabla \Pi_{\conv(K)} u$ and using Lemma \ref{lem_proj}. 
For \eqref{Pi_approx_eq2}, we only need to estimate $\| \nabla(u-\Pi_K u) \|_{0,\partial K}$. 
The triangular inequality yields
\begin{equation}
\label{lem_assump_verify2_eq1}
\| \nabla(u-\Pi_K u) \|_{0,\partial K} \lesssim \| \nabla(u-\Pi_{\conv(K)} u) \|_{0,\partial K} + \| \nabla(\Pi_{\conv(K)} u - \Pi_K u )  \|_{0,\partial K}.
\end{equation}
The estimate of the first term on the right-hand side follows from the trace inequality with (\textbf{G1}) in Lemma \ref{lem_tet_maxangle}, 
while the estimate of the second term follows from Lemmas \ref{lem_traceinequa} and \ref{lem_proj} by inserting $\nabla u$.
\end{proof}

At last, we show \eqref{L2_assump}. 
Note that one cannot directly apply the Poincar\'e inequality here, such as (2.15) in \cite{Brenner;Sung:2018Virtual}, since elements are not shape regular. 
\begin{lemma}
\label{lem_assump_verify3}
Under Assumptions \hyperref[asp:A1]{(A1)}-\hyperref[asp:A3]{(A3)}, Hypothesis \hyperref[asp:H5]{(H5)} holds.
\end{lemma}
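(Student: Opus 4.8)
The plan is to bound the two contributions in \eqref{L2_assump} separately, establishing the per-element estimates $\|u-\Pi_K u\|_{0,K}\lesssim h_K^2|u|_{2,\conv(K)}$ and $h_K^{1/2}\|u-u_I\|_{0,\partial K}\lesssim h_K^2\|u\|_{2,\conv(K)}$, and then summing over $K$. The summation uses the finite-overlap property following from Assumption \hyperref[asp:A3]{(A3)}: since each $\conv(K)$ is covered by at most $r$ mesh elements and carries diameter $h_K$, the regions $\{\conv(K)\}$ have uniformly bounded overlap, so $\sum_K|u|^2_{2,\conv(K)}\lesssim\|u\|^2_{2,\Omega}$. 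The boundary term is then immediate: squaring \eqref{lem_uI_face_eq01} of Lemma \ref{lem_uI_face} gives $h_K\|u-u_I\|^2_{0,\partial K}\lesssim h_K^4\|u\|^2_{2,\conv(K)}$, and the overlap bound finishes it.

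The real work is the volume term $\sum_K\|u-\Pi_K u\|^2_{0,K}$. I would insert the $L^2(\conv(K))$-projection and split $u-\Pi_K u=(u-\mathrm{P}^1_{\conv(K)}u)+q$, where $q:=\mathrm{P}^1_{\conv(K)}u-\Pi_K u\in\mathcal{P}_1(K)$. The first piece is controlled directly by Lemma \ref{lem_proj}, namely $\|u-\mathrm{P}^1_{\conv(K)}u\|_{0,K}\le\|u-\mathrm{P}^1_{\conv(K)}u\|_{0,\conv(K)}\lesssim h_K^2|u|_{2,\conv(K)}$. For the linear remainder $q$ I would write $q=(q-\bar q^K)+\bar q^K$, separating its mean-zero part from its volume average $\bar q^K$. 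Since $q$ is affine, $\|q-\bar q^K\|_{0,K}\le h_K\|\nabla q\|_{0,K}$ holds exactly, and $\|\nabla q\|_{0,K}\lesssim h_K|u|_{2,\conv(K)}$ follows by inserting $\nabla u$ and invoking Lemma \ref{lem_proj} together with the gradient bound \eqref{Pi_approx_eq1} already proved in Lemma \ref{lem_assump_verify2}. Thus $\|q-\bar q^K\|_{0,K}\lesssim h_K^2|u|_{2,\conv(K)}$.

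The delicate point, and the main obstacle, is pinning the constant $\bar q^K$: the projection $\Pi_K$ is normalized by the boundary average $\int_{\partial K}\Pi_K u=\int_{\partial K}u$ over the possibly very irregular surface $\partial K$, not by a volume average, and because $K$ need not be star-convex no clean Poincaré inequality on $K$ is available to handle the zeroth-order mode. I would therefore route through the surface average $\bar q^{\partial K}$. The normalization gives $\bar q^{\partial K}=\frac{1}{|\partial K|}\int_{\partial K}(\mathrm{P}^1_{\conv(K)}u-u)$, whose size I would bound by the trace of the projection error $\|u-\mathrm{P}^1_{\conv(K)}u\|_{0,\partial K}\lesssim h_K^{3/2}|u|_{2,\conv(K)}$ (exactly the estimate already established in \eqref{lem_uI_face_eq2}, summed over the boundedly many triangles of \hyperref[asp:A1]{(A1)}), together with $|\partial K|\simeq h_K^2$ (boundedly many faces of diameter $\le h_K$ by \hyperref[asp:A1]{(A1)}, while $\partial K$ encloses the ball $B_K$ so its area is $\gtrsim h_K^2$). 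This yields $|\bar q^{\partial K}|\lesssim h_K^{1/2}|u|_{2,\conv(K)}$. Finally, for an affine function the volume and surface averages differ by at most $h_K|\nabla q|$, and $|K|\simeq|\conv(K)|\simeq h_K^3$ by \hyperref[asp:A3]{(A3)}, so $|K|^{1/2}|\bar q^K|\lesssim h_K^2|u|_{2,\conv(K)}$. Collecting the three bounds gives $\|u-\Pi_K u\|_{0,K}\lesssim h_K^2|u|_{2,\conv(K)}$; squaring and summing with the finite overlap of $\{\conv(K)\}$ then delivers $\sum_K\|u-\Pi_K u\|^2_{0,K}\lesssim h^4\|u\|^2_{2,\Omega}$, which combined with the boundary term proves \hyperref[asp:H5]{(H5)}.
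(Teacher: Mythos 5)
Your proposal is correct and follows essentially the same route as the paper's proof: both treat the boundary term via Lemma \ref{lem_uI_face}, and for the volume term both insert $\mathrm{P}^1_{\conv(K)}u$, bound the gradient part of the affine remainder by $h_K$ times a first-order gradient error, and pin down its constant part by exploiting the boundary-average normalization of $\Pi_K$ together with the trace inequality on $\conv(K)$. The only differences are cosmetic: the paper writes the remainder about the centroid of $\partial K$ (so the constant is exactly $\mathrm{P}^0_{\partial K}v$ with $v=\mathrm{P}^1_{\conv(K)}u-u$) and invokes the isoperimetric inequality $|K|\lesssim|\partial K|^{3/2}$, whereas you route through the volume average and use $|\partial K|\simeq h_K^2$, $|K|\simeq h_K^3$ directly.
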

\begin{proof}
The second term in \eqref{L2_assump} follows from Lemma \ref{lem_uI_face}. 
We focus on the first term. 
Given any $z\in H^1(K)$, we may write $\Pi_K z = (\mathrm{P}_K^0\nabla z) \cdot(\bfx - \bfx_0) +\mathrm{P}^0_{\partial K} z $ 
where $\bfx_0$ is chosen such that $(\bfx - \bfx_0,1)_{\partial K} = \mathbf{ 0}$. 
We have $\| \bfx - \bfx_0 \| \lesssim h_K$, $\forall\bfx\in K$. Then,  letting $v = \mathrm{P}^1_{\conv(K)} u  - u$, we write
\begin{equation}
\begin{split}
\label{lem_assump_verify3_1}
\| u - \Pi_K u \|_{0,K} & \le \| u - \mathrm{P}^1_{\conv(K)} u \|_{0,K} + \| \mathrm{P}^1_{\conv(K)} u  - \Pi_K u \|_{0,K} \\
& \le \| u - \mathrm{P}^1_{\conv(K)} u \|_{0,K} + \|  (\mathrm{P}_K^0\nabla v) \cdot(\bfx - \bfx_0) \|_{0,K} + \| \mathrm{P}^0_{\partial K} v  \|_{0,K}.
\end{split}
\end{equation}
 The estimates of the first two terms follow from Lemma \ref{lem_proj}. 
For the last term, the isoperimetric inequality $|K|\lesssim |\partial K|^{3/2}$, 
the trace inequality with (\textbf{G1}) in Lemma \ref{lem_tet_maxangle} and Lemma \ref{lem_proj} imply
\begin{equation}
\begin{split}
\label{lem_assump_verify3_2}
\| \mathrm{P}^0_{\partial K} v  \|_{0,K}  & = | K|^{1/2}/|\partial K| (v ,1)_{\partial K}
\le | K|^{1/2}/|\partial K|^{1/2} \| v  \|_{0,\partial K} \\
& \lesssim |\partial K|^{1/4} ( h^{-1/2}_K \| v  \|_{0, \conv(K)} + h^{1/2}_K | v  |_{1, \conv(K)}  ) \lesssim h^2_K \| u \|_{2,\conv(K)}.
\end{split}
\end{equation}
Putting \eqref{lem_assump_verify3_2} into \eqref{lem_assump_verify3_1} finishes the proof.
\end{proof}

As a conclusion, under Assumptions \hyperref[asp:A1]{(A1)}-\hyperref[asp:A3]{(A3)}, Theorems \ref{lem_err_eqn} and \ref{thm_u} give the desired estimate.


\section{Application II: a special class of shrinking elements cut from cuboids}
\label{sec:fitted _2}

In this section, we discuss a special class of elements that are cut from cuboids by an arbitrary plane, 
and the inscribed balls of these elements can be accordingly arbitrarily small, as shown in Figure \ref{fig:cub}.
\begin{itemize}
    \item[(\textbf{A4})]  \label{asp:A4} 
    Elements $K$ are cut from cuboids by plane. 
\end{itemize}
Clearly, these elements do not satisfy Assumption \hyperref[asp:A3]{(A3)} and may even shrink to a flat plane or a segment. 
Denote $R$ by the cuboid with the size $h_R$. For an element $K$ cut from $R$, we have $h_K \le  h_R$.


For the considered special class of elements, by rotation, there are three types of elements highlighted by the red solids in Figure \ref{fig:cub}. By \cite[Proposition 3.2]{2017ChenWeiWen}, all these elements have a boundary triangulation satisfying Assumptions \hyperref[asp:A1]{(A1)} with the maximum angle $144^{\circ}$. In addition, the first case in Figure \ref{fig:cub} satisfies Assumption \hyperref[asp:A2]{(A2)}, while the second and third cases satisfy \hyperref[asp:A2plus]{(A2')}.
According to \cite{2000Pflaum}, generating a triangulation for such elements satisfying the maximum angle condition is possible, but the resulting triangulation may not satisfy the delaunay property on element faces; namely the triangulation is non-conforming for elements to elements.
Thus, only requiring the boundary triangulation will greatly simplify the computation \cite{2017ChenWeiWen}. 

We follow the setup of the previous section by setting $\beta=\beta_h=1$ and $\mathcal{W}_h(K)=\mathcal{P}_1(K)$. 
Similarly, we still only need to examine Hypotheses \hyperref[asp:H3]{(H3)}-\hyperref[asp:H5]{(H5)}.
Note that one cannot apply the trace inequalities including \eqref{lem_traceinequa}, say on the face $A_1D_1D_3A_5$ of the left plot in Figure \ref{fig:cub}, towards the element, 
which is the main difficulty. We will see that the involved analysis techniques are completely different from the prevision section and those in the literature.


To avoid redundancy, in the following discussion, we focus on the highlighted element in the left plot in Figure \ref{fig:cub} 
which is a triangular prism and may shrink to the segment $A_1A_5$ and thus, from the perspective of analysis, is the most challenging case.  

\begin{figure}[h]
  \centering
  \includegraphics[width=1.5in]{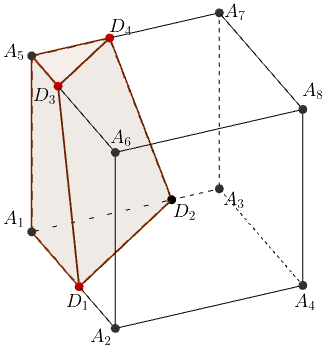}
  \includegraphics[width=1.5in]{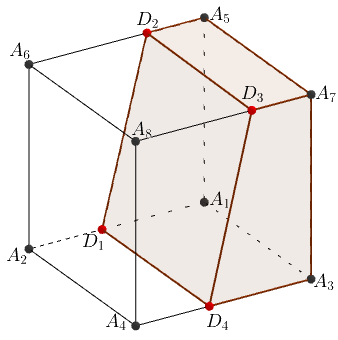}
  \includegraphics[width=1.5in]{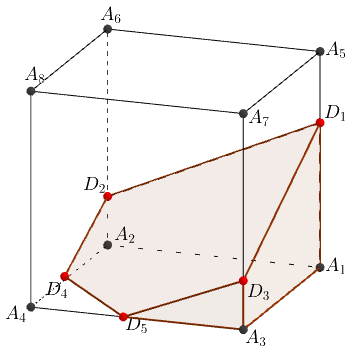}
  \caption{Cases 1-3 from left to right: elements cut from cuboids, highlighted as the red solids, may not contain non-shrinking inscribed balls. 
  The left one may shrink to the segment $A_1A_5$. 
  The middle one and the right one may shrink to the plane $A_1A_3A_7A_5$ and $A_1A_2A_3A_4$. 
  They all satisfy Assumption \hyperref[asp:A1]{(A1)}. Case 1 satisfies Assumption \hyperref[asp:A2]{(A2)}, while Cases 1 and 3 satisfy Assumption \hyperref[asp:A2plus]{(A2')}.
  }
  \label{fig:cub}
\end{figure}


We first present the following lemma which will be frequently used.
\begin{lemma}
\label{lem_edge_est}
Let $u\in H^2(R)$ with $K\subseteq R$. Given each edge $e$ of $K$ with the unit directional vector $\bft_e$, there holds that
\begin{equation}
\label{lem_edge_est_eq0}
\abs{ \int_K \nabla(u-u_I)\cdot\bft_e  \dd \bfx } \lesssim h^{1/2}_K |F|^{-1/2} |K| \| u \|_{2,R},
\end{equation}
where $F$ is any face containing $e$.
\end{lemma}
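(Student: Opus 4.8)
The plan is to move the entire estimate onto $\partial K$ by a single integration by parts, which trades the anisotropic volume quantity for boundary data, where $u_I$ is explicitly the nodal interpolant $I_{\partial K}u$. This uses Green's formula in its always-valid direction and therefore sidesteps the face-to-element trace inequality \eqref{lem_traceinequa}, which fails for these thin elements. Since $\bft_e$ is constant, $\nabla(u-u_I)\cdot\bft_e=\nabla\cdot\big((u-u_I)\bft_e\big)$, and the divergence theorem on the Lipschitz domain $K$ gives
\[
\int_K \nabla(u-u_I)\cdot\bft_e\dd\bfx=\sum_{F'\in\mathcal{F}_K}\int_{F'}(u-u_I)\,(\bft_e\cdot\bfn_{F'})\dd s .
\]
The decisive observation is that $\bft_e\cdot\bfn_{F'}=0$ whenever $\bft_e$ lies in the plane of $F'$, in particular for every face containing $e$ (hence for $F$ itself). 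Thus only the transversal faces survive, for which $|\bft_e\cdot\bfn_{F'}|\le 1$ and whose number is uniformly bounded by Assumption~\hyperref[asp:A1]{(A1)}.

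On each surviving face I would subtract the trace of a single linear polynomial $p:=\mathrm{P}^1_{\conv(K)}u$. Because $p$ is globally affine and each boundary triangle is planar, $I_{\partial K}p=p$ on $\partial K$, so that $u-u_I=(u-p)-I_{\partial K}(u-p)$ there. Writing $g:=u-p$, which satisfies $|g|_{2,\cdot}=|u|_{2,\cdot}$, each transversal contribution splits as $\int_{F'}g\,(\bft_e\cdot\bfn_{F'})$ minus $\int_{F'}I_{\partial K}g\,(\bft_e\cdot\bfn_{F'})$, both of which I would bound by $|F'|^{1/2}$ times an $L^2(F')$ norm via Cauchy--Schwarz.

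The two $L^2(F')$ norms are then estimated intrinsically on the face, never tracing back into the thin element. For $\|g\|_{0,F'}$ I use the projection estimate of Lemma~\ref{lem_proj} together with the shape-regular auxiliary region furnished by \textbf{(G1)} in Lemma~\ref{lem_tet_maxangle}, exactly as in \eqref{lem_uI_face_eq2}. For $\|I_{\partial K}g\|_{0,F'}$ I reduce to the edges via Lemma~\ref{lem_grad}, control the resulting one-dimensional interpolant through Lemma~\ref{lem_interp_1D}, and recover the missing regularity using the auxiliary trapezoid and pyramid of \textbf{(G2)}--\textbf{(G3)}, mirroring \eqref{lem_uI_face_eq6}--\eqref{lem_uI_face_eq7}. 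Assembling these bounds with the elementary geometric relations for elements cut from a cuboid, which tie the transversal-face areas $|F'|$ and the reference area $|F|$ to the volume $|K|$ and the diameter $h_K$, is what produces the precise normalising factor $h_K^{1/2}|F|^{-1/2}|K|$.

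The step I expect to be the true obstacle is this last one. The trace $u|_{F'}$ carries only $H^{3/2}$ regularity, so the classical maximum-angle interpolation estimate \eqref{rem_interp_maxangle_eq1} of Remark~\ref{rem_interp_maxangle} is unavailable, and the element may be arbitrarily thin, so no shape-regular ball of radius $\mathcal{O}(h_K)$ sits inside $K$ to borrow the third dimension from. Obtaining a clean $\mathcal{O}(h_K^{3/2})$ face interpolation bound while simultaneously tracking the anisotropic area factors, so that nothing worse than $|F|^{-1/2}|K|$ survives, is the delicate part; it is precisely where the maximum-angle geometric lemmas, rather than any affine map to a reference element, must carry the argument.
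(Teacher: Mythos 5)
Your global integration by parts is valid, and the observation that faces containing (or parallel to) $\bft_e$ drop out is correct, but the estimate this strategy leads to is too weak by an unbounded factor, so the proof does not close. After Cauchy--Schwarz you need $\sum_{F'\,\mathrm{transversal}}|F'|^{1/2}\|u-u_I\|_{0,F'}\lesssim h_K^{1/2}|F|^{-1/2}|K|\,\|u\|_{2,R}$, and the face estimates you plan to use (mirroring \eqref{lem_uI_face_eq2}--\eqref{lem_uI_face_eq7}, i.e.\ Lemma \ref{lem_uI_face}) give $\|u-u_I\|_{0,F'}\lesssim h_K^{3/2}\|u\|_{2,R}$, isotropic in $h_K$. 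Test this on the prism in the left plot of Figure \ref{fig:cub}: put $|D_1A_1|=a$, $|D_1D_2|=b$, $|D_1D_3|=h_K$, so $|K|\simeq abh_K$. For $e=D_1D_3$ and $F=D_1D_2D_4D_3$ (so $|F|\simeq bh_K$), the right-hand side of \eqref{lem_edge_est_eq0} is $\simeq ab^{1/2}h_K\|u\|_{2,R}$, while the surviving transversal faces are the two triangles of area $\simeq ab$, so your bound is $\simeq (ab)^{1/2}h_K^{3/2}\|u\|_{2,R}$: it exceeds the target by $(h_K/a)^{1/2}\rightarrow\infty$ as the prism shrinks. No sharpening of the face interpolation estimate rescues this: in the regime $a\ll b\simeq h_K$ the transversal triangle has diameter $\simeq h_K$, and an anisotropic refinement of the interpolation bound would require $L^2$ traces of second derivatives of $u$ on the face, which do not exist for $u\in H^2(R)$ --- exactly the obstruction of Remark \ref{rem_interp_maxangle}. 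The structural reason is that on transversal faces the weight $\bft_e\cdot\bfn_{F'}$ is of size one, so the global divergence theorem retains no small geometric factor; the smallness of the element in the $a$ and $b$ directions is simply discarded.

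The paper's proof is built precisely to keep that smallness. Instead of one 3D integration by parts, it slices $K$ into cross-sections $\mathbb{T}(x_1)$ parallel to the chosen face $F$ and integrates by parts in 2D on each slice, using the linear potential $s_h$ with $\rot\, s_h=\bft_e$ in \eqref{lem_assump_verify4_eq2_1}. Two things are gained that your route cannot reproduce. First, what appears on $\partial\mathbb{T}(x_1)$ are tangential derivatives $\nabla(u-u_I)\cdot\bft$, which are bounded by $\|u\|_{2,R}$ (with no excess area factor) via the maximum-angle tools, Lemma \ref{lem_edge_max} and Lemma \ref{lem_interp_1D}, rather than function values of size $h_K^{3/2}$. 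Second, and decisively, $|s_h|$ is bounded pointwise by the height $l_{B_1B_2}$ of the slice, see \eqref{lem_assump_verify4_eq3}, so $\|s_h\|_{0,\partial\mathbb{T}(x_1)}\lesssim|\mathbb{T}(x_1)|h_K^{-1/2}$ by \eqref{lem_assump_verify4_eq4}; integrating the slice areas over $x_1$ produces $|K|h_K^{-1/2}$, which is exactly the anisotropic factor your estimate is missing. Note also that the shrinking-edge direction ($e=D_1A_1$) forces a further case distinction and a modified integration by parts (the terms $(III)$--$(V)$ in \eqref{lem_assump_verify4_eq12}), which your proposal does not address at all. So the difficulty is not the final bookkeeping you flagged; it is that the global-IBP-plus-Cauchy--Schwarz strategy destroys the structure that the cross-section argument is designed to exploit.
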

\begin{proof}
We consider the edge $e$ as $D_1D_3$ and $D_1A_1$ which are representative cases in that the first one does not shrink and the second one may shrink. Denote their directional vectors, respectively, by $\bft_1$ and $\bft_2$. 
In fact, for $e=D_1D_3$, we can take $F$ to be either one of the two faces $A_1D_1D_3A_5$ and $D_1D_2D_4D_3$ by symmetry. For $e=D_1A_1$, as $|A_1D_1D_3A_5|>|A_1D_1D_2|$, estimating \eqref{lem_edge_est_eq0} for $F=A_1D_1D_3A_5$ is enough.
Hence, without loss of generality, we let $F$ be the face $A_1D_1D_3A_5$.
To facilitate a clear presentation, we assume that $D_1$ is the origin, $\bft_2$ is the $x_2$ axis and the face $\triangle A_1D_1D_3$ is on the $x_2x_3$ plane, see Figure \ref{fig:crosssec1} for illustration. 
The rest of the discussion is a bit technical and lengthy, and thus we shall decompose it into several steps. We first consider the case of $D_1D_3$. 

\textit{Step 1. (Rewrite the volume integral as a boundary integral)} 
Now, let us estimate the volume integral in the right-hand side of \eqref{lem_edge_est_eq0} by rewriting it into the integral of cross-sections. We let $F$ be the face $A_1D_1D_3A_5$ and choose the cross-sections parallel to $F$. 
Let $\tilde{x}_1$ be the maximal height of $K$ from the $x_2x_3$ plane. 
Let $\mathbb{T}(x_1)$ be the cross-section at each $x_1$ perpendicular to the $x_1$ axis, and thus it is always parallel to the $A_1D_1D_3A_5$ plane. 
We can then rewrite the integral as
\begin{equation}
\label{lem_assump_verify4_eq2}
\int_K  \nabla (  u - u_I ) \cdot \bft_1 \dd \bfx = \int_{0}^{\tilde{x}_1} \left( \int_{\mathbb{T}(x_1)}  \nabla_{\mathbb{T}} (  u - u_I ) \cdot \bft_1 \dd x_2 \dd x_3 \right) \dd x_1,
\end{equation}
where $\nabla_{\mathbb{T}}$ is the surface gradient within the plane ${\mathbb{T}}(x_1)$. 
For a fixed $\mathbb{T}(x_1)$, without loss of generality, assume it is a quadrilateral denoted by $B_1B_2B_3B_4$, as shown in Figure \ref{fig:crosssec1}, 
where the edge $B_1B_2$ is parallel to $\bft_1$, i.e., $D_1D_3$. 
Note that the cross-sections may be triangular, but it does not affect the analysis below.
As $\bft_1$ is parallel to the $x_2x_3$ plane, it can be written as $\bft_1 = [t_{1,2},t_{1,3}]$ within this plane by dropping ``$0$'' in the first coordinate.
Let $s_h\in \mathcal{P}_1(\mathbb{T}(x_1))$ be sought such that $\rot(s_h) = \bft_1= [ t_{1,2}, t_{1,3} ]$, where $\rot = [\partial_{x_3}, -\partial_{x_2} ]$ is the 2D rotation operator. 
Specifically, we can write 
\begin{equation}
\label{lem_assump_verify4_eq2_1}
s_h(x_2,x_3) =  (x_3 -\bar{x}_3) t_{1,2} - (x_2 -\bar{x}_2) t_{1,3}, 
\end{equation}
where $(\bar{x}_2,\bar{x}_3)$ is the center of $B_1B_2B_3B_4$. With integration by parts, we obtain
\begin{equation}
\begin{split}
\label{lem_assump_verify4_eq5}
\abs{ \int_{\mathbb{T}(x_1)}  \nabla_{\mathbb{T}} ( u - u_I ) \cdot \bft_1 \dd x_2 \dd x_3 } & = \abs{ \int_{\partial\mathbb{T}(x_1)}  \left( \nabla_{\mathbb{T}} (  u - u_I ) \cdot \bft \right) s_h \dd s } \\
 & \le \underbrace{ \|\nabla ( u - u_I)\cdot\bft \|_{0,\partial \mathbb{T}(x_1)} }_{(I)}  \underbrace{\| s_h \|_{0,\partial \mathbb{T}(x_1)}}_{(II)}.
 \end{split}
\end{equation}

\textit{Step 2. (Estimation of the terms $(I)$ and $(II)$)} For $(I)$, there holds
\begin{equation}
\label{lem_assump_verify4_eq6}
\| \nabla ( u - u_I) \cdot \bft \|_{0,\partial \mathbb{T}(x_1)} \le \| (\nabla u - \mathrm{P}^0_R \nabla u )\cdot \bft \|_{0,\partial \mathbb{T}(x_1)}  + \|  (  \mathrm{P}^0_R \nabla u - \nabla u_I)\cdot \bft \|_{0,\partial \mathbb{T}(x_1)} .
\end{equation}
For the first term above, let $e$ be one edge of $ \mathbb{T}(x_1) = B_1B_2B_3B_4$, and we can always find a triangle, denoted by $f$, such that $e$ is one edge of $f$, 
and $f$ is shape regular in the sense of being star convex to a circle of the radius $\mathcal{O}(h_K)$. 
We can then find a shape regular polyhedron inside $R$ that has $f$ has its face. 
For example, if $e=B_1B_2$, we let $f = \triangle B_1B_2A_8$.
Then, using the similar argument to \eqref{lem_uI_face_eq7} with Lemma 2.1 in \cite{Brenner;Sung:2018Virtual} and the trace inequality, we have
\begin{equation}
\begin{split}
\label{lem_assump_verify4_eq7}
\| (\nabla u - \mathrm{P}^0_R \nabla u )\cdot \bft \|_{0,e} & \lesssim h^{-1/2}_K \| \nabla u - \mathrm{P}^0_R \nabla u \|_{0,f} + | \nabla u - \mathrm{P}^0_R \nabla u |_{1/2,f}  \lesssim  \| u \|_{2,R}.
\end{split}
\end{equation}
For the second term on the right-hand side of \eqref{lem_assump_verify4_eq6}, based on the face triangulation assumption, $\partial \mathbb{T}(x_1)$ is covered by several triangles,
and thus can be decomposed into a collection of edges.
Without loss of generality, we consider one edge $e = \triangle D_1D_2D_3 \cap \partial \mathbb{T}(x_1)$.
Due to the maximum angle condition, there exist two edges $e_1$ and $e_2$ of $\triangle D_1D_2D_3$ whose angle is bounded below and above.
By Lemma \ref{lem_edge_max} and the similar argument to \eqref{lem_uI_face_eq7}, we have
\begin{equation}
\label{lem_assump_verify4_eq8}
\|  (  \mathrm{P}^0_R \nabla u - \nabla u_I)\cdot \bft_e \|_{0,e} \lesssim \sum_{i=1,2} \|  (  \mathrm{P}^0_R \nabla u - \nabla u_I)\cdot \bft_{e_i} \|_{0,e_i} \lesssim \| u \|_{2,R}.
\end{equation}
Putting \eqref{lem_assump_verify4_eq7} and \eqref{lem_assump_verify4_eq8} into \eqref{lem_assump_verify4_eq6} and summing it over $e$, we arrive at
\begin{equation}
\label{lem_assump_verify4_eq8_1}
\| \nabla ( u - u_I) \cdot \bft \|_{0,\partial \mathbb{T}(x_1)} \lesssim \| u \|_{2,R}.
\end{equation}

As for $(II)$ in \eqref{lem_assump_verify4_eq5}, 
noticing that $\mathbb{T}(x_1)= B_1B_2B_3B_4$ is convex, $\forall (x_2,x_3)\in \partial \mathbb{T}(x_1)$, there holds 
\begin{equation}
\label{lem_assump_verify4_eq3}
| s_h (x_2,x_3)| = | (x_2 -\bar{x}_2, x_3 -\bar{x}_3)\cdot \bfn_{B_1B_2} |  \le l_{B_1B_2},
\end{equation}
where $\bfn_{B_1B_2}$ is the normal vector to the edge $B_1B_2$ within the plane $\mathbb{T}(x_1)$, and $l_{B_1B_2}$ is the height of the edge $B_1B_2$ towards $\mathbb{T}(x_1)$, see Figure \ref{fig:quad} for illustration. 
Using geometry and \eqref{lem_assump_verify4_eq3}, we have
\begin{equation}
\label{lem_assump_verify4_eq4}
\| s_h \|_{0,\partial \mathbb{T}(x_1) } \lesssim | \partial \mathbb{T}(x_1) |^{1/2} l_{B_1B_2} \lesssim h^{1/2}_K l_{B_1B_2} \lesssim | \mathbb{T}(x_1)| h^{-1/2}_K.
\end{equation}
We point out that the key of \eqref{lem_assump_verify4_eq4} is to get the bound in terms of the possibly shrinking term, $l_{B_1B_2}$, in the second inequality, 
Now, putting \eqref{lem_assump_verify4_eq8_1} and \eqref{lem_assump_verify4_eq4} into \eqref{lem_assump_verify4_eq5} and \eqref{lem_assump_verify4_eq2}, we have
\begin{equation}
\label{lem_assump_verify4_eq9}
\int_K  \nabla (  u - u_I ) \cdot \bft_1 \dd \bfx \lesssim \int_0^{\tilde{x}_1} h^{-1/2}_K \| u \|_{2,R} | \mathbb{T}(x_1)| \dd x_1 \lesssim h^{-1/2}_K |K| \| u \|_{2,R},
\end{equation}
which yields \eqref{lem_edge_est_eq0} as $|F|\lesssim h^2_K$.

\textit{In the next stage, we consider the case of $D_1A_1$ and $\bft_2$, and the proof needs to be slightly modified.}

In this case, we consider the cross-sections parallel to $ A_1D_1D_2$ which are thus perpendicular to the $x_3$ axis. 
So each such cross-section can be described by $\mathbb{T}(x_3)$ denoted as $\triangle C_1C_2C_3$. 
In this case, $C_1C_2$ is always parallel to $\bft_2$. See the right plot in Figure \ref{fig:crosssec1} for illustration.
Then, the volume integral becomes 
\begin{equation}
\label{lem_assump_verify4_eq9_1}
\int_K \nabla(u-u_I)\cdot \bft_2 \dd \bfx = \int_0^{\tilde{x}_3} \left( \int_{\mathbb{T}(x_3)} \nabla(u-u_I)\cdot \bft_2 \dd x_1 \dd x_2 \right) \dd x_3.
\end{equation}
Under the same coordinate system, as $\bft_2$ is parallel to the $x_1x_2$ plane, it can be written as $\bft_2 = (t_{2,1},t_{2,2})$ by dropping $0$ in the third coordinate.

Without loss of generality, we assume $h_{D_1A_1}\ge h_{D_3A_5}$. Note that $C_1C_2$ is always perpendicular to $C_1C_3$. We need to consider two cases.
If $h_{A_1D_1} \ge h_{A_1D_2}$, i.e., $h_{C_1C_2}\ge h_{C_1C_3}$,  then we let $\tilde{s}_h(x_1,x_2) =  (x_2 -\bar{x}_2) t_{2,1} - (x_1 -\bar{x}_1) t_{2,2}$ be
constructed similar to \eqref{lem_assump_verify4_eq2_1} which still leads to $|\tilde{s}_h(x_1,x_2)|\le h_{C_1C_3}$, $\forall(x_1,x_2)\in \partial \mathbb{T}(x_3)$, see the middle plot in Figure \ref{fig:quad} for illustration. 
The argument above is applicable. 

For $h_{A_1D_1} \le h_{A_1D_2}$, i.e., $h_{C_1C_2}\le h_{C_1C_3}$, one can see that \eqref{lem_assump_verify4_eq4} does not hold anymore as it is possible $h_{C_1C_2} / h_{C_1C_3} \rightarrow 0$.
Then, the argument is modified by changing the integration-by-parts strategy. 
Taking $\tilde{s}_h = ((x_1,x_2)-(\bar{x}_1,\bar{x}_2))\cdot\bft_2$, i.e., $\nabla_{\mathbb{T}} \tilde{s}_h = \bft_2$, 
and then we conclude $| \tilde{s}_h(x_2,x_3)|\le h_{C_1C_2}$. As $h_{C_1C_2}\le h_{C_1C_3}$, we have
\begin{equation}
\label{lem_assump_verify4_eq5_1}
\| \tilde{s}_h \|_{0, \partial \mathbb{T}(x_3)  } \lesssim | \partial \mathbb{T}(x_3)|^{1/2} h_{C_1C_2}  \lesssim | \mathbb{T}(x_3)|^{1/2} h^{1/2}_{C_1C_2}.
\end{equation}
Inserting $ \nabla_{\mathbb{T}} \mathrm{P}^1_R u$ into the integral on the cross-section in \eqref{lem_assump_verify4_eq9_1}, we obtain
\begin{equation}
\begin{split}
\label{lem_assump_verify4_eq12}
& \abs{ \int_{\mathbb{T}(x_3)}  \nabla_{\mathbb{T}} ( u - u_I ) \cdot \bft_2 \dd x_1 \dd x_2 }  \le \underbrace{ \abs{ \int_{\mathbb{T}(x_3)}  ( \nabla_{\mathbb{T}} \mathrm{P}^1_R u - \nabla_{\mathbb{T}}  u_I ) \cdot \bft_2  \dd x_1 \dd x_2 } }_{(III)} \\
 + & \underbrace{ \abs{ \int_{\partial\mathbb{T}(x_3)}  \left( ( \nabla_{\mathbb{T}}   u - \nabla_{\mathbb{T}} \mathrm{P}^1_R u ) \cdot \bfn_{\partial\mathbb{T}} \right) \tilde{s}_h \dd s } }_{(IV)} + \underbrace{ \abs{ \int_{\mathbb{T}(x_3)}    \Delta_{\mathbb{T}} ( \mathrm{P}^1_R u - u )   \tilde{s}_h \dd x_1 \dd x_2 } }_{(V)},\\
 \end{split}
\end{equation}
where the terms $(IV)$ and $(V)$ come from integration by parts (for $\nabla$ operator, not the $\rot$ operator). 
The estimate of $(IV)$ is similar to \eqref{lem_assump_verify4_eq7} and \eqref{lem_assump_verify4_eq3}:
we first conclude $\| ( \nabla_{\mathbb{T}}   u - \nabla_{\mathbb{T}} \mathrm{P}^1_R u ) \cdot \bfn_{\partial\mathbb{T}} \|_{\partial \mathbb{T}(x_1)} \lesssim \| u \|_{2,R}$, 
and then using \eqref{lem_assump_verify4_eq5_1} and $\tilde{x}_3 \le h_K$, we obtain
\begin{equation}
\label{lem_assump_verify4_eq13}
|F|^{1/2}/|K|\int_0^{\tilde{x}_3}(IV) \dd x_3 \lesssim (|F|^{1/2}/|K|) |K|^{1/2} h^{1/2}_K h^{1/2}_{A_1D_1}  \| u \|_{2,R} \lesssim h^{1/2}_K  \| u \|_{2,R},
\end{equation}
where, by the assumption of $h_{A_1D_1} \le h_{A_1D_2}$, we have applied $h_{C_1C_2} \le h_{A_1D_1} \le h_{A_1D_2}$ and $|F| h_{A_1D_1} \le |F| h_{A_1D_2} \le |K|$.
For $(V)$, using $|s_h(x_2,x_3)|\le  h_{C_1C_2}$ and $|F| h_{C_1C_3} \le |F| h_{A_1D_2} \le |K|$ again, we have
\begin{equation}
\begin{split}
\label{lem_assump_verify4_eq14}
|F|^{1/2}/|K|\int_0^{\tilde{x}_3}(V) \dd x_3 & \lesssim |F|^{1/2}/|K| h_{C_1C_2} \int_0^{\tilde{x}_3}  \int_{\mathbb{T}(x_3)} \abs{ \Delta_{\mathbb{T}}  u  } \dd x_1 \dd x_2 \dd x_3 \\
& \lesssim |F|^{1/2}/|K| h_{C_1C_3} |K|^{1/2} | u |_{2,K} \lesssim h^{1/2}_K  \| u \|_{2,R} .
\end{split}
\end{equation} 
In addition, for $(III)$, we notice
\begin{equation}
\label{lem_assump_verify4_eq15}
(III) = | \mathbb{T}(x_3) | \abs{ ( \nabla_{\mathbb{T}} \mathrm{P}^1_R u - \nabla_{\mathbb{T}}  u_I ) \cdot \bft_2 } =  | \mathbb{T}(x_3) |^{1/2} h^{1/2}_{C_1C_3} \| ( \nabla_{\mathbb{T}} \mathrm{P}^1_R u - \nabla_{\mathbb{T}}  u_I ) \cdot \bft_2 \|_{0,C_1C_2}.
\end{equation}
Note that the estimation of $\| ( \nabla_{\mathbb{T}} \mathrm{P}^1_R u - \nabla_{\mathbb{T}}  u_I ) \cdot \bft_2 \|_{0,C_1C_2}$ is similar to \eqref{lem_assump_verify4_eq8_1}. 
Thus, by the geometrical inequalities $| \tilde{x}_3 | \le h_K$, $|F| \le h_{A_1D_1} h_K$, $ | \mathbb{T}(x_3) | \le h_{C_1C_2} h_{C_1C_3} \le h_{A_1D_1} h_{A_1D_2}$ and $h_{A_1D_1}h_{A_1D_2}h_K \lesssim |K|$, we arrive at
\begin{equation}
\label{lem_assump_verify4_eq16}
|F|^{1/2}/|K|\int_0^{\tilde{x}_3}(III) \dd x_3 \lesssim ( h^2_{A_1D_1} h^2_{A_1D_2} h_K )^{1/2} h_K /|K|   \| u \|_{2,R} \lesssim h^{1/2}_K \| u \|_{2,R}.
\end{equation}
Combining \eqref{lem_assump_verify4_eq13}, \eqref{lem_assump_verify4_eq14} and \eqref{lem_assump_verify4_eq16} finishes the proof of the case of $\bft_2$.
\end{proof}


\begin{lemma}
\label{lem_cylin_trace}
Let $u\in H^1(R)$ with $K\subseteq R$. Under Assumptions \hyperref[asp:A1]{(A1)} and \hyperref[asp:A4]{(A4)}, $\forall F\in \mathcal{F}_K$,
\begin{equation}
\label{lem_cylin_trace_eq0}
\| u - \mathrm{P}_K^0 u \|_{0,F} \lesssim h^{1/2}_K \| \nabla u \|_{0,R}.
\end{equation}
\end{lemma}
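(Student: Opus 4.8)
The plan is to split $u-\mathrm{P}^0_K u$ on $F$ into a ``trace part'' $u-\mathrm{P}^0_F u$, measuring the oscillation of $u$ on the face, and a ``constant part'' $\mathrm{P}^0_F u-\mathrm{P}^0_K u$, measuring how far the face average sits from the element average, where $\mathrm{P}^0_F u=|F|^{-1}\int_F u\,\dd s$. Writing $u-\mathrm{P}^0_K u=(u-\mathrm{P}^0_F u)+(\mathrm{P}^0_F u-\mathrm{P}^0_K u)$ and using that $\mathrm{P}^0_F u$ is the $L^2(F)$-best constant, it suffices to establish $\|u-\mathrm{P}^0_F u\|_{0,F}\lesssim h_K^{1/2}\|\nabla u\|_{0,R}$ and $|F|^{1/2}|\mathrm{P}^0_F u-\mathrm{P}^0_K u|\lesssim h_K^{1/2}\|\nabla u\|_{0,R}$.

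First I would realize the ``cylinder'' suggested by the name of the lemma. Because $K$ is cut from the cuboid $R$ by a single plane (Assumption \hyperref[asp:A4]{(A4)}) and every face $F\in\mathcal{F}_K$ is a convex planar polygon lying either on a face of $R$ or on the cutting plane, one can extrude $F$ into $R$ along a fixed direction $\bfd$ transversal to $F$ (say its inner normal) to obtain a convex prism $C=\{y+s\bfd:y\in F,\ 0\le s\le L\}\subseteq R$ with height and diameter $L\simeq h_K$; the cuboid geometry is exactly what guarantees such a non-degenerate $C$ with $C\subseteq R$ exists. For the trace part I would slice $C$ by the translates of $F$ and apply the one-dimensional fundamental theorem of calculus fiber-by-fiber (each fiber has the full length $L$, so there is no thin-section degeneracy) to get the cylinder trace inequality $\|u-c\|_{0,F}^2\lesssim L^{-1}\|u-c\|_{0,C}^2+L\|\nabla u\|_{0,C}^2$ for every constant $c$. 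Choosing $c=\mathrm{P}^0_C u$ and invoking the Poincar\'e inequality on the convex set $C$ (whose constant is $\lesssim\operatorname{diam}(C)\simeq h_K$) gives $\|u-\mathrm{P}^0_C u\|_{0,F}\lesssim h_K^{1/2}\|\nabla u\|_{0,C}\le h_K^{1/2}\|\nabla u\|_{0,R}$, and hence the same bound for $\|u-\mathrm{P}^0_F u\|_{0,F}$.

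The constant part is the delicate step, and it is where the possible shrinking of $K$ must be confronted. The naive comparison $|\mathrm{P}^0_F u-\mathrm{P}^0_K u|\le|K|^{-1/2}\|u-\mathrm{P}^0_F u\|_{0,K}$ followed by Poincar\'e produces the factor $|F|^{1/2}h_K/|K|^{1/2}$, which blows up precisely for the thin prism of Figure \ref{fig:cub} (there $|F|h_K\gg|K|$); a crude volume estimate is therefore inadmissible. Instead I would exploit that, $K$ being convex, it is a graph region over $F$ along the very same direction $\bfd$: for $y\in F$ the fiber $\{y+s\bfd\}\cap K$ is an interval $[0,\phi(y)]$ whose length is bounded by the thickness $\delta:=\max_y\phi(y)\le h_K$ of $K$ in that direction. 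Expanding $\int_0^{\phi(y)}u(y+s\bfd)\,\dd s$ by the fundamental theorem of calculus and integrating over $F$ expresses $\mathrm{P}^0_K u$ as a $\phi$-weighted average of the trace $u|_F$ plus a remainder controlled by $\partial_{\bfd}u$; the crucial point is that every occurrence of a fiber length is estimated by the small thickness $\delta$ rather than by $h_K$, and it is this extra power of $\delta$ that cancels the small volume $|K|$ in the denominators. Combined with the trace-part bound, this yields $|F|^{1/2}|\mathrm{P}^0_F u-\mathrm{P}^0_K u|\lesssim h_K^{1/2}\|\nabla u\|_{0,R}$.

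I expect the main obstacle to be exactly this constant part. One must verify that the purely geometric ratios produced by the fiber argument, chiefly $\delta\,|F|/|K|$ and $\max_y\phi/\operatorname{avg}_y\phi$, remain uniformly bounded, and that the graph/extrusion construction can be carried out simultaneously for every face type (the axis-aligned faces inherited from $\partial R$ as well as the slanted cutting face), using only Assumptions \hyperref[asp:A1]{(A1)} and \hyperref[asp:A4]{(A4)}. Once these bounded-ratio facts are secured from the cuboid structure, the two parts combine through the triangle inequality to give \eqref{lem_cylin_trace_eq0}.
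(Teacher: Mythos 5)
Your decomposition $u-\mathrm{P}^0_Ku=(u-\mathrm{P}^0_Fu)+(\mathrm{P}^0_Fu-\mathrm{P}^0_Ku)$ and the subsequent bookkeeping are exactly the paper's (compare \eqref{lem_cylin_trace_eq1} and \eqref{lem_cylin_trace_eq3}), and your fiber-by-fiber fundamental-theorem-of-calculus computation for the constant part is in substance a reproof of the paper's Lemma~\ref{lem_poin_anis}. The gaps are in the two geometric constructions on which each half of your argument rests, and both fail already for fat, non-degenerate elements allowed by Assumption \hyperref[asp:A4]{(A4)}.

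First, the extruded prism $C=\{y+s\bfd:\ y\in F,\ 0\le s\le L\}\subseteq R$ with $L\simeq h_K$ need not exist. Take $R=[0,1]^3$ cut by the plane $x+y=1$: both elements are triangular prisms with $h_K\simeq 1$, and the cut face $F$ has its two long edges lying on $\partial R$ (on the cuboid edges $\{x=1,y=0\}$ and $\{x=0,y=1\}$). A transversal direction $\bfd=(d_1,d_2,d_3)$ with $d_1+d_2>0$ keeps the fiber from $(1,0,z)$ inside $R$ only if $d_1\le 0$, $d_2\ge 0$, and keeps the fiber from $(0,1,z)$ inside $R$ only if $d_1\ge 0$, $d_2\le 0$; together these force $d_1=d_2=0$, i.e., $\bfd$ tangent to $F$. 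So \emph{no} extrusion of positive uniform height exists at all (and for the nearby cut $x+y=1-\epsilon$ the best achievable normal thickness is $\mathcal{O}(\epsilon)$ while $h_K\simeq 1$). This is precisely why the paper uses in \eqref{lem_cylin_trace_eq2} a pyramid $P_F$ with apex at the far cuboid vertex $A_8$: a cone of fibers converging to a single far apex tolerates edges of $F$ sitting on $\partial R$, and only the distance from the apex to the plane of $F$ enters the trace constant.

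Second, your assertion that ``$K$ being convex, it is a graph region over $F$ along $\bfd$'' is false: convexity makes each fiber $\{y+s\bfd\}\cap K$ an interval, but it does not make the union of fibers over $F$ cover $K$; equivalently, the projection of $K$ onto the plane of $F$ may spill outside $F$. For the cut $x+y=1-\epsilon$ of $[0,1]^3$, the larger (pentagonal-prism) element is not a graph over its cut face along that face's normal (the vertex $(1,0,z)$ projects to a point with negative $y$-coordinate), so the identity $\mathrm{P}^0_Ku=|K|^{-1}\int_F\int_0^{\phi(y)}u(y+s\bfd)\,\dd s\,\dd y$ misses part of $K$, and the missing part is not controlled by $\partial_{\bfd}u$. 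Incidentally, the ratio $\max\phi/\mathrm{avg}\,\phi$ that you flag as the main obstacle is \emph{not} the issue: for convex $K$ satisfying the graph property it is at most $3$ by a cone argument. The real issue is the graph/projection property itself, and the paper's proof exists to repair exactly that: it first applies the affine straightening \eqref{lem_cylin_trace_eq3_1}, whose matrix satisfies $\|A\|,\|A^{-1}\|\lesssim 1$ thanks to the bounded face and dihedral angles at the cuboid corner, mapping $K$ onto a reference prism whose projection onto $\hat{F}$ is exactly $\hat{F}$, and only then invokes Lemma~\ref{lem_poin_anis}. With (i) your cylinder replaced by the paper's pyramid and (ii) the graph property restored by such an affine map (and a case check over the remaining face/element types), your plan turns into the paper's proof; as written, both geometric claims are incorrect.
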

\begin{proof}
When $F= \triangle A_1D_1D_2$ or $\triangle A_5D_3D_4$, the trace inequality immediately yields the desired result.
So we only need to consider the other three faces whose height towards $K$ may shrink.  
Without loss of generality, we let $F$ be the face $D_1D_2D_3D_4$.
By the triangular inequality, we have
\begin{equation}
\label{lem_cylin_trace_eq1}
\| u - \mathrm{P}_K^0 u \|_{0,F}  \le \| u - \mathrm{P}_{F}^0 u \|_{0,F} + \| \mathrm{P}_{F}^0 u - \mathrm{P}_K^0 u \|_{0,F}.
\end{equation}
Let $P_F$ be the pyramid contained in $R$ that has $F$ as its base and $A_8$ as the apex, and it is easy to see that the height of $F$ is $\mathcal{O}(h_K)$.
Then, by the trace inequality, we obtain
\begin{equation}
\begin{split}
\label{lem_cylin_trace_eq2}
\| u - \mathrm{P}_{F}^0 u \|_{0,F} & \lesssim \| u - \mathrm{P}_{R}^0 u \|_{0,F}  \lesssim h^{-1/2}_K \| u - \mathrm{P}_{R}^0 u \|_{0,P_F}  + h^{1/2}_K | u - \mathrm{P}_{R}^0 u |_{1,P_F} \\
& \lesssim h^{-1/2}_K \| u - \mathrm{P}_{R}^0 u \|_{0,R}  + h^{1/2}_K | u  |_{0,R} \lesssim h^{1/2}_K |u|_{1,R}.
\end{split}
\end{equation}
For the second term in \eqref{lem_cylin_trace_eq1}, by the projection property we have
\begin{equation}
\label{lem_cylin_trace_eq3}
\| \mathrm{P}_{F}^0 u - \mathrm{P}_K^0 u \|_{0,F} = |F|^{1/2}/|K| \abs{ \int_K \mathrm{P}_{F}^0 u -u \dd \bfx } \le  |F|^{1/2}/|K| ^{1/2} \| u - \mathrm{P}_{F}^0 u \|_{0,K}.
\end{equation}
Let $l_F$ be the height of $F$ towards $K$.
Note that it is possible $|F|/|K| \rightarrow \infty$ as $K$ shrinks to $F$ i.e., $l_F \rightarrow 0$.
But we will see that the estimate of $\|  u - \mathrm{P}_F^0 u \|_{0,K}$ can compensate the issue.
Without loss of generality, we let $F$ be on the $x_1x_2$ plane with $D_1D_3$ being the $x_1$ axis, see Figure \ref{fig:prism} for illustration, 
and assume the dihedral angle associated with $D_1D_2$ is not greater than $\pi/2$. 
In this case, by the elementary geometry, we know that every face angle and dihedral angle associated with the vertex $D_1$ is uniformly bounded below and above. 
Let $D_1D_3$, $D_1D_2$ and $D_1A_1$ 
have the directional vectors $\bft_1$, $\bft_2$, $\bft_3$. Then, we can construct an affine mapping 
\begin{equation}
\label{lem_cylin_trace_eq3_1}
 \bfx = \mathfrak{F}( \hat{\bfx}) := \left[  \bft_1, ~  \bft_2, ~  \bft_3  \right] \hat{\bfx} := A \hat{\bfx}.
\end{equation}
Note that $\| A^{-1} \|\lesssim |\text{det}(A)|^{-1} = |(\bft_1\times \bft_2)\cdot\bft_3|^{-1}\lesssim 1$ as every angle is bounded below and above, and $\|A\|\lesssim 1$. 
So the mapping \eqref{lem_cylin_trace_eq3_1} maps to $K$ from a triangular prism $\hat{K}= \mathfrak{F}^{-1}(K)$ with the two triangular faces orthogonal to $\hat{F}=   \mathfrak{F}^{-1}(F)$, as shown in Figure \ref{fig:prism}. 
Note that the prism $\hat{K}$ satisfies the condition of Lemma \ref{lem_poin_anis}. 
Let $v= u - \mathrm{P}_F^0 u$ and $\hat{v}(\hat{\bfx}) := v(\mathfrak{F}( \hat{\bfx}) )$, and then we have 
\begin{equation}
\label{lem_cylin_trace_eq4}
\| v \|_{0,K} \lesssim \| \hat{v} \|_{0,\hat{K}} \lesssim l^{1/2}_{\hat{F}} \| \hat{v} \|_{0,\hat{F}} + l_{\hat{F}} \| \nabla \hat{v} \|_{0,\hat{K}}
\lesssim l^{1/2}_F \| u - \mathrm{P}_F^0 u \|_{0,F} + l_F \| \nabla u \|_{0,K}
\end{equation}
where we have used $l_{\hat{F}}\simeq l_F$. 
Substituting \eqref{lem_cylin_trace_eq4} into \eqref{lem_cylin_trace_eq3} yields
\begin{equation}
\begin{split}
\label{lem_cylin_trace_eq5}
\| \mathrm{P}_{F}^0 u - \mathrm{P}_K^0 u \|_{0,F} & \lesssim  (|F| l_F/|K|)^{1/2} \left(  \| u - \mathrm{P}_F^0 u \|_{0,F} + l^{1/2}_F \| \nabla u \|_{0,K}  \right) \lesssim h_K \| \nabla u \|_{0,R},
\end{split}
\end{equation}
where we have used $|F| l_F \le |K|$ and \eqref{lem_cylin_trace_eq2}.
\end{proof}

\begin{figure}[h]
  \centering
  \begin{minipage}{.28\textwidth}
  \centering
  \includegraphics[width=1.4in]{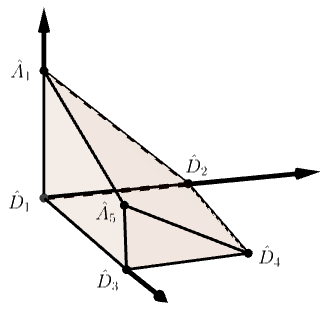}
  \caption{The reference triangular prism $\hat{K}$ mapped to $K$ by $\mathfrak{F}$. The two triangular faces $\hat{D}_1\hat{D}_2\hat{A}_1$ and $\hat{D}_3\hat{D}_4\hat{A}_5$ are perpendicular to the face $\hat{D}_1\hat{D}_3\hat{D}_4\hat{D}_2$}
  \label{fig:prism}
  \end{minipage}
  ~
  \begin{minipage}{0.66\textwidth}
  \centering
   \includegraphics[width=1.7in]{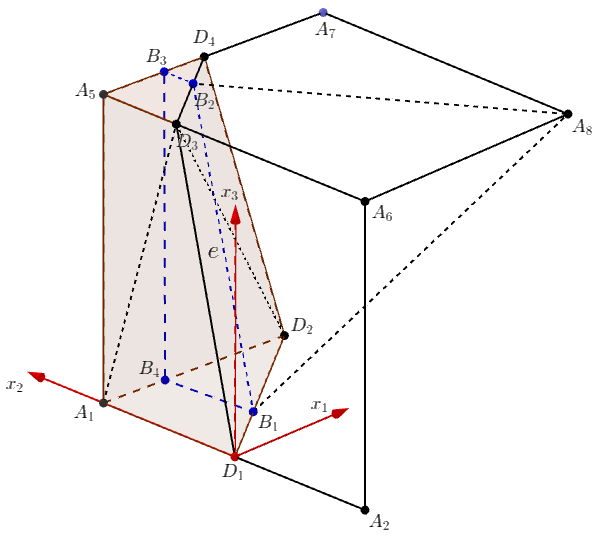}
  \includegraphics[width=1.7in]{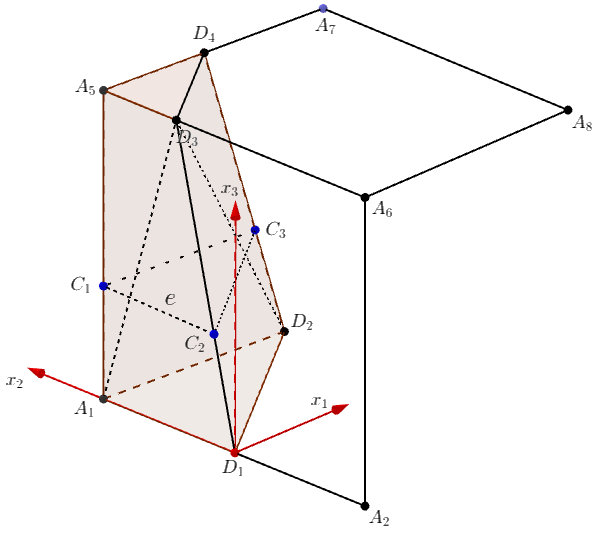}
  \caption{Illustration of the proof of Lemma \ref{lem_edge_est}. 
  Left: the proof for $\bft_1$ being the directional vector of $D_1D_3$. 
  Right: the proof for $\bft_2$ being the directional vector of $D_1A_1$, 
  where the two cases are considered separately: $h_{A_1D_1}\le h_{A_1D_2}$ and $h_{A_1D_1} \ge h_{A_1D_2}$.}
  \label{fig:crosssec1}
  \end{minipage}
\end{figure}

With the preparation above, we are ready to verify Hypotheses \hyperref[asp:H3]{(H3)}-\hyperref[asp:H5]{(H5)}. 

\begin{lemma}
\label{lem_assump_verify4}
Under Assumptions \hyperref[asp:A1]{(A1)} and \hyperref[asp:A4]{(A4)}, Hypothesis \hyperref[asp:H3]{(H3)} holds.
\end{lemma}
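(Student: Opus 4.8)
The plan is to bound, element by element and then sum, the two contributions to
\[
\vertiii{u-u_I}^2_K = \|\nabla\Pi_K(u-u_I)\|^2_{0,K} + \|(u-u_I)-\Pi_K(u-u_I)\|^2_{S_K}
\]
coming from \eqref{vem_scheme} and \eqref{stab_f}. The governing fact is that, with $\beta_h=1$ and $\mathcal{W}_h(K)=\mathcal{P}_1(K)$, the vector $\bfp_h:=\nabla\Pi_K(u-u_I)$ is the \emph{constant} vector $\mathrm{P}^0_K\nabla(u-u_I)=|K|^{-1}\int_K\nabla(u-u_I)\dd\bfx$. Since the trace inequality of Lemma \ref{lem_traceinequa} is no longer available on these shrinking elements, the whole argument is rebuilt around Lemma \ref{lem_edge_est}. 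Throughout I focus on the triangular prism of the left plot of Figure \ref{fig:cub} (the hardest case) and write $\bft_1,\bft_2,\bft_3$ for the edge directions emanating from $D_1$, namely $D_1D_3$ (the non-shrinking axis), $D_1A_1$ and $D_1D_2$ (the latter two spanning a cross-sectional triangular face).

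For the projection term I recover the full constant vector $\bfp_h$ from its three edge-direction components. Each $\bfp_h\cdot\bft_j=|K|^{-1}\int_K\nabla(u-u_I)\cdot\bft_j\dd\bfx$ is controlled by Lemma \ref{lem_edge_est}, giving $|\bfp_h\cdot\bft_j|\lesssim h^{1/2}_K|F_j|^{-1/2}\|u\|_{2,R}$ with a suitably chosen face $F_j\supset\bft_j$. Passing from the three scalars to $|\bfp_h|$ requires the matrix $[\bft_1\,\bft_2\,\bft_3]$ to be invertible with a shape-independent bound; this is arranged by picking, inside the cross-sectional triangle, the vertex opposite its longest edge, so that by the maximum angle condition (Lemma \ref{lem_edge_max}) $\bft_2,\bft_3$ have a mutually bounded angle, while $\bft_1$ is transversal to that face with a bounded dihedral angle for prisms cut from cuboids. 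Then $\|\nabla\Pi_K(u-u_I)\|_{0,K}=|K|^{1/2}|\bfp_h|$, and the claim reduces to the elementary geometric inequalities $|K|\lesssim h_K|F_j|$ (e.g.\ $|K|\simeq h_{A_1D_1}h_{A_1D_2}h_K$ against $h_K|F_j|\simeq h_{A_1D_1}h_K^2$, using $h_{A_1D_2}\le h_K$), which yield $\|\nabla\Pi_K(u-u_I)\|_{0,K}\lesssim h_K\|u\|_{2,R}$.

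For the stabilization term, $\|(u-u_I)-\Pi_K(u-u_I)\|^2_{S_K}=h_K\sum_{F}\|\nabla_F[(u-u_I)-\Pi_K(u-u_I)]\|^2_{0,F}$, and I split each face contribution by the triangle inequality into $\|\nabla_F(u-u_I)\|_{0,F}$ and $\|\nabla_F\Pi_K(u-u_I)\|_{0,F}$. The first is a purely 2D interpolation error on the face triangulation, estimated edge-by-edge exactly as in the established \eqref{lem_uI_face_eq7}-type argument (Lemmas \ref{lem_interp_1D} and \ref{lem_grad} with Lemma \ref{lem_proj}, lifting the face regularity to a shape-regular polyhedron sitting inside $R$), giving $\|\nabla_F(u-u_I)\|_{0,F}\lesssim h^{1/2}_K\|u\|_{2,R}$. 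The crucial point for the second term is to \emph{not} bound it crudely by $|F|^{1/2}|\bfp_h|$, which loses control when $h_K|\partial K|\gg|K|$, but to observe that $\nabla_F\Pi_K(u-u_I)$ is precisely the face-tangential part of $\bfp_h$, hence determined by $\{\bfp_h\cdot\bft_e:e\subset F\}$, and then to apply Lemma \ref{lem_edge_est} \emph{with the same face} $F$. This makes the $|F|^{-1/2}$ from Lemma \ref{lem_edge_est} cancel the $|F|^{1/2}$ in $\|\nabla_F\Pi_K(u-u_I)\|_{0,F}\le|F|^{1/2}|\nabla_F\Pi_K(u-u_I)|$, leaving $\|\nabla_F\Pi_K(u-u_I)\|_{0,F}\lesssim h^{1/2}_K\|u\|_{2,R}$; summing over the uniformly bounded number of faces gives $\|(u-u_I)-\Pi_K(u-u_I)\|_{S_K}\lesssim h_K\|u\|_{2,R}$.

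The main obstacle is the projection term: recovering a genuinely three-dimensional constant gradient from edge-direction integrals on an element that may collapse to the segment $A_1A_5$. The difficulty is the simultaneous control of (i) the conditioning of the edge basis, resolved by the maximum angle condition which guarantees two cross-sectional edges of bounded mutual angle, and (ii) the competition between the shrinking face areas $|F_j|^{-1/2}$ in Lemma \ref{lem_edge_est} and the surviving factor $|K|^{1/2}$, which closes only through the careful, case-dependent geometric bookkeeping $|K|\lesssim h_K|F_j|$ specific to the prism. By contrast, on each individual face the $|F|$ factors cancel, so the stabilization term is the more routine part once the face choice in Lemma \ref{lem_edge_est} is made consistently. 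Summing the per-element bounds over $\mathcal{T}_h$, with the cuboids $R$ of finite overlap, then yields $\sum_K\vertiii{u-u_I}^2_K\lesssim h^2\|u\|^2_{2,\Omega}$, i.e.\ Hypothesis \hyperref[asp:H3]{(H3)}.
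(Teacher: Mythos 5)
Your proposal is correct and follows essentially the same route as the paper's proof: the projection term is recovered from three (orthogonal, for cuboid-cut elements) edge directions via the projection property and Lemma \ref{lem_edge_est}, closed by the geometric inequality $|K|\lesssim h_K|F|$, and the stabilization term is split into the boundary interpolation error (Lemma \ref{lem_uI_face}-type argument) plus the face-tangential part of the constant gradient, bounded through Lemma \ref{lem_grad} and Lemma \ref{lem_edge_est} applied with the same face so that the $|F|^{\pm 1/2}$ factors cancel. The only cosmetic difference is that the paper simply invokes the existence of three mutually orthogonal edges rather than your conditioning argument via the maximum angle, but the substance is identical.
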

\begin{proof}
We first estimate $\|\nabla\Pi_K(u-u_I)\|_{0,K}$. 
Note that there exist three orthogonal edges $e_i$, $i=1,2,3$, of $K$ with the directional vectors $\bft_i$. Using the projection property and Lemma \ref{lem_edge_est}, we have
\begin{equation}
\begin{split}
\label{lem_uI_face2_eq2}
& \| \nabla \Pi_K ( u - u_I )  \|_{0,K}  = |K|^{1/2} \| \nabla \Pi_K ( u - u_I )  \|  
\lesssim |K|^{1/2} \sum_{i=1,2,3} \| \nabla \Pi_K ( u - u_I ) \cdot\bft_i \| \\
 = & \sum_{i=1,2,3} |K|^{-1/2} \int_K \nabla \Pi_K ( u - u_I ) \cdot\bft_i  \dd \bfx 
 =  \sum_{i=1,2,3} |K|^{-1/2} \int_K \nabla  ( u - u_I ) \cdot\bft_i  \dd \bfx \\
 \le & h^{1/2}_K (|K|/|F|)^{1/2} \| u \|_{2,R} \le h_K \| u \|_{2,R} .
\end{split}
\end{equation}

For the stabilization term, using the argument similar to Lemma \ref{lem_uI_face_eq0}, we have
\begin{equation}
\label{lem_uI_face2_eq2_1}
\| u - u_I \|_{0, \partial K} + h_R | u - u_I |_{1,\partial K} \lesssim h^{3/2}_K \| u \|_{2,R}
\end{equation}
Now, let us concentrate on the estimate of $\| \nabla_{\partial K} \Pi_K( u - u_I ) \|_{0,\partial K}$. 
Given each triangular element $F\in\mathcal{T}_h(\partial K)$, if $F$ is the bottom or top triangular face, i.e., $\triangle A_1D_1AD_2$ and $\triangle A_5D_3AD_4$,
the trace inequality immediately yields the desired result. 
The difficult part is the estimate for the three sided quadrilateral faces where the trace inequality is not applicable due to the possibly shrinking height. 
Based on symmetry, we can assume that $F$ is one triangular element of the quadrilateral face, which, without loss of generality, can be taken as $\triangle A_1D_1D_3$. 

By Lemma \ref{lem_grad}, thanks to the maximum angle condition, there exist two edges $e_i$, $i=1,2$, with their tangential directional vectors $\bft_i$ such that
\begin{equation}
\begin{split}
\label{lem_assump_verify4_eq1}
\| \nabla_{F} \Pi_K( u - u_I ) \|_{0,F} & \lesssim \sum_{i=1,2} |F|^{1/2} \| \nabla \Pi_K( u - u_I ) \cdot \bft_i \| \\
& = |F|^{1/2}/|K| \abs{ \sum_{i=1,2} \int_K  \nabla (  u - u_I ) \cdot \bft_i \dd \bfx }.
\end{split}
\end{equation}
For $F=\triangle A_1D_1D_3$, we can take $\bft_1$ and $\bft_2$ as the directional vectors of $D_1D_3$ and $D_1A_1$. 
Then, Lemma \ref{lem_edge_est} finishes the proof.
\end{proof}

\begin{lemma}
\label{lem_assump_verify5}
Under Assumptions \hyperref[asp:A1]{(A1)} and \hyperref[asp:A4]{(A4)}, Hypothesis \hyperref[asp:H4]{(H4)} holds.
\end{lemma}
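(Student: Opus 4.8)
The plan is to reduce everything to the directional derivatives of $u$ along the three mutually orthogonal cuboid edge directions of $K$ and then invoke the anisotropic trace bound already established in Lemma \ref{lem_cylin_trace}. Recall that with $\mathcal{W}_h(K)=\mathcal{P}_1(K)$ one has $\nabla\Pi_K u=\mathrm{P}^0_K\nabla u$, the constant $L^2$-projection of $\nabla u$ on $K$. Estimate \eqref{Pi_approx_eq1} is then immediate: the best-approximation property of $\mathrm{P}^0_K$ gives $\|\nabla u-\mathrm{P}^0_K\nabla u\|_{0,K}\le\|\nabla u-\mathrm{P}^0_R\nabla u\|_{0,R}\lesssim h_R|u|_{2,R}$ by Lemma \ref{lem_proj} on the shape-regular cuboid $R\supseteq K$, and summing over all cut elements (each $R$ hosting $\mathcal{O}(1)$ of them) yields $h^2\|u\|^2_{2,\Omega}$. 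For \eqref{Pi_approx_eq2}, since the surface gradient is the tangential part of the full gradient, $\|\nabla_F(u-\Pi_K u)\|_{0,F}\le\|\nabla(u-\Pi_K u)\|_{0,F}$ pointwise, so $\|u-\Pi_K u\|^2_{S_K}=h_K\sum_{F\in\mathcal{F}_K}\|\nabla_F(u-\Pi_K u)\|^2_{0,F}\le h_K\|\nabla(u-\Pi_K u)\|^2_{0,\partial K}$; it therefore suffices to prove $h_K^{1/2}\|\nabla(u-\Pi_K u)\|_{0,\partial K}\lesssim h\|u\|_{2,R}$ on each $K$.

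The decisive observation is that, letting $\{\bft_i\}_{i=1}^3$ be the orthonormal frame of the three orthogonal edge directions of $K$ (as in the proof of Lemma \ref{lem_assump_verify4}), each component of the full gradient on a face $F$ has the form
\[
\nabla(u-\Pi_K u)\cdot\bft_i=\partial_{\bft_i}u-(\mathrm{P}^0_K\nabla u)\cdot\bft_i=\partial_{\bft_i}u-\mathrm{P}^0_K(\partial_{\bft_i}u),
\]
because $\bft_i$ is a constant vector. Thus each directional component is exactly a scalar ``function minus its $K$-average'' object, the scalar being $v:=\partial_{\bft_i}u\in H^1(R)$. Applying Lemma \ref{lem_cylin_trace} to $v$ gives $\|\partial_{\bft_i}u-\mathrm{P}^0_K(\partial_{\bft_i}u)\|_{0,F}\lesssim h_K^{1/2}\|\nabla\partial_{\bft_i}u\|_{0,R}\le h_K^{1/2}|u|_{2,R}$ for \emph{every} face $F\in\mathcal{F}_K$. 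Summing the three components and over the boundedly many faces yields $\|\nabla(u-\Pi_K u)\|_{0,\partial K}\lesssim h_K^{1/2}|u|_{2,R}$, whence $h_K^{1/2}\|\nabla(u-\Pi_K u)\|_{0,\partial K}\lesssim h_K|u|_{2,R}\lesssim h|u|_{2,R}$. Combined with the reduction of the stabilization term and summed over $K$ using the finite-overlap of the cuboids $R$, this establishes \eqref{Pi_approx_eq2} and hence \hyperref[asp:H4]{(H4)}.

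The step I expect to be the real crux is the control of the \emph{normal} component of $\nabla(u-\Pi_K u)$ on the faces whose height towards $K$ degenerates (e.g.\ the quadrilateral side faces of the prism as it collapses to the segment $A_1A_5$): a naive trace inequality towards $K$, or the crude bound $\|\mathrm{P}^0_R\nabla u-\mathrm{P}^0_K\nabla u\|_{0,F}\lesssim|F|^{1/2}|K|^{-1/2}\|\nabla u-\mathrm{P}^0_R\nabla u\|_{0,K}$, both produce an unbounded factor $(|F|/|K|)^{1/2}$ as $K$ shrinks. The point of the argument above is that this difficulty has effectively been absorbed into Lemma \ref{lem_cylin_trace}, whose anisotropic Poincar\'e-type estimate (via the affine map to the reference prism and the cancellation $|F|\,l_F\le|K|$ of face area against the shrinking height $l_F$) is uniform over all faces. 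The only genuinely new ingredient needed here is the orthonormal-frame decomposition, which converts the vector quantity $\nabla(u-\Pi_K u)$ — including its face-normal components — into three scalar $H^1$ functions to which Lemma \ref{lem_cylin_trace} applies directly, thereby sidestepping the blow-up entirely.
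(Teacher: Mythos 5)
Your proof is correct and takes essentially the same route as the paper: the paper's own proof is exactly the two-line citation ``\eqref{Pi_approx_eq1} is trivial by Lemma \ref{lem_proj}, while \eqref{Pi_approx_eq2} immediately follows from Lemma \ref{lem_cylin_trace}.'' Your component-wise application of Lemma \ref{lem_cylin_trace} to the scalars $\partial_{\bft_i}u$ (using that $\nabla\Pi_K u=\mathrm{P}^0_K\nabla u$ commutes with contraction against constant directions), together with the pointwise bound of the tangential gradient by the full gradient and the finite overlap of the cuboids $R$, is precisely the intended way to make that citation rigorous.
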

\begin{proof}
\eqref{Pi_approx_eq1} is trivial by Lemma \ref{lem_proj}, while \eqref{Pi_approx_eq2} immediately follows from Lemma \ref{lem_cylin_trace}.
\end{proof}

\begin{lemma}
\label{lem_assump_verify6}
Under Assumptions \hyperref[asp:A1]{(A1)}, \hyperref[asp:A2]{(A2)} and \hyperref[asp:A4]{(A4)}, Hypothesis \hyperref[asp:H5]{(H5)} holds.
\end{lemma}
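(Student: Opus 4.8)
The plan is to verify the two contributions in \eqref{L2_assump} separately. The boundary term $h_K\|u-u_I\|^2_{0,\partial K}$ is immediate: the boundary interpolation estimate \eqref{lem_uI_face2_eq2_1} established in the proof of Lemma~\ref{lem_assump_verify4} by the same argument as Lemma~\ref{lem_uI_face} (which is precisely where Assumption~\hyperref[asp:A2]{(A2)} enters) already gives $\|u-u_I\|_{0,\partial K}\lesssim h_K^{3/2}\|u\|_{2,R}$, whence $h_K\|u-u_I\|^2_{0,\partial K}\lesssim h_K^4\|u\|^2_{2,R}$; summing over $K$ with the finite overlap of the background cuboids $R$ yields the desired $h^4\|u\|^2_{2,\Omega}$ bound. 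The real work is the volumetric term, for which I aim to prove the local estimate $\|u-\Pi_K u\|_{0,K}\lesssim h^2\|u\|_{2,R}$.

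The key structural observation is that, with $\mathcal{W}_h(K)=\mathcal{P}_1(K)$ and $\beta_h=1$, the projection $\Pi_K$ reproduces linear polynomials: for $p\in\mathcal{P}_1(K)$ one has $\nabla\Pi_K p=\mathrm{P}^0_K\nabla p=\nabla p$, and the boundary-average constraint fixes the constant, so $\Pi_K p=p$. Setting $w:=u-\mathrm{P}^1_R u$ and using $\Pi_K\mathrm{P}^1_R u=\mathrm{P}^1_R u$, I can therefore replace $u$ by $w$, namely $u-\Pi_K u=w-\Pi_K w$. The first piece is controlled on the shape-regular cuboid by Lemma~\ref{lem_proj}, $\|w\|_{0,K}\le\|w\|_{0,R}\lesssim h_R^2|u|_{2,R}$, so it remains to bound $\Pi_K w$. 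Exactly as in the proof of Lemma~\ref{lem_assump_verify3}, $\Pi_K w=(\mathrm{P}^0_K\nabla w)\cdot(\bfx-\bfx_0)+\mathrm{P}^0_{\partial K}w$ with $\bfx_0$ the boundary centroid so that $(\bfx-\bfx_0,1)_{\partial K}=0$, and the gradient part obeys $\|(\mathrm{P}^0_K\nabla w)\cdot(\bfx-\bfx_0)\|_{0,K}\lesssim h_K\|\nabla w\|_{0,K}\lesssim h_K h_R|u|_{2,R}$, again by Lemma~\ref{lem_proj}.

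The crux is the constant boundary-average term $\|\mathrm{P}^0_{\partial K}w\|_{0,K}$. In Lemma~\ref{lem_assump_verify3} this was handled by an $H^1(\partial K)$ trace inequality towards $K$, which is \emph{unavailable here} because the three lateral faces of the prism have a height towards $K$ that may shrink to zero; this is exactly the obstacle. The plan is to split $\mathrm{P}^0_{\partial K}w=\mathrm{P}^0_K w+\mathrm{P}^0_{\partial K}(w-\mathrm{P}^0_K w)$, absorb $\|\mathrm{P}^0_K w\|_{0,K}\le\|w\|_{0,R}$ into the first piece, and control the oscillation $\|w-\mathrm{P}^0_K w\|_{0,\partial K}$ face by face through Lemma~\ref{lem_cylin_trace} — the trace-type bound tailored to these cut elements — giving $\|w-\mathrm{P}^0_K w\|_{0,\partial K}\lesssim h_K^{1/2}\|\nabla w\|_{0,R}\lesssim h_K^{1/2}h_R|u|_{2,R}$. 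Combining with $\|\mathrm{P}^0_{\partial K}(w-\mathrm{P}^0_K w)\|_{0,K}=|K|^{1/2}|\partial K|^{-1}|(w-\mathrm{P}^0_K w,1)_{\partial K}|\le|K|^{1/2}|\partial K|^{-1/2}\|w-\mathrm{P}^0_K w\|_{0,\partial K}$ and the isoperimetric inequality $|K|^{1/2}|\partial K|^{-1/2}\lesssim h_K^{1/2}$ produces a bound $\lesssim h_K h_R|u|_{2,R}$.

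Collecting the three contributions and using $h_K\le h_R\lesssim h$ gives $\|u-\Pi_K u\|_{0,K}\lesssim h^2\|u\|_{2,R}$; squaring, summing over $K$ and invoking the finite overlap of the cuboids completes $\sum_K\|u-\Pi_K u\|^2_{0,K}\lesssim h^4\|u\|^2_{2,\Omega}$, which together with the boundary term finishes the proof. I expect the lateral-face oscillation estimate — reducing $\|\mathrm{P}^0_{\partial K}w\|_{0,K}$ to $\|\nabla w\|_{0,R}$ in the absence of a usable trace inequality on the shrinking faces — to be the main obstacle, and it is precisely what Lemma~\ref{lem_cylin_trace} is designed to overcome.
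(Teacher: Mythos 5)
Your proof is correct and takes essentially the same route as the paper: the paper's own proof is precisely ``the argument of Lemma \ref{lem_assump_verify3}, with Lemma \ref{lem_cylin_trace} replacing the trace inequality in \eqref{lem_assump_verify3_2}'', which is exactly your decomposition $\Pi_K w = (\mathrm{P}^0_K\nabla w)\cdot(\bfx-\bfx_0)+\mathrm{P}^0_{\partial K}w$ combined with the isoperimetric bound, together with \eqref{lem_uI_face2_eq2_1} for the boundary term. Your additional step of splitting off $\mathrm{P}^0_K w$ so that Lemma \ref{lem_cylin_trace} (which controls only the oscillation $w-\mathrm{P}^0_K w$ on faces, not $w$ itself) becomes applicable is a detail the paper leaves implicit, and you handle it correctly.
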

\begin{proof}
The argument is the same as Lemma \ref{lem_assump_verify3}, 
where the only difference is to apply Lemma \ref{lem_cylin_trace} to $\|v \|_{0,\partial K}$ in \eqref{lem_assump_verify3_2}. 
\end{proof}

In summary, Assumptions \hyperref[asp:A1]{(A1)}, \hyperref[asp:A2]{(A2)} and \hyperref[asp:A4]{(A4)} imply 
Hypotheses \hyperref[asp:H1]{(H1)}-\hyperref[asp:H6]{(H6)} which further yield the desired optimal estimates.
We refer readers to some numerical results in \cite{2017ChenWeiWen}.
\begin{figure}[h]
  \centering
   \includegraphics[width=5in]{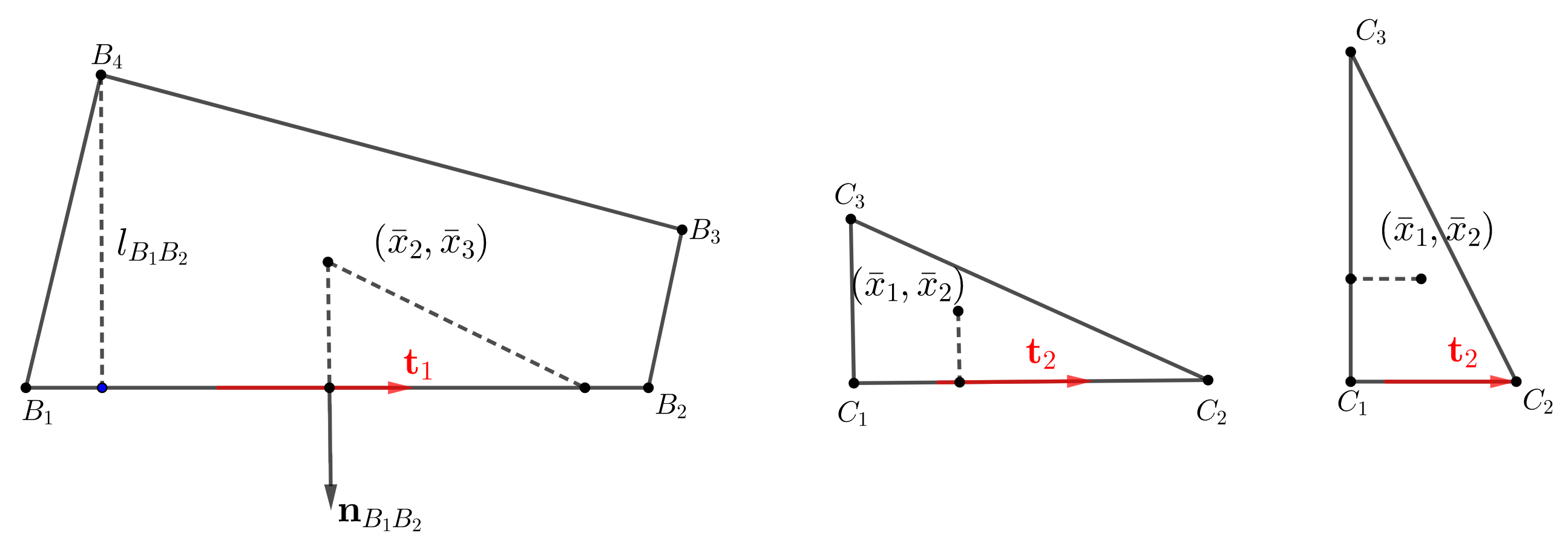}
  \caption{Left: the estimate of $s_h$ for $\bft_1$, and $|s_h(x_2,x_3)|$ is the length of projection of the vector from $(\bar{x}_1,\bar{x}_2)$ to $(x_1,x_2)$ onto $\bfn_{B_1B_2}$, 
  which cannot be greater than $l_{B_1B_2}$. 
  Middle: the estimate of $\tilde{s}_h$ for $\bft_2$ if $h_{C_1C_3}\le h_{C_1C_2}$, and $|\tilde{s}_h(x_1,x_2)|$ is the length of projection of the vector from $(\bar{x}_1,\bar{x}_2)$ to $(x_1,x_2)$ onto $C_1C_3$, 
  which cannot be greater than $h_{C_1C_3}$. 
  Right: the estimate of $\tilde{s}_h$ for $\bft_2$ if $h_{C_1C_3}\ge h_{C_1C_2}$, and $|\tilde{s}_h(x_1,x_2)|$ is the length of projection of the vector from $(\bar{x}_1,\bar{x}_2)$ to $(x_1,x_2)$ onto $C_1C_2$, 
  which cannot be greater than $h_{C_1C_2}$.}
  \label{fig:quad}
\end{figure}


\begin{remark}
\label{rem_tetangle}
The key technique in Lemma \ref{lem_assump_verify4} is to transfer the volume and face integrals into edge integrals, see \eqref{lem_uI_face2_eq2} and \eqref{lem_assump_verify4_eq1}. It heavily relies on that edges of faces cannot be nearly collinear, and respectively, edges of elements cannot be nearly coplanar. In fact, this is the essential meaning of the maximum angle condition, see Lemma \ref{lem_edge_max} for illustration. For the special case of elements in this section, we can find three orthogonal edges. For the elements in Section \ref{sec:fitted_1}, Assumption \hyperref[asp:A3]{(A3)} actually also implies the existence of such three edges. In summary, we shall call an element ``degenerate" if its edges are all are nearly coplanar, which is the non-favorable case in this work.
\end{remark}



 \section{Application III: virtual spaces with discontinuous coefficients}
 \label{sec:IVEM}
 
In this section, we consider the case that a single element contains multiple materials corresponding to different PDE coefficients. 
Henceforth, we assume $\Omega$ is partitioned into two subdomains $\Omega^{\pm}$ by a surface $\Gamma$, called interface, 
and assume $\beta$ in the model problem \eqref{model} is a piecewise constant function: $\beta|_{\Omega^{\pm}} = \beta^{\pm}$,
where the assumption of two subdomains (two materials) is only made for simplicity. Define $v^{\pm}:=v|_{\Omega^{\pm}}$ for any appropriate function $v$. 
Note that $u \in H^1(\Omega)$ and $\beta \nabla u \in \bfH(\text{div};\Omega)$ are not trivial now, but correspond to the jump conditions 
\begin{equation}
\label{jump_cond}
\jump{u}_{\Gamma}:=u^+|_{\Gamma} - u^-|_{\Gamma} = 0 ~~~~~ \text{and} ~~~~~  \jump{\nabla u \cdot \bfn}_{\Gamma}:= \nabla u^+|_{\Gamma} \cdot \bfn - \nabla u^-|_{\Gamma} \cdot \bfn = 0. 
\end{equation}
Generally, the jump conditions make the solutions to interface problems only have a piecewise higher regularity. Accordingly, the local PDEs in \eqref{lift1} to define the virtual spaces involve discontinuous coefficients.
In fact, \eqref{lift1} is a local interface problem whose solutions belonging to $H^1(K)$ \cite{1998ChenZou,2002HuangZou} satisfy the jump conditions in \eqref{jump_cond} on $\Gamma^K$.

Given $D\subseteq \Omega$ intersecting $\Gamma$, define $H^k(D^-\cup D^+) := \{ u\in L^2(D): ~ u|_{D^{\pm}} \in H^k(D) \}$, with an integer $k \ge 0$. For smooth $\Gamma$ and $\partial \Omega$, by \cite{1998ChenZou} the solution $u$ belong to the space
\begin{equation}
  \label{beta_space_2}
H^2_0(\beta;\Omega) = \{ u\in H^1_0(\Omega) \cap H^2(\Omega^-\cup\Omega^+): \beta\nabla u\in \bfH(\text{div};\Omega) \}
\end{equation}
which is slightly modified from \eqref{beta_space_1} due to the discontinuity of $\beta$.
Define the Sobolev extensions $u^{\pm}_E\in H^2_0(\Omega)$ of $u^{\pm}$ from $\Omega^{\pm}$ to $\Omega$. The following boundedness holds \cite{2001GilbargTrudinger}
\begin{equation}
\label{sobolev_ext}
\| u^{\pm}_E \|_{H^2(\Omega)} \le C_{\Omega}  \| u^{\pm} \|_{H^2(\Omega^{\pm})},
\end{equation}
for a constant $C_{\Omega} $ only depending on the geometry of $\Gamma$. 
We further define the norms $\|u\|_{E,k,D} = \|u^+_E\|_{k,D}+\|u^-_E\|_{k,D}$ and $|u|_{E,k,D} = |u^+_E|_{k,D}+|u^-_E|_{k,D}$ to simplify the presentation.

The method analyzed in this section is referred to as the immersed virtual element method (IVEM) developed in \cite{2021CaoChenGuoIVEM,2022CaoChenGuo}. 
It is designed to solve interface problems on unfitted meshes, i.e., an interface surface is allowed to cut the interior of elements, by projecting the virtual spaces onto some non-polynomial and non-smooth spaces, i.e., the IFE spaces, that can capture the conditions in \eqref{jump_cond}. We refer readers to \cite{2020GuoLinZou,2022JiWangChenLi,2015LinLinZhang,2018ZhuangGuo} for various IFE spaces and schemes.
As the focus of this section is to tackle the discontinuous coefficients on unfitted meshes, we shall assume the background mesh cut by the interface is a simple tetrahedral mesh, see Figure \ref{fig:tetinter} for an illustration. 
The assumption is added into \hyperref[asp:A5]{(A5)} below. 
In fact, this is also a widely-used setup in practice, as meshes are not needed to fit the interface \cite{2020AdjeridBabukaGuoLin,2020GuoLin,2020GuoZhang}.

\begin{figure}[h]
\centering
\begin{minipage}{.4\textwidth}
  \centering
  \includegraphics[width=1in]{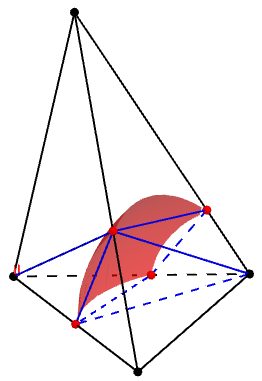}
  \caption{Illustration of an interface element. Being a polyhedron, it has 8 vertices of which 4 are the vertices of $K$ and 4 are the intersecting points with $\Gamma^K_h$. The face triangulation satisfies the }
  \label{fig:tetinter}
\end{minipage}
\begin{minipage}{0.5\textwidth}
  \centering
   \includegraphics[width=2in]{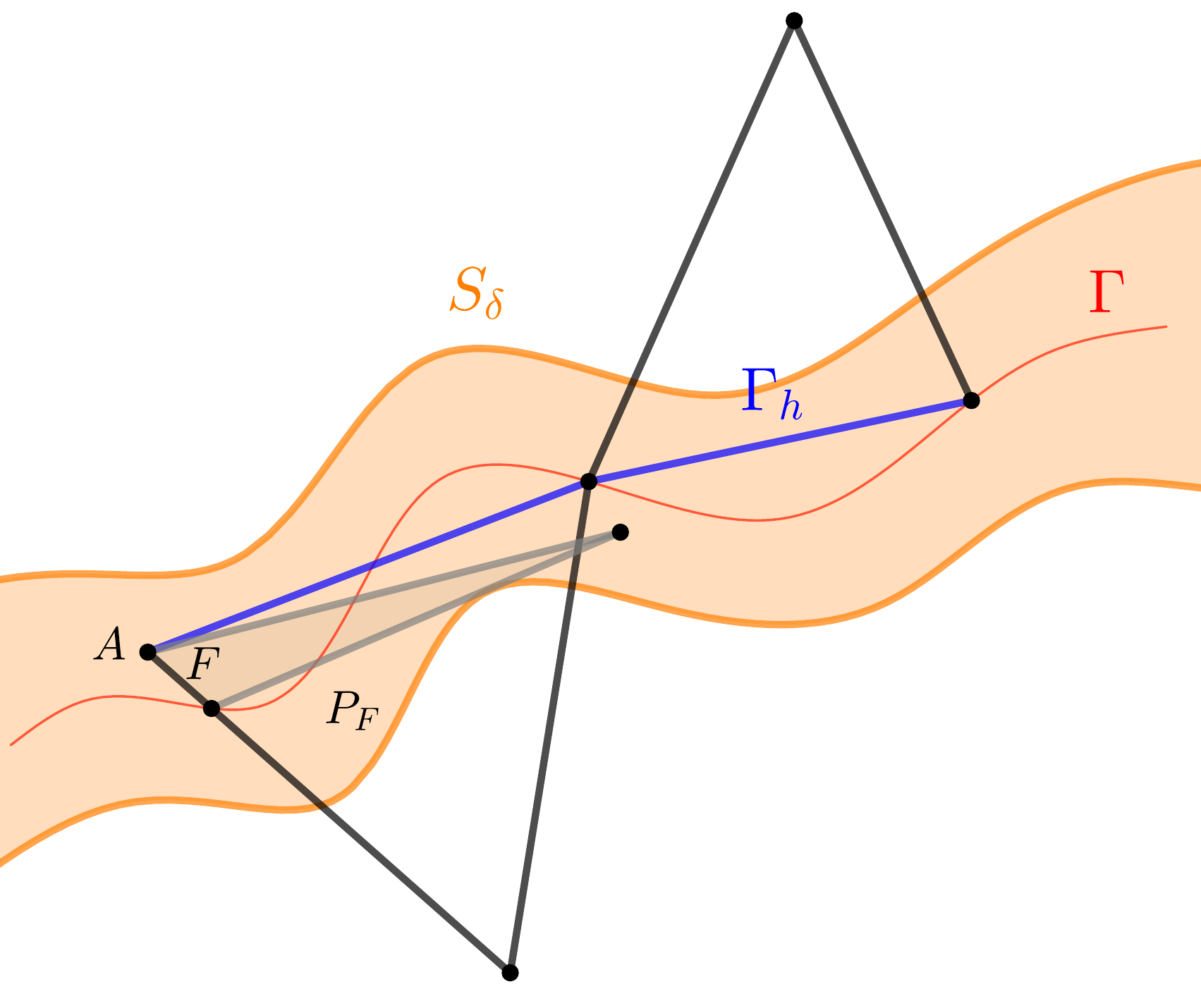}
  \caption{2D Illustration of Assumption \hyperref[asp:A5]{(A5)}. }
  \label{fig:assump3}
  \end{minipage}
\end{figure}

Let us first review some fundamental ingredients of the IVEM. 
Denote the collection of interface elements: $\mathcal{T}^i_h=\{ K\in \mathcal{T}_h: K \cap \Gamma \neq \emptyset \}$. 
As the linear method is used, we let $\Gamma_h$ be a linear approximation to $\Gamma$, where $\Gamma_h$ can be constructed as a linear interpolant of the level-set function of $\Gamma$ on the mesh $\mathcal{T}_h$. 
For each $K\in \mathcal{T}^i_h$ intersecting the interface, we let $\Gamma^K= \Gamma \cap K$ and  $\Gamma^K_h= \Gamma_h \cap K$. 
Then, $K$ can be regarded as a polyhedron whose vertices include the vertices of both $K$ and $\Gamma^K_h$, and short edges and small faces may appear. For $\mathcal{T}_h$ being shape-regular, 
it is easy to see that the face triangulation in Assumption \hyperref[asp:A1]{(A1)} holds, 
and thus we still use $\mathcal{B}_h(\partial K)$ as the trace space. 
We refer readers to Figure \ref{fig:tetinter} for illustration of polyhedron and triangulation.

Let $\beta_h$ be defined with $\Gamma_h$, 
Apparently, polynomial spaces cannot capture the jump behavior across $\Gamma^K_h$ and thus are not suitable for projection. 
Instead, we employ the following linear immersed finite element (IFE) spaces as the projection space. 
A local linear IFE function is a piecewise polynomial space defined below
\begin{equation}
\label{ife_fun_space}
\mathcal{W}_h(K) := \{ v_h|_{K^{\pm}_h}\in \mathcal{P}_1(K^{\pm}_h): v_h\in H^1(K), ~~~ \beta_h \nabla v_h \in \bfH(\text{div};K) \},
\end{equation}
of which the conditions are equivalent to $\jump{v_h}_{\Gamma^K_h}=0$ and $\jump{\beta_h\nabla v_h\cdot\bar{\bfn}_K}_{\Gamma^K_h} = 0$. 
We can derive explicit representation of the IFE functions.
The continuity condition shows that $\nabla v_h$ must be continuous tangentially on $\Gamma^K_h$. 
Namely, for $\bar{\bft}^1_K$ and $\bar{\bft}^2_K$ being two orthogonal unit tangential vectors of $\Gamma^K_h$, there holds $\nabla v^-_h \cdot \bar{\bft}^i_K = \nabla v^+_h \cdot \bar{\bft}^i_K$, $i=1,2$.
With the flux jump condition, we have the following identities:
\begin{equation}
\label{gradv}
\nabla v^-_h = M^- \nabla v^+_h ~~~ \text{and} ~~~ \nabla v^+_h = M^+ \nabla v^-_h, ~~~~ M^- = [\bar{\bft}^1_K,\bar{\bft}^2_K,\beta^-\bar{\bfn}_K]^{-T}[\bar{\bft}^1_K,\bar{\bft}^2_K,\beta^+\bar{\bfn}_K]^T,
\end{equation}
where $M^+=(M^-)^{-1}$.
Define the piecewise constant vector space:
\begin{equation}
\label{Cspace}
\bfP^{\beta}_h(K) = \{  \bfp_h |_{K^{\pm}_h}\in[\mathcal{P}_1(K^{\pm}_h)]^3,~ \bfp_h|_{K^-_h} = M^-\bfp_h|_{K^+_h} \}.
\end{equation}
Therefore, given any point $\bfx_K\in \Gamma^K_h$, the IFE space in \eqref{ife_fun_space} is equivalent to
\begin{equation}
\label{lem_ife_space_eq0}
\mathcal{W}_h(K) = \{ \bfp_h \cdot(\bfx - \bfx_K) + c~:~ \bfp_h \in \bfP^{\beta}_h(K), ~ c\in\mathcal{P}_0(K) \}.
\end{equation}
One can verify that the space in \eqref{lem_ife_space_eq0} is invariant with respect to $\bfx_K$.


It is trivial that $\text{div}(\beta_h \nabla v_h)=0$, $\forall v_h \in \mathcal{W}_h(K)$. 
Thus, the projection in \eqref{Pi_proj} is computable, which is $\beta_h$-weighted different from the standard projection in the previous two cases.
In addition, since $\mathscr{L}_K$ preserves constants as $\nabla \mathcal{P}_0(K)$ always vanishes, and thus Hypothesis \hyperref[asp:H1]{(H1)} holds. 
For Hypothesis \hyperref[asp:H2]{(H2)}, we can employ Assumptions \hyperref[asp:A1]{(A1)} and \hyperref[asp:A2]{(A2)}, 
but as $K$ is shape regular we can also use the standard Poincar\'e inequality \cite{Brenner;Sung:2018Virtual}.
We then need to verify Hypotheses \hyperref[asp:H3]{(H3)}-\hyperref[asp:H6]{(H6)} below, 
with slightly modifying the right-hand sides in \eqref{approxi_VK_Pi}-\eqref{betah_approx} by replacing $\| u \|_{0,\Omega}$ by $\| u \|_{0,\Omega^-\cup\Omega^+}$, due to the regularity. 
Accordingly, the regularity assumptions in Theorems \ref{thm_energy_est} and \ref{thm_u} becomes the space in \eqref{beta_space_2}, 
and the meta-framework developed in Section \ref{sec:unifyframe} is also applicable.
Furthermore, the analysis is standard on non-interface elements, as $\beta_h$ reduces to a single constant. In the following discussion, we focus on interface elements.



Let us first introduce an assumption on the geometric error caused by $\Gamma$ and $\Gamma_h$. 
Let $\Gamma_h$ cut $\Omega$ into two polyhedral subdomains $\Omega^{\pm}_h$ differing from $\Omega^{\pm}$ in a small region which can be defined as $\widetilde{\Omega}_h := (\Omega^+\cap\Omega^-_h)\cup(\Omega^-\cap\Omega^+_h)$ called the \textit{mismatched region}. 
A classical theoretical tool to handle the mismatching region in FEMs is the $\delta$-strip argument developed in \cite{2010LiMelenkWohlmuthZou} recalled below. 
Define a $\delta$-strip:
$
S_{\delta} = \{ \bfx: \text{dist}(\bfx,\Gamma)\le \delta \}.
$
Denote $ K \cap S_{\delta} = K\cap \widetilde{\Omega}_h$ and $\widetilde{F} = F\cap \widetilde{\Omega}_h$ for each element $K$ and face $F$.
Make the following assumption:
\begin{itemize}
  \item[(\textbf{A5})] \label{asp:A5} (The $\delta$-strip condition) $\mathcal{T}_h$ is a shape-regular tetrahedral mesh. 
  On $\mathcal{T}_h$, $\Gamma_h$ is an optimal linear approximation to $\Gamma$ in the sense that
  \begin{equation}
\label{delta_strip_arg}
\widetilde{\Omega}_h \subseteq S_{\delta}, ~~~ \text{for some}~ \delta\lesssim h^2.
\end{equation} 
In addition, assume $S_{\delta}$ satisfies that, for each face $F$ of an element $K$, there is a pyramid $P_F\subseteq S_{\delta}\cap\omega_K$ with $\widetilde{F}$ as its base such that the associated supporting height is $\mathcal{O}(h_K)$. 
\end{itemize} 
\eqref{delta_strip_arg} basically means the optimal geometric accuracy of a linear approximation a surface, which indeed holds for smooth surfaces \cite{2016WangXiaoXu,2020GuoLin}.
A 2D illustration of Assumption \hyperref[asp:A5]{(A5)} is shown in Figure \ref{fig:assump3}. 
\begin{lemma}{\cite[Lemma 2.1]{2010LiMelenkWohlmuthZou}}
\label{lem_delta}
Let $u\in H^1(\Omega^-\cup\Omega^+)$, then there holds
\begin{equation}
\label{lem_delta_eq1}
\| u \|_{L^2(S_{\delta})} \lesssim \sqrt{\delta} \| u \|_{H^1(\Omega^-\cup\Omega^+)}.
\end{equation}
\end{lemma}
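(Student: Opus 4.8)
The plan is to exploit the smoothness of $\Gamma$ by introducing tubular (Fermi) coordinates in a neighborhood of the interface, thereby reducing the estimate to a one-dimensional calculation along normal lines, and then to invoke the trace theorem. Since $\Gamma$ is a smooth compact surface and, under Assumption \hyperref[asp:A5]{(A5)}, $\delta\lesssim h^2$ is smaller than its reach once $h$ is small, the normal map $\bfx=\bfs+t\,\bfn(\bfs)$, with $\bfs\in\Gamma$ and $t$ the signed distance to $\Gamma$, is a diffeomorphism from $\Gamma\times(-\delta,\delta)$ onto $S_\delta$ whose Jacobian is bounded above and below by constants depending only on $\Gamma$. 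Consequently the surface measure on each level set $\{\mathrm{dist}(\cdot,\Gamma)=t\}$ is comparable to that on $\Gamma$ uniformly in $t\in[-\delta,\delta]$, and all changes of variables below cost only harmless constants.

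First I would treat the two sides separately; by symmetry it suffices to bound $\|u\|_{L^2(S_\delta\cap\Omega^+)}$. In these coordinates, for $t\in[0,\delta]$ the fundamental theorem of calculus along a normal line gives $u(\bfs,t)=u^+(\bfs,0)+\int_0^t\partial_\tau u(\bfs,\tau)\dd\tau$, where $u^+(\bfs,0)$ is the trace of $u$ from $\Omega^+$ on $\Gamma$. Squaring, using $(a+b)^2\le 2a^2+2b^2$ together with Cauchy--Schwarz on the integral term, and then integrating in $t$ over $[0,\delta]$ while bounding the double integral by $\int_0^\delta t\big(\int_0^t|\partial_\tau u|^2\dd\tau\big)\dd t\le\tfrac{\delta^2}{2}\int_0^\delta|\partial_\tau u|^2\dd\tau$, I obtain
\[
\int_0^\delta\abs{u(\bfs,t)}^2\dd t\le 2\delta\,\abs{u^+(\bfs,0)}^2+\delta^2\int_0^\delta\abs{\partial_\tau u(\bfs,\tau)}^2\dd\tau .
\]

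Next I would integrate this inequality over $\bfs\in\Gamma$. Absorbing the bounded Jacobian, the left-hand side controls $\|u\|_{L^2(S_\delta\cap\Omega^+)}^2$ from above, the first term becomes $2\delta\|u^+\|_{L^2(\Gamma)}^2$, and the second is bounded by $\delta^2\|\nabla u\|_{L^2(S_\delta\cap\Omega^+)}^2$ since $\abs{\partial_\tau u}\le\abs{\nabla u}$. The trace theorem on $\Omega^+$ gives $\|u^+\|_{L^2(\Gamma)}^2\lesssim\|u\|_{H^1(\Omega^+)}^2$, and since $\delta\lesssim h^2$ is bounded we have $\delta^2\lesssim\delta$, so the gradient term is also $\lesssim\delta\|u\|_{H^1(\Omega^+)}^2$. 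Collecting both contributions yields $\|u\|_{L^2(S_\delta\cap\Omega^+)}^2\lesssim\delta\|u\|_{H^1(\Omega^-\cup\Omega^+)}^2$; the identical bound on the $\Omega^-$ side together with summation and a square root give the claim.

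The main obstacle is the coordinate setup rather than the estimate itself: one must verify that for the $\delta$ at hand the normal tubular neighborhood is genuinely a diffeomorphism with a Jacobian uniformly bounded and bounded away from zero, which is precisely where the smoothness of $\Gamma$ and the smallness $\delta\lesssim h^2$ are used. If one prefers to avoid invoking the global tubular neighborhood theorem, the same conclusion follows chart by chart using a partition of unity subordinate to a finite atlas of $\Gamma$, with the finitely many constants absorbed into the hidden constant.
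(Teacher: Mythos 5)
Your proof is correct, but note that the paper itself gives no proof of this lemma at all: it is quoted directly from \cite[Lemma 2.1]{2010LiMelenkWohlmuthZou}, so there is no internal argument to compare against. What you have done is reconstruct, essentially, the argument of the cited reference: a tubular-coordinate (or locally flattened) description of the strip $S_\delta$, the fundamental theorem of calculus along normal lines, and the trace theorem on each subdomain; the reference works chart by chart, which is precisely the alternative you mention in your closing paragraph, so the two routes are the same argument in different clothing. Two small points should be made explicit if your text were to stand as a self-contained proof. First, the identity $u(\bfs,t)=u^+(\bfs,0)+\int_0^t\partial_\tau u(\bfs,\tau)\,\mathrm{d}\tau$ for a function that is merely $H^1(\Omega^+)$ holds only for almost every $\bfs\in\Gamma$ and needs justification, e.g.\ by density of $C^\infty(\overline{\Omega^+})$ in $H^1(\Omega^+)$ or by the absolute-continuity-on-lines characterization of Sobolev functions after the change of variables; as written it silently assumes pointwise values along every normal line. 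Second, the conclusion requires the hidden constant to be independent of $\delta$: this is indeed the case in your argument, since the Jacobian bounds depend only on the curvature of $\Gamma$ (once $\delta$ is below the reach) and the trace constant depends only on $\Omega^{\pm}$, but it is worth stating, because the entire point of the lemma as used in the paper is the explicit $\sqrt{\delta}$ scaling with $\delta\lesssim h^2\to 0$ while everything else stays uniformly bounded.
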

With the $\delta$-strip and \cite[Lemma 2.1]{2010LiMelenkWohlmuthZou} recalled below, we can control the error occurring in the mismatched region, and show Hypothesis \hyperref[asp:H6]{(H6)}.
\begin{lemma}
\label{lem_h6_verify}
Let $u\in H^2_0(\beta;\Omega)$. Under Assumption \hyperref[asp:A5]{(A5)}, Hypothesis \hyperref[asp:H6]{(H6)} holds.
\end{lemma}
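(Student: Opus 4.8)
The plan is to exploit that $\beta$ and $\beta_h$ are piecewise constant with the same pair of values $\beta^{\pm}$ and disagree \emph{only} on the mismatched region $\widetilde{\Omega}_h$, which Assumption \hyperref[asp:A5]{(A5)} confines to the thin strip $S_{\delta}$ with $\delta\lesssim h^2$. The $L^{\infty}$ bound is immediate: since $\beta_h$ takes values in $\{\beta^+,\beta^-\}$, we have $\|\beta_h\|_{\infty,\Omega}=\max(|\beta^+|,|\beta^-|)=\|\beta\|_{\infty,\Omega}$. For the remaining estimate, I first record that $\beta-\beta_h=0$ outside $\widetilde{\Omega}_h$ while $|\beta-\beta_h|\le|\beta^+-\beta^-|\lesssim\|\beta\|_{\infty,\Omega}$ on $\widetilde{\Omega}_h$, so both terms in \eqref{betah_approx} reduce to controlling $\nabla u$ over $\widetilde{\Omega}_h$ and over the face pieces $\widetilde{F}=F\cap\widetilde{\Omega}_h$.

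For the volume term I would estimate
$$\sum_{K\in\mathcal{T}_h}\|(\beta-\beta_h)\nabla u\|^2_{0,K}\lesssim\|\nabla u\|^2_{0,\widetilde{\Omega}_h}\le\|\nabla u\|^2_{0,S_{\delta}}.$$
On each component of $\widetilde{\Omega}_h$ the gradient $\nabla u$ agrees with one of the extended gradients $\nabla u^{\pm}_E\in[H^1(\Omega)]^3$, so applying Lemma \ref{lem_delta} componentwise together with \eqref{sobolev_ext} gives
$$\|\nabla u\|^2_{0,S_{\delta}}\lesssim\delta\,\|u\|^2_{2,\Omega^-\cup\Omega^+}\lesssim h^2\,\|u\|^2_{2,\Omega^-\cup\Omega^+},$$
using $\delta\lesssim h^2$. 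This settles the first term.

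The face term is the main obstacle, since it requires controlling the trace of $\nabla u$ on $\widetilde{F}$, whose supporting height towards $K$ may shrink to zero so that the ordinary trace inequality is unavailable. Here Assumption \hyperref[asp:A5]{(A5)} is designed precisely to save the day: it supplies a pyramid $P_F\subseteq S_{\delta}\cap\omega_K$ with base $\widetilde{F}$ and supporting height $\mathcal{O}(h_K)$, so that the same scaled trace device used in \eqref{lem_cylin_trace_eq2} applies. Writing $g$ for a component of the relevant extended gradient $\nabla u^{\pm}_E$, this device yields
$$h_K\,\|g\|^2_{0,\widetilde{F}}\lesssim\|g\|^2_{0,P_F}+h_K^2\,|g|^2_{1,P_F}.$$
Summing over all faces and elements, the first contribution is bounded by $\|g\|^2_{0,S_{\delta}}\lesssim h^2\|u\|^2_{2,\Omega^-\cup\Omega^+}$ via Lemma \ref{lem_delta} as above, and the second by $h^2\,|g|^2_{1,\Omega^-\cup\Omega^+}\lesssim h^2\|u\|^2_{2,\Omega^-\cup\Omega^+}$; in both sums I use that the patches $\omega_K$, hence the pyramids $P_F\subseteq\omega_K$, have uniformly bounded overlap because $\mathcal{T}_h$ is shape regular by \hyperref[asp:A5]{(A5)}. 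Combining the volume and face estimates with $|\beta-\beta_h|\lesssim\|\beta\|_{\infty,\Omega}$ on $\widetilde{\Omega}_h$ gives \eqref{betah_approx} with the modified right-hand side, completing the verification of Hypothesis \hyperref[asp:H6]{(H6)}.
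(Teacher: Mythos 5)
Your proposal is correct and follows essentially the same route as the paper: both split \eqref{betah_approx} into the volume term, which is confined to the mismatched region and handled by the $\delta$-strip estimate of Lemma \ref{lem_delta} with $\delta\lesssim h^2$, and the face term, which is handled by the scaled trace inequality on the pyramid $P_F\subseteq S_{\delta}\cap\omega_K$ supplied by Assumption \hyperref[asp:A5]{(A5)}, followed by Lemma \ref{lem_delta} and the extension bound \eqref{sobolev_ext}. Your write-up merely makes explicit a few details the paper leaves implicit (the trivial $L^{\infty}$ bound, $|\beta-\beta_h|\lesssim\|\beta\|_{\infty,\Omega}$ on $\widetilde{\Omega}_h$, and the bounded overlap of the patches $\omega_K$ when summing).
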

\begin{proof}
It follows from the definition of $\beta_h$ that $\| \beta \nabla u - \beta_h \nabla u \|_{0,K}\lesssim \|\nabla u \|_{0,K\cap S_{\delta}}$. 
As for the estimates on faces, it only appears on those intersecting with the interface. 
Given an interface face $F$, we consider the pyramid $P_F$ from Assumption \hyperref[asp:A5]{(A5)}, by the trace inequality, there holds
\begin{equation}
\begin{split}
\label{lem_h6_verify_eq1}
h^{1/2}_K \| \beta \nabla u\cdot\bfn -  \beta_h \nabla u \cdot\bfn \|_{0,F} & \lesssim \sum_{s=\pm} \| \nabla u^{s}_E\cdot \bfn \|_{0,\widetilde{F}}  \lesssim \sum_{s=\pm}    |u^{s}_E |_{H^1(P_F)} + h_K | u^{s}_E |_{H^2( P_F)} \\
& \lesssim  |u|_{E,1,S_\delta \cap \omega_K}  + h_K |u|_{E,2,S_\delta \cap \omega_K}.
\end{split}
\end{equation}
Summing \eqref{lem_h6_verify_eq1} over all the interface elements and using Lemma \ref{lem_delta} and \eqref{sobolev_ext} finishes the proof.
\end{proof}

Now, we recall some existing results for the IFE spaces.
\begin{lemma}[Lemma 4.1, \cite{2022CaoChenGuo}]
\label{lem_trace_inequa}
The following trace inequality holds for each $K$:
\begin{equation}
\label{lem_trace_inequa_eq02}
\| \nabla v_h \|_{0, \partial K} \lesssim h^{-1/2}_K \| \nabla v_h \|_{0, K}, ~~~~ \forall v_h\in \mathcal{W}_h(K).
\end{equation}
\end{lemma}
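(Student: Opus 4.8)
The plan is to exploit the fact that for any $v_h\in\mathcal{W}_h(K)$ the gradient $\nabla v_h$ is piecewise constant, equal to a fixed vector $\nabla v_h^+$ on $K^+_h$ and a fixed vector $\nabla v_h^-$ on $K^-_h$. This reduces the claimed trace inequality to a purely algebraic comparison between these two vectors, weighted by the relevant geometric measures. First I would split both sides: since $\Gamma^K_h$ cuts $\partial K$ into the two pieces $\partial K\cap\overline{K^+_h}$ and $\partial K\cap\overline{K^-_h}$ on which $\nabla v_h$ equals $\nabla v_h^+$ and $\nabla v_h^-$ respectively, we have
\begin{equation}
\label{plan_trace_1}
\| \nabla v_h \|^2_{0,\partial K} = |\partial K\cap\overline{K^+_h}|\, |\nabla v_h^+|^2 + |\partial K\cap\overline{K^-_h}|\, |\nabla v_h^-|^2,
\end{equation}
and likewise $\| \nabla v_h \|^2_{0,K} = |K^+_h|\,|\nabla v_h^+|^2 + |K^-_h|\,|\nabla v_h^-|^2$.

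The two crucial ingredients are then as follows. First, by \eqref{gradv} the two constant gradients are linked through $\nabla v_h^-=M^-\nabla v_h^+$ with $M^+=(M^-)^{-1}$, and since $\bar{\bft}^1_K,\bar{\bft}^2_K,\bar{\bfn}_K$ are orthonormal, the norms $\|M^-\|$ and $\|M^+\|$ are bounded by constants depending only on the ratio $\beta^+/\beta^-$; hence $|\nabla v_h^+|\simeq|\nabla v_h^-|$ with the hidden constant independent of $K$ and of the cut. Second, because $\mathcal{T}_h$ is a shape-regular tetrahedral mesh by Assumption \hyperref[asp:A5]{(A5)}, the element $K$ itself (not its cut pieces) satisfies $|K|\simeq h^3_K$ and $|\partial K|\simeq h^2_K$. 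Bounding \eqref{plan_trace_1} from above by $|\partial K|\max(|\nabla v_h^+|^2,|\nabla v_h^-|^2)\lesssim h^2_K|\nabla v_h^+|^2$ handles the numerator.

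For the denominator I would use the observation that $|K^+_h|+|K^-_h|=|K|\simeq h^3_K$, so at least one of the two subvolumes is $\gtrsim h^3_K$; taking WLOG $|K^+_h|\gtrsim h^3_K$ gives $\|\nabla v_h\|^2_{0,K}\ge |K^+_h|\,|\nabla v_h^+|^2\gtrsim h^3_K|\nabla v_h^+|^2$. Combining the two bounds yields $\|\nabla v_h\|^2_{0,\partial K}\lesssim h^2_K|\nabla v_h^+|^2\lesssim h^{-1}_K\|\nabla v_h\|^2_{0,K}$, and a square root finishes the proof. The step I expect to be the main obstacle is precisely the possibility that the interface cuts off an arbitrarily thin sliver, so that one of $|K^\pm_h|$ vanishes and a naive trace inequality on that piece degenerates; the resolution is that the gradient linkage $|\nabla v_h^+|\simeq|\nabla v_h^-|$ lets us control both boundary contributions by the larger piece, so no shape regularity of the \emph{cut subelements} is ever needed—only that of the ambient tetrahedron $K$.
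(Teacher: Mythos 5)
Your proposal is correct. Note first that the paper itself offers no proof of this lemma: it is imported verbatim from Lemma 4.1 of the cited reference \cite{2022CaoChenGuo}, with only the remark that the result is non-trivial because $\nabla v_h$ is merely an $L^2$ function (so the standard $H^1$ trace inequality does not apply). Your self-contained argument supplies exactly the right mechanism, and it is essentially the standard one used in the IFE literature for linear elements. The three ingredients all check out: (i) since $v_h$ is piecewise linear, both the boundary and volume norms of $\nabla v_h$ reduce to the algebraic quantities $|\partial K\cap\overline{K^{\pm}_h}|\,|\nabla v^{\pm}_h|^2$ and $|K^{\pm}_h|\,|\nabla v^{\pm}_h|^2$; (ii) writing $Q=[\bar{\bft}^1_K,\bar{\bft}^2_K,\bar{\bfn}_K]$ (orthogonal), one gets $M^- = Q\,\mathrm{diag}(1,1,\beta^+/\beta^-)\,Q^T$, so $\|M^{\pm}\|$ depends only on the coefficient contrast, giving $|\nabla v^+_h|\simeq|\nabla v^-_h|$ uniformly in $K$ and in the cut location; (iii) shape regularity of the \emph{uncut} tetrahedron (Assumption \hyperref[asp:A5]{(A5)}) gives $|\partial K|\simeq h^2_K$ and, by pigeonhole, $\max(|K^+_h|,|K^-_h|)\ge |K|/2\gtrsim h^3_K$, and the gradient equivalence lets the larger piece control both constants regardless of which side it lies on. This correctly resolves the thin-sliver degeneracy, which is precisely why no shape regularity of the cut subelements is needed. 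One cosmetic point: the lemma is stated ``for each $K$,'' including non-interface elements, but there $\nabla v_h$ is a single constant and your computation degenerates to the trivial one-piece case, so nothing is missing.
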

Note that this is non-trivial as $\nabla v_h$ are merely $L^2$ functions. 
The next one concerns interpolation errors gauging by a specially-designed quasi-interpolation:
\begin{equation}
\label{quaInterp_1}
J_K u =
\begin{cases}
      & J_K^-u = \mathrm{P}^1_{\omega_K} u^-_E, ~~~~~~~~~~~~~~~~~~~~~~~~~~~~~~~~~~~~~~~~~~~~~~~~~~~~~ \text{in} ~ \omega^+_K, \\
      & J_K^+u =\mathrm{P}^1_{\omega_K} u^-_E + (\tilde{\beta} - 1) \nabla \mathrm{P}^1_{\omega_K} u^-_E\cdot\bar{\bfn}_K ( \bfx - \bfx_K )\cdot\bar{\bfn}_K, ~~~ \text{in} ~ \omega^-_K,
\end{cases}
\end{equation}
where $\tilde{\beta} = \beta^-/\beta^+$.
One can easily show $J_K u = \bfp_h\cdot(\bfx-\bfx_K) +c$ with $\bfp^-_h  = \nabla  \mathrm{P}^1_{\omega_K} u^-_E $ 
and $\bfp^+_h = \bfp_h^- +(\tilde{\beta}-1)(\bfp^-_h\cdot\bar{\bfn}_K) \bar{\bfn}_K$ 
and $c =  \mathrm{P}^1_{\omega_K} u^-_E(\bfx_K)$, and thus $J_K u$ is an IFE function by \eqref{lem_ife_space_eq0}.
In the following discussion, $J^{\pm}_Ku$ are regarded as polynomials of which each is defined on the entire patch $\omega_K$ instead of just the sub-patches.

\begin{theorem}[Theorem 4.1, \cite{2020GuoLin}]
\label{thm_interp}
Let $u\in H^2_0(\beta;\Omega)$. Then, for every $K\in\mathcal{T}^{i}_h$, there holds
\begin{equation}
\label{thm_interp_eq01}
h^j_K |u^{\pm}_E - J^{\pm}_Ku|_{H^j(\omega_K)} \lesssim h^2_K \|u \|_{E,2,\omega_K}  ,  ~~~~ j=0,1.
\end{equation}
\end{theorem}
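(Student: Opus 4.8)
The plan is to prove the two estimates in \eqref{thm_interp_eq01} separately for the $-$ and $+$ components, using that, per the remark preceding the statement, both $J^-_Ku$ and $J^+_Ku$ are genuine linear polynomials on the whole patch $\omega_K$, and that $\omega_K$ is a patch of the shape-regular tetrahedral mesh of Assumption \hyperref[asp:A5]{(A5)}, so $\conv(\omega_K)\simeq\omega_K$ has diameter $\simeq h_K$. The $-$ component is immediate: since $J^-_Ku=\mathrm{P}^1_{\omega_K}u^-_E$ by \eqref{quaInterp_1}, Lemma \ref{lem_proj} with $m=1$, $k=j$ gives
\[
|u^-_E-J^-_Ku|_{j,\omega_K}=|u^-_E-\mathrm{P}^1_{\omega_K}u^-_E|_{j,\omega_K}\lesssim h^{2-j}_K|u^-_E|_{2,\omega_K}\le h^{2-j}_K\|u\|_{E,2,\omega_K}.
\]

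For the $+$ component, I would first insert the linear projection of $u^+_E$ and use the triangle inequality,
\[
|u^+_E-J^+_Ku|_{j,\omega_K}\le|u^+_E-\mathrm{P}^1_{\omega_K}u^+_E|_{j,\omega_K}+|\mathrm{P}^1_{\omega_K}u^+_E-J^+_Ku|_{j,\omega_K},
\]
where the first term is again controlled by $h^{2-j}_K|u^+_E|_{2,\omega_K}$ via Lemma \ref{lem_proj}. The second term $q:=\mathrm{P}^1_{\omega_K}u^+_E-J^+_Ku$ is a difference of two linear polynomials, hence linear on $\omega_K$; by the scaling of a linear polynomial on the shape-regular patch, the bound $|q|_{j,\omega_K}\lesssim h^{2-j}_K\|u\|_{E,2,\omega_K}$ follows once one shows $|\nabla q|\lesssim h^{-1/2}_K\|u\|_{E,2,\omega_K}$, together with a control of the constant part $|q(\bfx_K)|\lesssim h^{1/2}_K\|u\|_{E,2,\omega_K}$ at an interface point $\bfx_K\in\Gamma^K_h$ (at which the correction term of $J^+_Ku$ vanishes, so $q(\bfx_K)=\mathrm{P}^1_{\omega_K}u^+_E(\bfx_K)-\mathrm{P}^1_{\omega_K}u^-_E(\bfx_K)$, controlled through the continuity $u^+=u^-$ on $\Gamma$ and projection errors).

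The crux is the estimate of $\nabla q=\nabla\mathrm{P}^1_{\omega_K}u^+_E-\bfp^+_h$, where by \eqref{quaInterp_1} the constant vector $\bfp^+_h:=\nabla J^+_Ku=\bfp^-_h+(\tilde\beta-1)(\bfp^-_h\cdot\bar\bfn_K)\bar\bfn_K=M^+\bfp^-_h$, with $\bfp^-_h=\nabla\mathrm{P}^1_{\omega_K}u^-_E$ and $M^+$ from \eqref{gradv}. Comparing $\bfp^+_h$ with the pointwise gradient of $u^+_E$, I would split
\[
\nabla u^+_E-\bfp^+_h=\bigl(\nabla u^+_E-M^+\nabla u^-_E\bigr)+M^+\bigl(\nabla u^-_E-\bfp^-_h\bigr).
\]
The second summand is controlled by the projection error $\nabla u^-_E-\mathrm{P}^0_{\omega_K}\nabla u^-_E$ through Lemma \ref{lem_proj}, using the uniform boundedness $\|M^+\|\lesssim 1$, which follows from \eqref{gradv} since $\bar\bft^1_K,\bar\bft^2_K,\bar\bfn_K$ form an orthonormal frame and $\beta^\pm$ are fixed constants. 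The first summand $\nabla u^+_E-M^+\nabla u^-_E$ vanishes on the \emph{exact} interface $\Gamma$ by virtue of the jump conditions \eqref{jump_cond} (tangential continuity of $\nabla u$ and continuity of the flux $\beta\nabla u\cdot\bfn$); on the discrete interface $\Gamma^K_h$ it is nonzero only through the mismatch between $\bar\bfn_K$ and the exact normal and through the displacement from $\Gamma$ to $\Gamma_h$, so it is effectively supported in the mismatched region $\widetilde\Omega_h\subseteq S_\delta$.

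\textbf{Main obstacle.} This last term is where the real work lies: one must combine the optimal geometric accuracy $\delta\lesssim h^2$ of Assumption \hyperref[asp:A5]{(A5)}, the $\delta$-strip estimate of Lemma \ref{lem_delta}, and the Sobolev extension bound \eqref{sobolev_ext} to absorb the normal-approximation and strip contributions into $h_K\|u\|_{E,2,\omega_K}$. This is precisely why the full $H^2$ norm $\|u\|_{E,2,\omega_K}$, rather than the seminorm $|u|_{E,2,\omega_K}$, must appear on the right-hand side of \eqref{thm_interp_eq01}: the lower-order and geometric error terms cannot be bounded by the seminorm alone. Assembling the two summands yields the required bound on $\nabla q$, and combining the $\pm$ estimates completes the proof for both $j=0$ and $j=1$.
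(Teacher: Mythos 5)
You should note at the outset that the paper never proves Theorem \ref{thm_interp}: it is imported verbatim from \cite{2020GuoLin} (Theorem 4.1 there), so there is no internal proof to compare against and your argument must stand on its own. Its routine parts do stand: the $-$ component follows from Lemma \ref{lem_proj} exactly as you say; the reduction of the $+$ component to the linear polynomial $q=\mathrm{P}^1_{\omega_K}u^+_E-J^+_Ku$ with the two target bounds $|\nabla q|\lesssim h^{-1/2}_K\|u\|_{E,2,\omega_K}$ and $|q(\bfx_K)|\lesssim h^{1/2}_K\|u\|_{E,2,\omega_K}$ is correct scaling bookkeeping; and $\|M^{\pm}\|\lesssim 1$ is right. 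The gap is that these two target bounds are precisely where the theorem is hard, and the justifications you sketch for them do not work. For the gradient bound, the claim that $\nabla u^+_E-M^+\nabla u^-_E$ vanishes on $\Gamma$ and is ``effectively supported in the mismatched region $\widetilde\Omega_h\subseteq S_\delta$'' is wrong on two counts: off the interface the two extensions are completely unrelated, so this function is genuinely nonzero throughout $\omega_K$ and is not supported in any thin strip; and even on $\Gamma$ it does not vanish, because $M^+$ in \eqref{gradv} is built from the fixed discrete normal $\bar{\bfn}_K$, so the jump conditions \eqref{jump_cond} only give a pointwise bound of order $h_K|\nabla u^-_E|$ there (normal mismatch). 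The actual mechanism for its $L^2(\omega_K)$-smallness is a Poincar\'e-type inequality with inhomogeneous trace data: the trace on $\Gamma\cap\omega_K$ is $O(h_K)$, every point of $\omega_K$ lies within $O(h_K)$ of $\Gamma$, and integrating transversally pays $h_K$ times an $H^1$-seminorm. Neither this inequality nor the trace estimates needed to control the data on $\Gamma$ appear anywhere in your proposal.

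The anchor estimate is in even worse shape, because it is not merely deferred but unobtainable by the route you indicate. Writing $q(\bfx_K)=\mathrm{P}^1_{\omega_K}w(\bfx_K)$ with $w=u^+_E-u^-_E$ (the correction term of $J^+_Ku$ vanishes at $\bfx_K$, as you observe), the route ``continuity $u^+=u^-$ on $\Gamma$ plus projection errors'' --- i.e., contraction of $\mathrm{P}^1_{\omega_K}$, Poincar\'e for $w$ vanishing on $\Gamma$, and an inverse estimate for linear polynomials --- yields only $|q(\bfx_K)|\lesssim h^{-1/2}_K|w|_{1,\omega_K}\lesssim h^{-1/2}_K\|u\|_{E,1,\omega_K}$, a full power of $h_K$ short of what is needed, since $\|u\|_{E,1,\omega_K}$ cannot be traded for $h_K\|u\|_{E,2,\omega_K}$ on a small patch. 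To recover the missing power one must also invoke the flux condition: subtract from $w$ the linear function $c\,(\bfx-\bfx_K)\cdot\bar{\bfn}_K$, with $c$ an averaged normal derivative of $w$, which is reproduced by $\mathrm{P}^1_{\omega_K}$ and vanishes at $\bfx_K$; the remainder then has both zero trace and $O(h_K)$-small normal derivative on $\Gamma$, and two applications of the Poincar\'e-with-trace argument give the $h^{1/2}_K$ bound. This cancellation is exactly the reason the correction term $(\tilde{\beta}-1)(\nabla\mathrm{P}^1_{\omega_K}u^-_E\cdot\bar{\bfn}_K)(\bfx-\bfx_K)\cdot\bar{\bfn}_K$ is built into \eqref{quaInterp_1}, and it is absent from your sketch; so is any treatment of the $O(h^2)$ displacement between $\Gamma$ and $\Gamma^K_h$ from Assumption \hyperref[asp:A5]{(A5)}, which enters because $\bfx_K$ lies on $\Gamma^K_h$ rather than on $\Gamma$. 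As it stands, the proposal is an outline that defers, rather than closes, the estimates that constitute the proof.
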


With the theorem above, we can estimate the projection errors. 
Similarly, each of $\Pi^{\pm}_Ku$ is regarded as a polynomial defined on the whole patch. 
The key is to estimate $u^{\pm}_E - \Pi^{\pm}_K u$ on the whole element.
\begin{lemma}
\label{lem_beta_proj}
Let $u\in H^2_0(\beta;\Omega)$. Then, for every $K\in\mathcal{T}^{i}_h$, there holds
\begin{equation}
\begin{split}
\label{lem_beta_proj_eq0}
\|\nabla(u^{\pm}_E - \Pi^{\pm}_K u) \|_{0, K} & \lesssim  h_K \|u \|_{E,2,\omega_K} + \| u \|_{E,1,\omega_K \cap S_{\delta}}  .
\end{split}
\end{equation}
\end{lemma}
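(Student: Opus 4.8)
The plan is to estimate the projection error by comparison with the computable quasi-interpolant $J_K u$ from \eqref{quaInterp_1}, which by construction lies in the IFE space $\mathcal{W}_h(K)$. I treat the $+$ component only, the $-$ component being entirely symmetric. The first step is the triangle inequality
\[
\|\nabla(u^+_E - \Pi^+_K u)\|_{0,K} \le \|\nabla(u^+_E - J^+_K u)\|_{0,K} + \|\nabla(J^+_K u - \Pi^+_K u)\|_{0,K},
\]
where the first term is controlled immediately by Theorem \ref{thm_interp} with $j=1$ (using $K\subseteq\omega_K$), giving $\|\nabla(u^+_E - J^+_K u)\|_{0,K}\lesssim h_K\|u\|_{E,2,\omega_K}$.

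For the second term I would first pass from the full-element polynomial gradient to the IFE-function gradient. Since $J_K u-\Pi_K u\in\mathcal{W}_h(K)$, its one-sided constant gradients are linked by the matrices $M^\pm$ of \eqref{gradv}, whose norms and inverse norms depend only on the contrast $\beta^-/\beta^+$; hence $\|\nabla(J^+_K u-\Pi^+_K u)\|_{0,K}\simeq\|\nabla(J_K u-\Pi_K u)\|_{0,K}$. Inserting $u$ and invoking the best-approximation property of $\Pi_K$ --- namely that $\nabla\Pi_K u$ is the $\beta_h$-weighted $L^2$-projection of $\nabla u$ onto $\nabla\mathcal{W}_h(K)$ by the Galerkin identity \eqref{Pi_proj}, combined with $\beta_h$ being bounded above and below (Hypothesis \hyperref[asp:H6]{(H6)} and positivity of $\beta^\pm$) --- yields $\|\nabla(u-\Pi_K u)\|_{0,K}\lesssim\|\nabla(u-J_K u)\|_{0,K}$. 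Thus the entire estimate reduces to bounding $\|\nabla(u-J_K u)\|_{0,K}$, where $u$ now denotes the exact solution restricted to $K$.

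The final and most delicate step is this reduced bound, which forces a reconciliation of the exact interface $\Gamma$ with its discrete counterpart $\Gamma_h$. On $K^+_h\cap\Omega^+$ the solution $u$ coincides with $u^+_E$, so the integrand there is $\nabla(u^+_E-J^+_K u)$ and is again absorbed by Theorem \ref{thm_interp}. On the mismatched region $K^+_h\cap\Omega^-$, which lies inside $S_\delta\cap\omega_K$ by Assumption \hyperref[asp:A5]{(A5)}, the solution equals $u^-_E$ instead, so I would write $\nabla(u-J^+_K u)=\nabla(u^+_E-J^+_K u)+\nabla(u^-_E-u^+_E)$ and bound the extra contribution by $\|\nabla u^+_E\|_{0,S_\delta\cap\omega_K}+\|\nabla u^-_E\|_{0,S_\delta\cap\omega_K}\lesssim\|u\|_{E,1,\omega_K\cap S_\delta}$. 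Collecting all pieces gives $\|\nabla(u-J_K u)\|_{0,K}\lesssim h_K\|u\|_{E,2,\omega_K}+\|u\|_{E,1,\omega_K\cap S_\delta}$ and hence the assertion. The main obstacle I anticipate is precisely this geometric bookkeeping: correctly identifying which Sobolev extension governs $u$ on each piece of $K^\pm_h$ and ensuring every strip contribution is charged to the $H^1$ term over $\omega_K\cap S_\delta$ rather than to the $H^2$ term, so that no factor degenerating with the element shape is introduced.
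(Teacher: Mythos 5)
Your proof is correct and takes essentially the same route as the paper's: both rest on the quasi-interpolant $J_K u$ with Theorem \ref{thm_interp}, the $\beta_h$-weighted best-approximation property of $\Pi_K$ from \eqref{Pi_proj}, the gradient relation \eqref{gradv} with $\|M^{\pm}\|\lesssim 1$, and a $\delta$-strip correction for the mismatch between $\Gamma$ and $\Gamma_h$. The only difference is organizational --- the paper estimates separately on $K^-_h$ and $K^+_h$, whereas you use the constancy of IFE gradients to get the equivalence $\|\nabla(J^{\pm}_K u-\Pi^{\pm}_K u)\|_{0,K}\simeq\|\nabla(J_K u-\Pi_K u)\|_{0,K}$ on all of $K$ at once, a mild streamlining rather than a different argument.
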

\begin{proof}
 For simplicity, we only show \eqref{lem_beta_proj_eq0} for $-$ component. 
 By the projection property, 
\begin{equation}
\begin{split}
\label{lem_beta_proj_eq1}
\| \sqrt{\beta_h} \nabla(u^-_E - \Pi^{-}_K u) \|_{L^2(K^-_h)}  \le & \| \sqrt{\beta_h} \nabla (u -  \Pi_K u) \|_{0, K} +  \| \sqrt{\beta_h} \nabla u \|_{E,0, K \cap S_{\delta}} \\
 \le & \| \sqrt{\beta_h} \nabla (u -  J_K u) \|_{0, K} +  \| \sqrt{\beta_h} \nabla u \|_{E,0, K \cap S_{\delta}}  \\
  \le & \sum_{s=\pm}  \|\sqrt{\beta^s_h}\nabla (u^s_E -  J^s_K u) \|_{0, K} +   2 \| \sqrt{\beta_h} \nabla u \|_{E,0, K \cap S_{\delta}}
\end{split}
\end{equation}
which yields \eqref{lem_beta_proj_eq0} on $K^-_h$ by Theorem \ref{thm_interp}. As for $K^+_h$, we note that
\begin{equation}
\begin{split}
\label{lem_beta_proj_eq2}
\|\nabla(u^-_E - \Pi^{-}_K u) \|_{0, K^+_h} &\le \| \nabla(u^-_E - J^-_K u) \|_{0, K^+_h} + \|\nabla(\Pi^{-}_K u - J^-_K u) \|_{0, K^+_h}.
\end{split}
\end{equation}
The first term in the right-hand side above directly follows from \eqref{thm_interp_eq01}. 
For the second term, as $v_h:=\Pi_K u - J_K u$ is an IFE function, by \eqref{gradv} and $\| M^+ \| \lesssim 1$, we obtain
\begin{equation}
\begin{split}
\label{lem_beta_proj_eq3}
 \|  \nabla v^-_h \|_{0, K^+_h}  \lesssim \|  \nabla v^+_h \|_{0, K^+_h} 
 \le  \| \nabla(\Pi^{+}_K u - u^+_E) \|_{0, K^+_h} +  \| \nabla( u^+_E - J^+_K u) \|_{0, K^+_h}
\end{split}
\end{equation}
where the estimation of the first term in the right-hand side above is similar to \eqref{lem_beta_proj_eq1} and the estimate of the second term follows from Theorem \ref{thm_interp}. 
Combining these estimates, we obtain \eqref{lem_beta_proj_eq0}.
\end{proof}

Now, we are ready to examine Hypothesis \hyperref[asp:H3]{(H3)}-\hyperref[asp:H5]{(H5)}.

\begin{lemma}
\label{projection term estimate}
Let $u\in H^2_0(\beta;\Omega)$. Under Assumptions \hyperref[asp:A1]{(A1)}, \hyperref[asp:A2]{(A2)} and \hyperref[asp:A5]{(A5)}, Hypothesis \hyperref[asp:H3]{(H3)} holds. 
\end{lemma}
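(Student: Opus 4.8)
The plan is to mirror the argument of Lemma~\ref{lem_assump_verify1}, but with the unweighted $L^2$ projection replaced by the $\beta_h$-weighted projection $\Pi_K$ onto the IFE space $\mathcal{W}_h(K)$, and to reduce the entire quantity $\vertiii{u-u_I}_K$ to boundary interpolation errors of $u-u_I$ on $\partial K$. Writing $w:=u-u_I$ and recalling $\vertiii{w}_K^2 = \|\sqrt{\beta_h}\nabla\Pi_K w\|_{0,K}^2 + \|w-\Pi_K w\|_{S_K}^2$, I would first treat the projection term. Set $\bfq_h:=\nabla\Pi_K w\in\bfP^{\beta}_h(K)$. By the defining property \eqref{Pi_proj} of $\Pi_K$ tested against $\Pi_K w\in\mathcal{W}_h(K)$, one has $\|\sqrt{\beta_h}\nabla\Pi_K w\|_{0,K}^2=(\beta_h\nabla w,\bfq_h)_K$. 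Since $\ddiv(\beta_h\bfq_h)=0$ and the normal flux $\beta_h\bfq_h\cdot\bar{\bfn}_K$ is continuous across $\Gamma^K_h$ by \eqref{gradv} while $w\in H^1(K)$ is itself continuous there, integration by parts over $K^{\pm}_h$ makes all interface contributions cancel and leaves only
\[ \|\sqrt{\beta_h}\nabla\Pi_K w\|_{0,K}^2=(\beta_h\bfq_h\cdot\bfn, w)_{\partial K}\le \|\beta_h\bfq_h\cdot\bfn\|_{0,\partial K}\|w\|_{0,\partial K}. \]
Using $\|\beta_h\|_{\infty}\lesssim 1$ and the IFE trace inequality of Lemma~\ref{lem_trace_inequa} (applicable precisely because $\Pi_K w\in\mathcal{W}_h(K)$) gives $\|\bfq_h\|_{0,\partial K}\lesssim h_K^{-1/2}\|\bfq_h\|_{0,K}\simeq h_K^{-1/2}\|\sqrt{\beta_h}\bfq_h\|_{0,K}$, and cancelling one factor yields $\|\sqrt{\beta_h}\nabla\Pi_K w\|_{0,K}\lesssim h_K^{-1/2}\|u-u_I\|_{0,\partial K}$.

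Next I would control the stabilization term. Since $\|v\|_{S_K}=h_K^{1/2}|v|_{1,\partial K}$ for the choice \eqref{stab_f}, the triangle inequality gives $\|w-\Pi_K w\|_{S_K}\le h_K^{1/2}|u-u_I|_{1,\partial K}+h_K^{1/2}|\Pi_K w|_{1,\partial K}$. The second piece is handled by bounding the face gradient by the full gradient, applying Lemma~\ref{lem_trace_inequa} once more, and using $\beta_h\gtrsim 1$ with the projection-term bound above, which returns $h_K^{1/2}|\Pi_K w|_{1,\partial K}\lesssim\|\sqrt{\beta_h}\nabla\Pi_K w\|_{0,K}\lesssim h_K^{-1/2}\|u-u_I\|_{0,\partial K}$. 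Thus the whole of $\vertiii{u-u_I}_K$ is reduced to the two boundary interpolation quantities $\|u-u_I\|_{0,\partial K}$ and $|u-u_I|_{1,\partial K}$.

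The hard part will be establishing the interface analogue of Lemma~\ref{lem_uI_face}, namely $\|u-u_I\|_{0,\partial K}\lesssim h_K^{3/2}\|u\|_{E,2,\omega_K}$ and $|u-u_I|_{1,\partial K}\lesssim h_K^{1/2}\|u\|_{E,2,\omega_K}$. I would argue triangle by triangle over $\mathcal{T}_h(\partial K)$: because the boundary triangulation resolves $\Gamma^K_h$ (the intersection points are vertices of the triangulation), each triangle $T$ lies entirely in $\overline{\Omega^{+}_h}$ or $\overline{\Omega^{-}_h}$, where $u$ coincides with one of the Sobolev extensions $u^{\pm}_E$ except on the sliver $T\cap\widetilde{\Omega}_h$. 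On the clean part the estimates of Lemma~\ref{lem_uI_face}, with $\conv(K)$ replaced by the shape-regular patch $\omega_K$, apply essentially verbatim since the background mesh is shape regular, so none of the degeneracy of Section~\ref{sec:fitted_1} intervenes; the remaining contribution is a surface integral over $\widetilde{\Omega}_h\cap\partial K$, which I would absorb using Assumption~\hyperref[asp:A5]{(A5)} and the $\delta$-strip estimate of Lemma~\ref{lem_delta}, converting the strip trace into a volume bound via the pyramid $P_F\subseteq S_{\delta}\cap\omega_K$ supplied by \hyperref[asp:A5]{(A5)}. Equivalently, one may insert the smooth quasi-interpolation $J_Ku$ of \eqref{quaInterp_1}, exploit that $J_Ku$ is affine on each boundary triangle to write $u_I-J_Ku=I_{\partial K}(u-J_Ku)$, and then bound $u-J_Ku$ by Theorem~\ref{thm_interp} together with the same $\delta$-strip control of the mismatch. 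Reconciling the nodal VEM interpolation, which is blind to $\Gamma$, with the piecewise $H^2$ regularity measured against $\Gamma$ is the genuine crux of the lemma.

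Finally I would assemble the pieces. Combining the first two paragraphs gives $\vertiii{u-u_I}_K\lesssim h_K\|u\|_{E,2,\omega_K}$ on each interface element, the non-interface elements being standard. Squaring, summing over $K\in\mathcal{T}_h$, using the finite overlap of the patches $\omega_K$ on the shape-regular mesh, and applying the Sobolev extension bound \eqref{sobolev_ext} then yields $\sum_{K}\vertiii{u-u_I}_K^2\lesssim h^2\|u\|_{2,\Omega^-\cup\Omega^+}^2$, which is Hypothesis~\hyperref[asp:H3]{(H3)} with the right-hand side modified for the piecewise regularity as announced.
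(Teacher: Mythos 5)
Your proposal follows essentially the same route as the paper's proof: the defining property of $\Pi_K$ plus integration by parts to reduce $\|\sqrt{\beta_h}\nabla\Pi_K(u-u_I)\|_{0,K}$ to a boundary pairing, the IFE trace inequality of Lemma~\ref{lem_trace_inequa} combined with boundary interpolation estimates, the stabilization term handled as in \eqref{assump_verify2}, and summation via Lemma~\ref{lem_delta} and \eqref{sobolev_ext}. The only difference is presentational: where you re-derive an ``interface analogue'' of the boundary interpolation bounds, the paper simply invokes Lemma~\ref{lem_uI_face} directly (its statement is already phrased in the extension norms $\|\cdot\|_{E,2,\conv(K)}$ precisely so it applies here), so your third paragraph is a legitimate filling-in of what the paper leaves implicit rather than a different argument.
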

\begin{proof}
By the definition of projection and integration by parts, we immediately have
\begin{equation}
\label{projection term estimate eq1}
\begin{aligned}
 \| \sqrt{\beta_h} \nabla \Pi_K (u-u_I)\|_{0, K}^2 =& (\beta_h \nabla \Pi_K (u-u_I)\cdot \mathbf{ n},u-u_I)_{\partial K}\\
 \leq& \| \nabla \Pi_K (u-u_I)\cdot \mathbf{ n}\|_{0, \partial K} \| \beta_h (u-u_I) \|_{0, \partial K}.
\end{aligned}
\end{equation}
As $ \Pi_K (u-u_I)$ is an IFE function, the trace inequality in Lemma \ref{lem_trace_inequa} and Lemma \ref{lem_uI_face} lead to the estimate of $\|  \nabla \Pi_K (u-u_I)\|_{0, K}$. 
The estimate of $\| (u-u_I) - \Pi_K(u-u_I) \|_{S_K}$ is similar to \eqref{assump_verify2}.
Summing the estimates over all the interface elements and using Lemma \ref{lem_delta} and \eqref{sobolev_ext} finishes the proof.
\end{proof}

\begin{lemma}
\label{lem_boundflux_err}
Let $u\in H^2_0(\beta;\Omega)$. Under Assumptions \hyperref[asp:A1]{(A1)}, \hyperref[asp:A2]{(A2)} and \hyperref[asp:A5]{(A5)}, Hypothesis \hyperref[asp:H4]{(H4)} holds. 
\end{lemma}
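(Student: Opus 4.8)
The plan is to verify both parts of Hypothesis \hyperref[asp:H4]{(H4)}, namely \eqref{Pi_approx_eq1} and \eqref{Pi_approx_eq2}, on the interface elements only (on non-interface elements $\beta_h$ reduces to a single constant and the analysis is standard). A useful first reduction is that the stabilization term is subsumed by the boundary-gradient term: with $S_K$ as in \eqref{stab_f}, one has $\|u-\Pi_K u\|_{S_K}^2 = h_K\sum_{F\in\mathcal{F}_K}\|\nabla_F(u-\Pi_K u)\|_{0,F}^2 \le h_K\|\nabla(u-\Pi_K u)\|_{0,\partial K}^2$, since the surface gradient $\nabla_F$ is the tangential part of the full gradient. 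Hence it suffices to control $\sum_K\|\nabla(u-\Pi_K u)\|_{0,K}^2$ and $\sum_K h_K\|\nabla(u-\Pi_K u)\|_{0,\partial K}^2$, each by $h^2\|u\|_{E,2,\Omega}^2$.

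For the volume term \eqref{Pi_approx_eq1} I would split over the two pieces $K^{\pm}_h$, using $u=u^{\pm}_E$ on $K^{\pm}_h$ and $\Pi_K u|_{K^\pm_h}=\Pi^\pm_K u$, so that $\|\nabla(u-\Pi_K u)\|_{0,K}^2 \le \sum_{s=\pm}\|\nabla(u^s_E-\Pi^s_K u)\|_{0,K}^2$. Lemma \ref{lem_beta_proj} bounds each summand by $h_K^2\|u\|_{E,2,\omega_K}^2 + \|u\|_{E,1,\omega_K\cap S_\delta}^2$. Summing over interface elements, using the bounded overlap of the patches $\omega_K$ afforded by the shape-regularity in Assumption \hyperref[asp:A5]{(A5)}, gives $h^2\|u\|_{E,2,\Omega}^2$ from the first part, while the mismatch contribution $\sum_K\|u\|_{E,1,\omega_K\cap S_\delta}^2$ is absorbed by the $\delta$-strip estimate: applying Lemma \ref{lem_delta} to $u^\pm_E$ and to $\nabla u^\pm_E$, together with $\delta\lesssim h^2$ from \eqref{delta_strip_arg} and the extension bound \eqref{sobolev_ext}, yields $\|u\|_{E,1,S_\delta}^2\lesssim \delta\|u\|_{E,2,\Omega}^2\lesssim h^2\|u\|_{E,2,\Omega}^2$.

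For the boundary term \eqref{Pi_approx_eq2} the difficulty is that $\nabla\Pi_K u$ is only piecewise polynomial across $\Gamma^K_h$, so the ordinary polynomial trace inequality is unavailable. I would insert the IFE quasi-interpolant $J_K u$ and write $\|\nabla(u-\Pi_K u)\|_{0,\partial K}\le\|\nabla(u-J_K u)\|_{0,\partial K}+\|\nabla(J_K u-\Pi_K u)\|_{0,\partial K}$. For the first piece, on each $K^{\pm}_h$ the integrand is $u^\pm_E-J^\pm_K u$, a difference of an $H^2$ function and a whole-patch linear polynomial on the shape-regular $K$, so a standard trace inequality gives $\|\nabla(u^\pm_E-J^\pm_K u)\|_{0,\partial K}\lesssim h_K^{-1/2}\|\nabla(u^\pm_E-J^\pm_K u)\|_{0,K}+h_K^{1/2}|u^\pm_E|_{2,K}\lesssim h_K^{1/2}\|u\|_{E,2,\omega_K}$ after invoking Theorem \ref{thm_interp}, contributing $h_K\cdot h_K\|u\|_{E,2,\omega_K}^2$ to the weighted sum. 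For the second piece, $J_K u-\Pi_K u$ is an IFE function, so the special trace inequality Lemma \ref{lem_trace_inequa} applies and gives $\|\nabla(J_K u-\Pi_K u)\|_{0,\partial K}\lesssim h_K^{-1/2}\|\nabla(J_K u-\Pi_K u)\|_{0,K}$; bounding the right side by $\|\nabla(J_K u-u)\|_{0,K}+\|\nabla(u-\Pi_K u)\|_{0,K}$ and applying Theorem \ref{thm_interp} and Lemma \ref{lem_beta_proj} produces $h_K\|u\|_{E,2,\omega_K}+\|u\|_{E,1,\omega_K\cap S_\delta}$. After multiplying by $h_K$, squaring, summing over $K$, and reusing the $\delta$-strip bound exactly as in the volume case, both pieces close at $h^2\|u\|_{E,2,\Omega}^2$.

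The main obstacle I expect is precisely the boundary estimate for the projection: because $\Pi_K u$ is a genuinely non-polynomial IFE function whose gradient is discontinuous across the approximate interface, the naive route of a polynomial trace inequality fails, and one must route through the IFE interpolant $J_K u$ and lean on the non-trivial IFE trace inequality of Lemma \ref{lem_trace_inequa}. The second delicate point is keeping the geometric mismatch higher order; this is what forces the use of the $\delta$-strip argument (Lemma \ref{lem_delta}) together with the optimal accuracy $\delta\lesssim h^2$ in Assumption \hyperref[asp:A5]{(A5)}, without which the error from $\Gamma\neq\Gamma_h$ would not be absorbed into the $O(h)$ energy bound.
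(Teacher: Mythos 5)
Your route for the boundary estimate \eqref{Pi_approx_eq2} is genuinely different from the paper's and, modulo the issue below, it works: the paper splits $\nabla(u-\Pi_K u)$ on each face into $\sum_{s=\pm}\nabla(u^{s}_E-\Pi^{s}_K u)$ plus a mismatch term, exploiting that each $\Pi^{s}_K u$ is a single polynomial on the whole shape-regular element so the classical trace inequality applies, and then invokes Lemma \ref{lem_beta_proj}; you instead insert the quasi-interpolant $J_K u$ and handle the purely discrete difference $J_K u-\Pi_K u$ with the IFE trace inequality of Lemma \ref{lem_trace_inequa}. Your version concentrates all the non-polynomial difficulty in one discrete term at the cost of an extra appeal to Theorem \ref{thm_interp}; the paper's version is shorter because it never needs Lemma \ref{lem_trace_inequa} here. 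Your opening reduction (the stabilization term is dominated by the boundary-gradient term since the tangential gradient is pointwise bounded by the full gradient) is correct and is implicit in the paper.

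There is, however, one concrete gap: the identification ``$u=u^{\pm}_E$ on $K^{\pm}_h$'' is false on the mismatched region $\widetilde{\Omega}_h=(\Omega^+\cap\Omega^-_h)\cup(\Omega^-\cap\Omega^+_h)$, because $K^{\pm}_h$ is carved out by $\Gamma_h$ while $u^{\pm}$ is determined by $\Gamma$. Hence each of your triangle-inequality splittings silently drops a mismatch contribution $\nabla(u^+_E-u^-_E)$, supported on $K\cap\widetilde{\Omega}_h$ in the volume and on $\widetilde{F}=F\cap\widetilde{\Omega}_h$ on the boundary. In the volume this is harmless: the dropped term is bounded by $\|u\|_{E,1,\omega_K\cap S_{\delta}}$, which is exactly the strip term already appearing in Lemma \ref{lem_beta_proj} and absorbed by Lemma \ref{lem_delta}. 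On the boundary it is not harmless: the face integral $h_K^{1/2}\|\nabla(u^+_E-u^-_E)\|_{0,\widetilde{F}}$ cannot be absorbed by the volume $\delta$-strip lemma directly, and a trace inequality on all of $K$ loses the smallness of the strip (summation over elements then only gives $O(1)$, not $O(h)$). This is precisely what the pyramid half of Assumption \hyperref[asp:A5]{(A5)} is for: one traces $\nabla u^{\pm}_E$ onto $\widetilde{F}$ from the pyramid $P_F\subseteq S_{\delta}\cap\omega_K$ of height $\mathcal{O}(h_K)$, exactly as in \eqref{lem_h6_verify_eq1}, obtaining $|u|_{E,1,S_{\delta}\cap\omega_K}+h_K|u|_{E,2,S_{\delta}\cap\omega_K}$, and only then applies Lemma \ref{lem_delta} and \eqref{sobolev_ext}. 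Your proposal uses \hyperref[asp:A5]{(A5)} only for shape regularity and $\delta\lesssim h^2$, so this term --- the second term in the paper's own face decomposition --- is missing; the fix is standard but must be added to both your first boundary piece and, for completeness, your volume splitting.
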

\begin{proof}
\eqref{Pi_approx_eq1} immediately follows from Lemma \ref{lem_beta_proj} and Lemma \ref{lem_delta}. 
As for \eqref{Pi_approx_eq2}, by the triangular inequality, given each face $F\in\mathcal{F}_K$, we have
\begin{equation*}
\begin{split}
\label{lem_boundflux_err_eq1}
\|  \nabla (u-\Pi_K u) \|_{0,F}  \le  \sum_{s=\pm}  \|  \nabla (u^{s}_E-\Pi^{s}_K u) \|_{0,F}   +  \|   \nabla (  u^+_E -  u^-_E )\cdot \mathbf{ n}\|_{0,\widetilde{F}}.
\end{split}
\end{equation*}
The estimate of the first term follows from Lemma \ref{lem_beta_proj} with the classical trace inequality applied on the entire element, 
while the estimate of the second term is similar to \eqref{lem_h6_verify_eq1}. 
Summing the estimates over all the interface elements and using Lemma \ref{lem_delta} and \eqref{sobolev_ext} finishes the proof.
\end{proof}

 \begin{lemma}
\label{lem_boundinterp}
Let $u\in H^2_0(\beta;\Omega)$. Under Assumptions \hyperref[asp:A1]{(A1)}, \hyperref[asp:A2]{(A2)} and \hyperref[asp:A5]{(A5)}, Hypothesis \hyperref[asp:H5]{(H5)} holds. 
\end{lemma}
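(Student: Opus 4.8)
The plan is to verify the two halves of \hyperref[asp:H5]{(H5)}, namely $\sum_K\|u-\Pi_Ku\|_{0,K}^2$ and $\sum_K h_K\|u-u_I\|_{0,\partial K}^2$, by transplanting the argument of Lemma~\ref{lem_assump_verify3} to the interface setting: the polynomial projection $\mathrm{P}^1_{\conv(K)}$ is replaced by the quasi-interpolation $J_Ku$ of \eqref{quaInterp_1}, the estimate of Lemma~\ref{lem_proj} by Theorem~\ref{thm_interp} and Lemma~\ref{lem_beta_proj}, and the shape-regularity of the background tetrahedron $K$ lets me keep the classical trace inequality together with its IFE counterpart, Lemma~\ref{lem_trace_inequa}. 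On non-interface elements $\beta_h$ is a single constant and the bound is the standard one, so throughout I restrict to $K\in\mathcal{T}^i_h$ and perform the summation, including the absorption of every mismatched-region contribution, only at the very end through the $\delta$-strip Lemma~\ref{lem_delta} and the extension bound \eqref{sobolev_ext}.

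For the volume term, since $J_Ku\in\mathcal{W}_h(K)$ is fixed by the projection, setting $v=J_Ku-u$ gives $J_Ku-\Pi_Ku=\Pi_Kv$ and hence
\[
\|u-\Pi_Ku\|_{0,K}\le \|u-J_Ku\|_{0,K}+\|\Pi_Kv\|_{0,K}.
\]
Mirroring Lemma~\ref{lem_assump_verify3}, I would split $\Pi_Kv=\bfp_h\cdot(\bfx-\bfx_0)+\mathrm{P}^0_{\partial K}v$ with $\bfp_h=\nabla\Pi_Kv\in\bfP^{\beta}_h(K)$ and $\bfx_0$ chosen so that the linear part has zero boundary mean. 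The term $\|\bfp_h\cdot(\bfx-\bfx_0)\|_{0,K}\lesssim h_K\|\nabla\Pi_Kv\|_{0,K}\lesssim h_K|u-J_Ku|_{1,K}$ is controlled by the $j=1$ case of Theorem~\ref{thm_interp} (plus a strip remainder), while $\|\mathrm{P}^0_{\partial K}v\|_{0,K}$ is handled by the isoperimetric and trace inequalities exactly as in \eqref{lem_assump_verify3_2}, reduced to $\|u-J_Ku\|_{0,\partial K}$; the leading term $\|u-J_Ku\|_{0,K}$ itself comes from the $j=0$ case of Theorem~\ref{thm_interp}. In each of these the matched part of $K$ yields $h_K^2\|u\|_{E,2,\omega_K}$, and the jump $J^+_Ku-J^-_Ku$ on $K\cap\widetilde\Omega_h$, being $\mathcal{O}(h_K)$ times a constant gradient, contributes an $S_\delta$-supported remainder. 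Squaring and summing, the matched parts give $h^4\|u\|^2_{2,\Omega^-\cup\Omega^+}$, while each remainder is bounded by Lemma~\ref{lem_delta} applied to $\nabla u$ over a width-$\mathcal{O}(h)$ neighborhood of $\Gamma$, which supplies an extra factor $h$ and upgrades the mismatch to $h^4\|u\|^2_{2,\Omega^-\cup\Omega^+}$ as well.

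For the boundary term I would triangulate each $\partial K$ compatibly with $\Gamma^K_h$ so that every surface triangle lies in $\Omega^+_h$ or in $\Omega^-_h$. On a triangle contained in $\Omega^s_h$ the trace of $u$ agrees with $u^s_E$ outside the strip, so the face estimate \eqref{lem_uI_face_eq01} applied to the extension $u^s_E$ gives $\|u-u_I\|_{0,T}\lesssim h_K^{3/2}\|u\|_{E,2,\omega_K}$; the part of the triangle inside $\widetilde F=F\cap\widetilde\Omega_h$ is treated exactly as in \eqref{lem_h6_verify_eq1}, using the pyramid $P_F\subseteq S_\delta\cap\omega_K$ furnished by \hyperref[asp:A5]{(A5)} and the trace inequality to produce an $|u|_{E,1,S_\delta\cap\omega_K}$-type term. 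Multiplying by $h_K$, squaring, and summing, the regular part scales like $h_K\cdot h_K^{3}$ and the strip part is again absorbed by Lemma~\ref{lem_delta} and \eqref{sobolev_ext}, giving the desired $h^4\|u\|^2_{2,\Omega^-\cup\Omega^+}$.

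I expect the principal obstacle to be the $L^2$ control of the IFE difference $\Pi_Kv$: unlike the polynomial situation of Lemma~\ref{lem_assump_verify3}, $\Pi_Kv$ is only piecewise linear with a gradient jump across $\Gamma^K_h$, so a polynomial Poincar\'e/inverse bound is unavailable and the constant piece must be pinned by the boundary-average normalisation $\int_{\partial K}\Pi_Kv_h\dd s=\int_{\partial K}v_h\dd s$ rather than by a nodal value; the explicit structure \eqref{lem_ife_space_eq0} and the IFE trace inequality Lemma~\ref{lem_trace_inequa} are what make this tractable. The accompanying difficulty is purely one of bookkeeping: each appearance of the mismatched region must be shown to carry an extra power $h^2$, so that the square of an $h$-sized quantity becomes $h^4$, which is exactly why the optimal accuracy $\delta\lesssim h^2$ in \hyperref[asp:A5]{(A5)} and the $\sqrt\delta$ gain of Lemma~\ref{lem_delta}, applied both to $u$ and to $\nabla u$, are indispensable.
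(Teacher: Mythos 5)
Your treatment of the boundary term $h_K\|u-u_I\|^2_{0,\partial K}$ matches the paper's: both reduce to the face interpolation estimate of Lemma~\ref{lem_uI_face} applied to the extensions $u^{\pm}_E$, with the mismatched-region contributions absorbed by Lemma~\ref{lem_delta} and \eqref{sobolev_ext}. The volume term is where your proposal diverges, and where it has a genuine gap. The decomposition $\Pi_K v = \bfp_h\cdot(\bfx-\bfx_0)+\mathrm{P}^0_{\partial K}v$ that you want to import from Lemma~\ref{lem_assump_verify3} is not available for IFE functions: by \eqref{lem_ife_space_eq0} an element of $\mathcal{W}_h(K)$ has the form $\bfp_h\cdot(\bfx-\bfx_K)+c$ with $\bfx_K\in\Gamma^K_h$ and with $\bfp_h$ jumping across $\Gamma^K_h$ in the direction $\bar{\bfn}_K$; replacing $\bfx_K$ by another point $\bfx_0$ produces a remainder $\bfp_h\cdot(\bfx_K-\bfx_0)$ that is only \emph{piecewise} constant (hence discontinuous across $\Gamma^K_h$) unless $\bfx_0$ stays in the plane containing $\Gamma^K_h$, and within that plane you cannot in general also enforce the zero-boundary-mean normalization of the linear part. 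You flag exactly this as ``the principal obstacle'' but leave it unresolved, so as written the bound for $\|\Pi_K v\|_{0,K}$ does not go through.

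The missing idea --- which also renders your insertion of $J_Ku$ unnecessary --- is that the Poincar\'e inequality needed here is not a polynomial or discrete one at all. Under Assumption \hyperref[asp:A5]{(A5)} the element $K$ is a shape-regular tetrahedron, $u-\Pi_Ku$ belongs to $H^1(K)$, and it has zero average on $\partial K$ by the normalization $\int_{\partial K}\Pi_Kv\dd s=\int_{\partial K}v\dd s$ in \eqref{Pi_proj}; hence the standard Poincar\'e inequality for $H^1$ functions on shape-regular domains gives $\|u-\Pi_Ku\|_{0,K}\lesssim h_K\|\nabla(u-\Pi_Ku)\|_{0,K}$ directly. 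This is the paper's entire argument for the volume term: combine this with Lemma~\ref{lem_beta_proj}, sum over interface elements, and absorb the strip contribution $h_K\|u\|_{E,1,\omega_K\cap S_\delta}$ with Lemma~\ref{lem_delta} and \eqref{sobolev_ext}, the factor $\sqrt{\delta}\lesssim h$ supplying the missing power so that squaring yields $h^4$. The anisotropic machinery of Lemma~\ref{lem_assump_verify3} was needed in Sections~\ref{sec:fitted_1} and \ref{sec:fitted _2} precisely because the elements there are not shape regular; in Section~\ref{sec:IVEM} the irregularity sits in the coefficient, not in the geometry, so the classical tool applies. The same observation would also repair your route --- apply the standard Poincar\'e inequality to $\Pi_Kv-\mathrm{P}^0_{\partial K}v$, which lies in $H^1(K)$ and has zero boundary mean --- but once this is seen, the detour through $J_Ku$ and the explicit IFE structure is pointless.
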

\begin{proof}
As $K$ is shape regular and $u,\Pi_K u \in H^1(K)$, by the standard Poincar\'e inequality, there holds $\| u - \Pi_K u \|_{0,K}\lesssim h_K \| \nabla( u - \Pi_K u ) \|_{0,K}$ whose estimate then follows from Lemma \ref{lem_beta_proj}. The estimate of $\| u - u_I \|_{0,\partial K}$ follows from Lemma \ref{lem_uI_face}. Summing the estimates over all the interface elements and using Lemma \ref{lem_delta} and \eqref{sobolev_ext} finishes the proof.
\end{proof}

In summary, the numerical solutions admit optimal errors in terms of both the energy and $L^2$ norms by Theorems \ref{thm_energy_est} and \ref{thm_u}, and we refer readers to numerical examples in \cite{2022CaoChenGuo}.


\begin{appendices}

\section{Relation between different geometry assumptions}
\label{appen_lem_tet_maxangle}

\begin{lemma}
\label{lem_tet_maxangle}
Let a polyhedron $D$ be star convex with respect to a ball with the radius $\rho_D$, then the following results hold
\begin{itemize}
\item[(\textbf{G1})] for each $F\in \mathcal{F}_D$, there is a tetrahedron $T\subseteq D$ that has $F$ has one of its faces and the supporting height is greater than $\rho_D$.
\item[(\textbf{G2})] for each $e\in \mathcal{E}_D$, there is a trapezoid $T$ that has $e$ as one of its edges and has the largest inscribed ball of the radius larger than $\rho_D/2$. In addition there is a pyramid $T'$ that has $T$ as its base and the height is $\rho_D$
\item[(\textbf{G3})] for each $\bfx \in \mathcal{N}_h(\partial K)$, there is a shape regular tetrahedron $T\subseteq D$ with the size greater than $\rho_D$ that has $\bfx$ as one of its vertices.
\end{itemize}
\end{lemma}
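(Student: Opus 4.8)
The plan is to extract from star convexity the single structural fact that does all the work: writing $B=B(O,\rho_D)$ for the ball, for any $S\subseteq\overline{D}$ one has $\conv(S\cup B)\subseteq\overline{D}$, because every segment joining a point of $S$ to a point of $B$ lies in $\overline{D}$ and $\conv(S\cup B)$ is the union of such segments together with $\conv(S)$ and $B$. Alongside this I would record three elementary distance bounds that convert ``contained in $\overline{D}$'' into quantitative height/size statements: since $B\subseteq D$ and each face $F$ lies on a supporting plane with all of $D$ on one side, $\operatorname{dist}(O,\operatorname{aff}(F))\ge\rho_D$; and since an edge $e$ and a vertex $\bfx$ lie on $\partial D$ while $B$ is interior, $\operatorname{dist}(O,e)\ge\rho_D$ and $|\bfx-O|\ge\rho_D$.

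For (G1) I would take $T=\conv(F\cup\{O\})$, the pyramid with apex $O$ over the (triangular) face $F$. It lies in $\overline{D}$ by the hull property, has $F$ as one of its faces by construction, and its supporting height equals $\operatorname{dist}(O,\operatorname{aff}(F))\ge\rho_D$; for triangular $F$ this pyramid is the desired tetrahedron.

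For (G2) I would work in the plane $P$ spanned by $e$ and $O$. The set $B\cap P$ is a disk of radius $\rho_D$ centered at $O$; letting $s\subseteq B\cap P$ be the chord parallel to $e$ through $O$, I set $T=\conv(e\cup s)$, a trapezoid with $e$ as one edge, contained in $\conv(e\cup B)\subseteq\overline{D}$. The inscribed-circle radius is then controlled by combining $\operatorname{dist}(O,e)\ge\rho_D$ with the fact that $e$ and $s$ are separated by a height $\gtrsim\rho_D$; a short case distinction on whether the foot of the perpendicular from $O$ to the line through $e$ falls inside $e$ is needed, and it delivers the bound $>\rho_D/2$. For the pyramid I would pick the point $\bfy\in B$ at distance $\rho_D$ from $P$ along the normal to $P$ through $O$ (it exists since $O\in P$ and $B$ has radius $\rho_D$) and set $T'=\conv(T\cup\{\bfy\})\subseteq\conv(e\cup B)\subseteq\overline{D}$, which has $T$ as base and height $\rho_D$.

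For (G3) I would build a tetrahedron with apex $\bfx$ and base an equilateral triangle inscribed in the great disk of $B$ in the plane through $O$ perpendicular to $\bfx-O$, so that $T=\conv(\{\bfx\}\cup\{\text{the three base vertices}\})\subseteq\conv(\{\bfx\}\cup B)\subseteq\overline{D}$ has size at least $\rho_D$ and $\bfx$ as a vertex. Shape regularity rests on the comparability $|\bfx-O|\lesssim\rho_D$, which holds in every application because there $D=\conv(K)$ is sandwiched between two concentric balls of comparable radii, so $\operatorname{diam}(D)\lesssim\rho_D$; I would state this comparability explicitly and note that without it the tetrahedron would degenerate, since a point far from the ball sees $B$ only through a narrow cone. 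The main obstacle is exactly this last point together with the inscribed-circle estimate in (G2): the containments are immediate from the hull property, but pinning down the shape-regularity and inscribed-radius constants requires the careful elementary geometry of where $O$ projects relative to $e$ and $\bfx$, which is the only genuinely technical part.
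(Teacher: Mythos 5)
Your proposal is correct and follows essentially the same route as the paper's (much terser) proof: cone the face, edge, or vertex to the ball --- the pyramid over $F$ with apex $O$ for (G1), the trapezoid spanned by $e$ and the parallel chord of $B$ through $O$ together with a point of $B$ on the normal line through $O$ for (G2), and a cone from $\bfx$ over a cross-section of $B$ for (G3). One caution: your opening ``structural fact'' is overstated --- for a merely star-convex $D$ and a general $S\subseteq\overline{D}$ one need not have $\conv(S)\subseteq\overline{D}$ (take $S$ to be the tips of two arms of a plus-shaped star domain), and $\conv(S\cup B)$ is not the union of segments from $S$ to $B$ together with $\conv(S)$ and $B$; both the identity and the containment do hold when $S$ is convex, which covers every instance you actually use (a triangular face, an edge, a single point, the trapezoid $T$), so the fact should be stated for convex $S$ only. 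Your further observation that the shape regularity in (G3) needs $\operatorname{dist}(\bfx,O)\lesssim\rho_D$ is a genuine point the paper leaves implicit; it holds in all of the paper's applications because there $D=\conv(K)$ is contained in a ball of radius comparable to that of $B_K$ by Assumption \hyperref[asp:A3]{(A3)}.
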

\begin{proof}
Let $O$ be the center of the ball denoted by $B_D$. (\textbf{G1}) can be simply verified by forming a pyramid that has the base $F$ and the apex $O$ as the distance from $O$ to the plane of F is certainly larger than $\rho_D$. For (\textbf{G2}), $T$ can be chosen as the trapezoid formed by $e$ and the segment passing through $O$ parallel to $e$. Consider another point $P$ on $B_D$ such that $PO$ is penperdicular to $T$, then the tetrahedron formed by $T$ and $P$ fulfills the requirement. (\textbf{G3}) follows from a similar argument.
\end{proof}

\begin{figure}[h]
  \centering
  \begin{minipage}{.47\textwidth}
  \centering
  \includegraphics[width=2.3in]{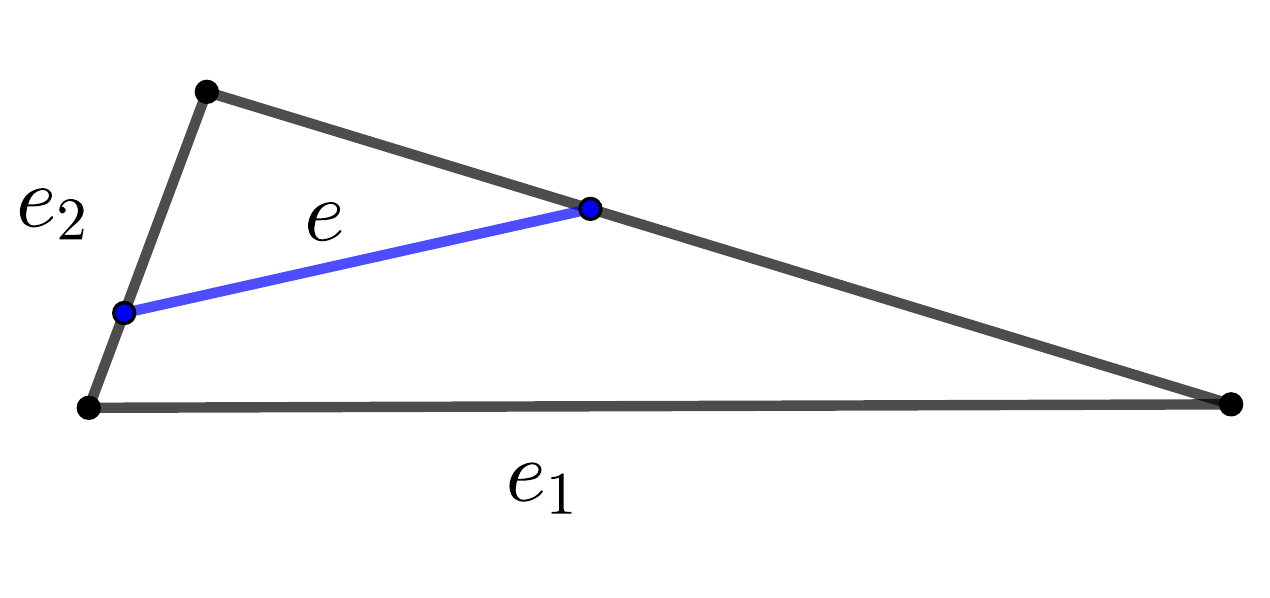}
  \caption{Illustration of the proof of Lemma \ref{lem_edge_max}.}
  \label{fig:ebound}
  \end{minipage}
  ~~~~
    \begin{minipage}{0.47\textwidth}
  \centering
   \includegraphics[width=2.3in]{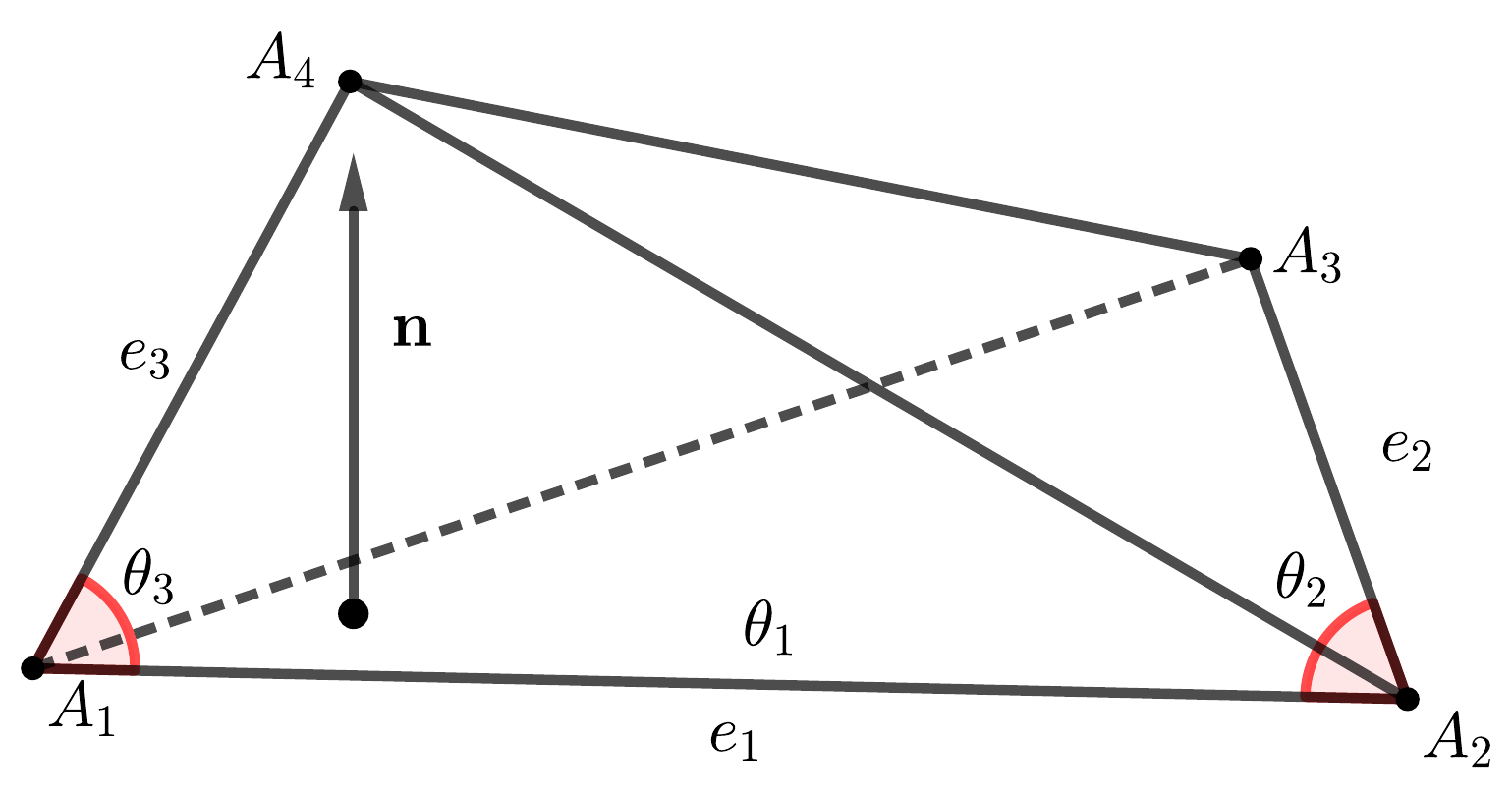}
  \caption{Illustration of the proof of Lemma \ref{lem_regular_edges}.}
  \label{fig:tetra_maxangle}
  \end{minipage}
\end{figure}

\section{Proof of Lemma \ref{lem_A2plus}}
\label{append:lem_A2plus}

We show a stronger version of  Assumption \hyperref[asp:A2]{(A2)} that any two vertices connected by an edge must admit a path satisfying the property in Assumption \hyperref[asp:A2]{(A2)}. 
Call an element \textit{isotropic} if it has the minimum angle $\theta_m$. Consider two vertices $A_1$ and $A_3$ of a triangle $T_1$, as shown in Figure \ref{fig:anisotrop_triangles}.
If $T_1$ is isotropic, then we just choose the path as $A_1$-$A_3$ with $\epsilon = \theta_M/\theta_m$. We focus on $T_1$ being anisotropic.

Case 1. Suppose $\angle A_1A_3A_2$ is the minimum angle of $T_1$, shown by the left plot in Figure \ref{fig:anisotrop_triangles}. 
If $\angle A_1A_3A_2 \rightarrow 0$, by the assumption, one of the two triangles $T_2$ and $T_3$ must be isotropic. Then, one of the paths $A_1$-$A_3$ and $A_1-A_2-A_3$ must have the desired property.  If $\angle A_1A_3A_2$ is also bounded below, and neither of $T_2$ and $T_3$ has the minimum angle $\theta_m$, we can estimate $\angle A_1A_3A_2$ by considering $T_4$. As $T_4$ has the minimum angle $\theta_m$ and has the size greater than $\rho h_{T_1}$, by sine law we know its edges have the minimum length $\sin(\theta_m) \rho h_{T_1} $. So $A_1A_2$ is bounded below by this quantity. As $\angle A_1A_3A_2$ is the minimum angle, $A_1A_2$ is also the edge with the minimum length. Therefore, using the sine law again, we have $\sin(\angle A_1A_3A_2) \ge \rho \sin(\theta_m) \sin(\theta_M)$, which implies that $T_1$ is isotropic with the minimum angle $ \arcsin (\rho \sin(\theta_m) \sin(\theta_M))$. So the path $A_1$-$A_3$ has the desired property.

Case 2. If $\angle A_1A_3A_2$ is not the minimum angle of $T_1$, without loss of generality, we suppose $\angle A_3A_1A_2$ is the minimum angle, shown by the right plot in Figure \ref{fig:anisotrop_triangles}. By the assumption, one of $T_2$ and $T_3$ must isotropic. Similarly, one of the paths $A_1-A_3$ and $A_1-A_2-A_3$ has the desired property.


\section{Estimates regarding maximum angle conditions}

\begin{lemma}
\label{lem_grad}
Given a triangle $T$ with the maximum angle $\theta_M$, then there holds
\begin{equation}
\label{lem_grad_eq0}
\| \nabla v_h \|_{0, T} \le  h^{1/2}_T/\sqrt{2\sin(\theta_M)} \sum_{i=1,2,3} \| \nabla v_h \cdot \bft_i\|_{L^2(e_i)}, ~~~~ \forall v_h\in \mathcal{P}_1(T).
\end{equation}
\end{lemma}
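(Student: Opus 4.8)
The plan is to exploit that $\nabla v_h$ is a \emph{constant} vector on $T$, which collapses every quantity in the inequality into an elementary algebraic expression. Set $\bfg := \nabla v_h$, constant since $v_h\in\mathcal{P}_1(T)$. Then $\|\nabla v_h\|_{0,T} = |\bfg|\,|T|^{1/2}$, and for each edge $e_i$ the tangential derivative $\bfg\cdot\bft_i$ is a constant scalar $c_i$, so $\|\nabla v_h\cdot\bft_i\|_{L^2(e_i)} = |c_i|\,|e_i|^{1/2}$. Thus the claim reduces to the scalar inequality $|\bfg|\,|T|^{1/2}\le (h_T/(2\sin\theta_M))^{1/2}\sum_i |c_i|\,|e_i|^{1/2}$, and it suffices to prove it with the sum restricted to two well-chosen edges, since all summands are nonnegative.

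The decisive step is to reconstruct $\bfg$ from only the two edges $e_1,e_2$ adjacent to the maximum angle, so that the angle between $\bft_1$ and $\bft_2$ is exactly $\theta_M$. Because $\theta_M\in(0,\pi)$, the tangents $\bft_1,\bft_2$ are linearly independent; solving the two relations $c_i=\bfg\cdot\bft_i$ through the Gram matrix $\begin{pmatrix}1&\cos\theta_M\\\cos\theta_M&1\end{pmatrix}$ yields the closed form $|\bfg|^2 = (c_1^2 + c_2^2 - 2c_1c_2\cos\theta_M)/\sin^2\theta_M$. I would note that this expression is invariant under reversing the orientation of either $\bft_i$ (reversing $\bft_1$ sends $c_1\mapsto -c_1$ and $\cos\theta_M\mapsto-\cos\theta_M$, leaving the cross term and $\sin^2\theta_M$ unchanged), so no sign convention for the tangents needs to be tracked.

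Next I would insert the area identity $|T| = \tfrac12|e_1||e_2|\sin\theta_M$, valid precisely because $e_1,e_2$ subtend the angle $\theta_M$; it cancels one factor of $\sin\theta_M$ and gives $\|\nabla v_h\|_{0,T}^2 = \tfrac{|e_1||e_2|}{2\sin\theta_M}(c_1^2+c_2^2-2c_1c_2\cos\theta_M)$. The constant $1/(2\sin\theta_M)$ now matches the target exactly, and it only remains to absorb the edge lengths. Bounding the cross term by $-2c_1c_2\cos\theta_M\le 2|c_1||c_2|$ (using $|\cos\theta_M|\le1$) and comparing against $h_T\big(c_1^2|e_1| + c_2^2|e_2| + 2|c_1||c_2||e_1|^{1/2}|e_2|^{1/2}\big) = h_T\big(|c_1||e_1|^{1/2}+|c_2||e_2|^{1/2}\big)^2$, each of the three term-by-term comparisons reduces to $|e_i|\le h_T$ and $|e_1|^{1/2}|e_2|^{1/2}\le h_T$, which hold since $h_T$ is the longest edge. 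Taking square roots and enlarging the two-edge sum to the full three-edge sum finishes the proof.

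The one genuine subtlety — the \emph{main obstacle} — is recognizing that one must use the pair of edges subtending $\theta_M$ and no other pair: this is exactly what makes the $\sin\theta_M$ arising from the Gram determinant and the $\sin\theta_M$ arising from the area formula the \emph{same} quantity, so that one cancels and the other survives with the sharp constant $1/(2\sin\theta_M)$. A generic pair of edges would introduce the angle at a different vertex and destroy this cancellation. Everything past this choice is elementary, and notably no lower bound on the minimum angle of $T$ is ever invoked, consistent with the maximum-angle philosophy of the paper.
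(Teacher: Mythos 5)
Your proof is correct, and it takes a genuinely different route from the paper's. The paper's own argument is a two-line appeal to the cotangent formula of \cite{2007Chen}, which gives the exact identity $\| \nabla v_h \|_{0,T}^2 = R_T \sum_{i=1}^3 \cos(\theta_i)\, \| \nabla v_h \cdot \bft_i\|_{L^2(e_i)}^2$ with $R_T$ the circumradius and $\theta_i$ the angle opposite $e_i$; the lemma then follows from $\cos(\theta_i)\le 1$ and the law-of-sines bound $R_T \le h_T/(2\sin\theta_M)$. You instead reconstruct the constant vector $\nabla v_h$ from its components along the two edges meeting at the maximum-angle vertex via the Gram matrix, cancel the Gram determinant $\sin^2\theta_M$ against the area factor $|T|=\tfrac12|e_1||e_2|\sin\theta_M$, and finish with term-by-term comparisons using only $|e_i|\le h_T$; all steps check out, including the orientation-invariance remark and the bound $-2c_1c_2\cos\theta_M\le 2|c_1||c_2|$, which is valid whether $\theta_M$ is acute or obtuse. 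Your route buys two things: it is self-contained (no citation of an energy identity), and it proves a strictly stronger statement, namely that the two edges adjacent to the maximum angle alone control $\|\nabla v_h\|_{0,T}$ with the same constant. That refinement is not immediate from the paper's identity---for an acute triangle all three coefficients $\cos\theta_i$ are positive, so no term can be dropped---yet it is precisely the two-edge form that the paper later invokes in the proof of Lemma \ref{lem_assump_verify4}, in \eqref{lem_assump_verify4_eq1}. What the paper's approach buys in exchange is brevity and an exact equality (not just an upper bound), which pinpoints where the estimate loses sharpness.
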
 
\begin{proof}
Let $R_T$ be the circumradius of $T$ and $\bft_i$ is a unit tangential vector of the edge $e_i$, $i=1,2,3$.
The cotangent formula \cite{2007Chen} and the law of sines gives
\begin{equation}
\| \nabla v_h \|_{0, T}^2 = R_T \sum_{i=1}^3  \cos(\theta_i) \| \nabla v_h \cdot \bft_i\|_{L^2(e_i)}^2, 
\end{equation}
which leads to the desired result by $R_T  \le h_T/(2\sin(\theta_M))$.
\end{proof}

\begin{lemma}
\label{lem_regular_edges}
Assume a tetrahedron $T$ has the maximum angle condition $\theta_M$. Then, $T$ has three edges (may not share one vertex) such that 
\begin{equation}
\label{lem_regular_edges_eq01}
|\emph{det}(M)|\ge c_m:=\min\{\sqrt{3}/2,\sin(\theta_M) \}\min\{ \cos(\theta_M/2),\sin(\theta_M) \}^2, 
\end{equation}
where $M$ is the matrix formed by the unit direct vectors of these edges. In addition, there holds 
\begin{equation}
\label{lem_regular_edges_eq02}
\frac{c_m}{6}|e_1||e_2||e_3| \le |T| \le \frac{1}{6}|e_1||e_2||e_3|.
\end{equation}
\end{lemma}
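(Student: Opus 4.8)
The plan is to exhibit the three required edges as two edges of one face of $T$ together with one edge joining the opposite vertex, and to factor $|\det(M)|$ into a \emph{base} contribution governed by the $2$D maximum angle condition and an \emph{elevation} contribution governed by the dihedral part of the $3$D condition. Write $T=\operatorname{conv}\{A_1,A_2,A_3,A_4\}$ and take $A_1A_2A_3$ as a base face. Its three interior angles sum to $\pi$ and are each $\le\theta_M$, so the largest is $\ge\pi/3$ and hence the second largest angle $\alpha$ satisfies $\alpha\in[(\pi-\theta_M)/2,\theta_M]$. Since $\sin$ is concave on $[0,\pi]$, its minimum over this interval is attained at an endpoint, giving $\sin\alpha\ge\min\{\cos(\theta_M/2),\sin\theta_M\}=:c_1$. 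Let $e_1,e_2$ be the two edges of the face meeting at the vertex carrying $\alpha$, with unit directions $\mathbf t_1,\mathbf t_2$; then $\mathbf t_1\times\mathbf t_2=\sin\alpha\,\hat{\mathbf n}$ with $\hat{\mathbf n}$ a unit normal to the base plane and $|\mathbf t_1\times\mathbf t_2|\ge c_1$.

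Next I would handle the elevation factor. Let $h$ be the distance from the apex $D:=A_4$ to the base plane and $p$ its orthogonal projection. For each base vertex $V$ the edge $DV$ obeys $\hat{\mathbf n}\cdot(\vec{DV}/|DV|)=h/|DV|=\sin\psi_V$, where $\psi_V$ is the angle between $DV$ and the base plane; crucially this $\hat{\mathbf n}$-component is the \emph{same} for all three choices of $V$, so I may pick $e_3=DV^{\ast}$ with $V^{\ast}$ the base vertex nearest $p$, whence $\sin\psi_{V^{\ast}}=h/\sqrt{h^2+\min_V|pV|^2}$. Setting $M=[\mathbf t_1,\mathbf t_2,\mathbf t_{DV^{\ast}}]$ and expanding the triple product gives $|\det(M)|=|(\mathbf t_1\times\mathbf t_2)\cdot\mathbf t_{DV^{\ast}}|=\sin\alpha\,\sin\psi_{V^{\ast}}$, so \eqref{lem_regular_edges_eq01} reduces to the single inequality $\sin\psi_{V^{\ast}}\ge c_0c_1$ with $c_0=\min\{\sqrt3/2,\sin\theta_M\}$. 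Since $V^{\ast}$ need not be the vertex carrying $e_1,e_2$, the three edges may fail to share a vertex, which is exactly the clause in the statement.

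The hard part will be proving $\sin\psi_{V^{\ast}}\ge c_0c_1$, i.e. $\min_V|pV|\lesssim h$. The maximum angle hypothesis must be used here, and it is precisely the bound on the \emph{dihedral} angles (notably along the lateral edges) rather than the face angles that does the work: a flat tetrahedron whose apex projects into the interior of the base keeps every face angle away from $\pi$ yet drives the dihedral angle along a lateral edge to $\pi$, so only the dihedral condition excludes it. I would reduce the claim to a planar statement by slicing $T$ with the plane through $D$ orthogonal to the lateral edge $DV^{\ast}$ (or to the base edge nearest $p$), which converts the dihedral bound $\delta\le\theta_M$ into a bound on a planar angle and, through a computation in the spirit of Lemma \ref{lem_oppo_angle}, into the asserted proportionality $\min_V|pV|\lesssim h$. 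Tracking the extremal configuration then yields the constant $c_0$, with the value $\sqrt3/2=\sin(\pi/3)$ reflecting the worst (equidistant, circumcenter-type) placement of $p$ relative to the base vertices.

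Finally I would record \eqref{lem_regular_edges_eq02}. Because $e_1,e_2$ lie in the base face and $e_3=DV^{\ast}$ reaches the apex, the elementary identity $|T|=\tfrac13\,\operatorname{Area}(A_1A_2A_3)\,h$ together with $\operatorname{Area}(A_1A_2A_3)=\tfrac12|e_1||e_2|\sin\alpha$ and $h=|e_3|\sin\psi_{V^{\ast}}$ gives $|T|=\tfrac16|e_1||e_2||e_3|\sin\alpha\,\sin\psi_{V^{\ast}}=\tfrac16|e_1||e_2||e_3|\,|\det(M)|$, valid even though the three edges do not share a vertex. The upper bound in \eqref{lem_regular_edges_eq02} is then Hadamard's inequality $|\det(M)|\le1$ for a matrix with unit-norm columns, while the lower bound is exactly \eqref{lem_regular_edges_eq01}.
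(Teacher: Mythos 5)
Your skeleton is sound as far as it goes: the factorization $|\det(M)|=\sin\alpha\,\sin\psi_{V^{\ast}}$, the identity $|T|=\tfrac16|e_1||e_2||e_3|\,|\det(M)|$ for your (non-concurrent) triple of edges, the Hadamard upper bound in \eqref{lem_regular_edges_eq02}, and the estimate $\sin\alpha\ge\min\{\cos(\theta_M/2),\sin\theta_M\}$ for the second largest base angle are all correct. The genuine gap is that the entire content of the lemma has been compressed into the single unproven inequality $\sin\psi_{V^{\ast}}\ge\min\{\sqrt3/2,\sin\theta_M\}\cdot\min\{\cos(\theta_M/2),\sin\theta_M\}$, equivalently $\min_V|pV|\le C\,h$ with an explicit constant, and what you offer for it is a plan, not a proof: no computation is given that converts the dihedral bound along a lateral or base edge into this distance bound, "tracking the extremal configuration" is not carried out, and nothing verifies that the extremal value is exactly $\sqrt3/2$. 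The appeal to Lemma \ref{lem_oppo_angle} is also misplaced, since that lemma bounds point values of linear polynomials on a planar triangle and has no bearing on this 3D distance estimate. Note moreover that your version is \emph{stronger} than the lemma: you fixed the base face arbitrarily, so you are implicitly claiming the bound for every choice of base, whereas the lemma only asserts the existence of one good triple of edges; if the constant fails for a badly chosen base, your argument has no mechanism for switching to a better one.

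The paper's proof never meets your "hard part" because it selects the three edges adapted to the dihedral structure from the start: $e_1$ is an edge realizing the largest dihedral angle $\theta_1$, which by Lemma 6 of \cite{1992Michal} lies in $[\pi/3,\theta_M]$, and $e_2$, $e_3$ are the edges of the two faces adjacent to $e_1$ making the largest planar angles $\theta_2,\theta_3\in[(\pi-\theta_M)/2,\theta_M]$ with $e_1$. This choice yields the three-factor identity $|\det(M)|=\sin\theta_1\sin\theta_2\sin\theta_3$, in which the dihedral angle enters as its own factor rather than being entangled with the position of the apex projection; each sine is then bounded below by an elementary endpoint argument on its interval, which is exactly where $\sqrt3/2=\sin(\pi/3)$ and the square on the second factor of $c_m$ come from, and the same product formula gives the volume bound \eqref{lem_regular_edges_eq02}. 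To complete your route you would have to prove the distance bound $\min_V|pV|\lesssim h$ with explicit constants (possibly after optimizing the choice of base face); the cleaner repair is to adopt the paper's edge selection, which reduces everything to interval bounds on three angles.
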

\begin{proof}
In $T=A_1A_2A_3A_4$, we first choose the edge $e_1$ such that the associated dihedral angle is the largest one, and without loss of generality, we assume $e_1=A_1A_2$, as shown in Figure \ref{fig:tetra_maxangle}. Let this dihedral angle be $\theta_1$, and let the directional vector of $e_1$ be $\bft_1$. By Lemma 6 in \cite{1992Michal}, we have $\theta_1\in[\pi/3,\theta_M]$. This edge has two neighbor elements $\triangle A_1A_2A_3$ and $\triangle A_1A_2A_4$. Then, we pick the edges $e_2$ and $e_3$ from these two faces such that they have the largest angle from $e_1$ in their faces, denoted by $\theta_2$ and $\theta_3$ respectively. Clearly, we have $\theta_2, \theta_3\in[(\pi-\theta_M)/2,\theta_M]$. Note that $e_2$ and $e_3$ may or may not share the same vertex, but the argument for both the two cases are the same. See Figure \ref{fig:tetra_maxangle} for illustration that they do not share a vertex. Let $\bft_2$ and $\bft_3$, respectively, be the directional vectors of $e_2$ and $e_3$. Let $M$ be the matrix formed by these three vectors. Let $\bfn$ be the norm vector to $\bft_1$ and $\bft_2$. 
Then, the direct calculation shows $\abs{ \bft_3\cdot\bfn} = \cos(\theta_1-\pi/2)\sin(\theta_3)$, and thus
\begin{equation}
\label{lem_regular_edges_eq1}
\abs{\text{det}(M)} = \abs{ (\bft_1 \times \bft_2)\cdot \bft_3 } = \sin(\theta_2) \abs{\bfn \cdot \bft_3} = \sin(\theta_1)\sin(\theta_2)\sin(\theta_3).
\end{equation}
Therefore, we obtain the estimates of $\abs{\text{det}(M)}$ by the upper and lower bounds of $\theta_i$, $i=1,2,3$. 

As for \eqref{lem_regular_edges_eq02}, without loss of generality, we consider the tetrahedron shown in Figure \ref{fig:tetra_maxangle}. Let $l$ be the distance from $A_4$ to the plane $\triangle A_1A_2A_3$. It is easy to see $l =|e_3|\sin(\theta_3)\sin(\theta_1)$. Then, we have
\begin{equation}
\label{lem_regular_edges_eq2}
|T| = |e_1||e_2|\sin(\theta_2)l/6 = |e_1||e_2||e_3| \sin(\theta_1)\sin(\theta_2)\sin(\theta_3)/6
\end{equation}
which yields \eqref{lem_regular_edges_eq02}.
\end{proof}

\begin{lemma}
\label{lem_edge_max}
Given a triangle $T$ with maximum angle $\theta_M$, let $e_1$ and $e_2$ be the two edges of $T$ adjacent to the maximum angle, then each for each segment $e \subseteq T$, there holds
\begin{equation}
\label{lem_edge_max_eq0}
\| \bfv_h\cdot\bft_e \|_{0,e} \lesssim  (\sin(\theta))^{-1/2}  \sum_{i=1,2} \| \bfv_h\cdot\bft_{e_i} \|_{0,e_i}, ~~~ \forall \bfv_h \in \left[ \mathcal{P}_0(T) \right]^2.
\end{equation}
\end{lemma}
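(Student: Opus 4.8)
The plan is to exploit that $\bfv_h$ is a \emph{constant} vector, so each $L^2$ norm over a segment collapses to a length times a squared scalar: $\|\bfv_h\cdot\bft_e\|_{0,e}^2=|e|\,(\bfv_h\cdot\bft_e)^2$, and likewise on $e_1,e_2$. Thus, writing $\theta_M$ for the angle between $e_1$ and $e_2$ (this is exactly the maximum angle, since $e_1,e_2$ are its two adjacent edges), the claim reduces to the scalar estimate
\[
|e|^{1/2}\,|\bfv_h\cdot\bft_e|\ \le\ (\sin\theta_M)^{-1/2}\Big(|e_1|^{1/2}|\bfv_h\cdot\bft_{e_1}|+|e_2|^{1/2}|\bfv_h\cdot\bft_{e_2}|\Big),
\]
which I will prove for an arbitrary unit direction $\bft_e$ of the chord $e$.

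First I would place the maximum-angle vertex at the origin and choose coordinates so that $\bft_{e_1}=(1,0)$ and $\bft_{e_2}=(\cos\theta_M,\sin\theta_M)$. Writing $\bft_e=(\cos\phi,\sin\phi)$, a direct computation gives the exact sine decomposition $\bft_e=\tfrac{\sin(\theta_M-\phi)}{\sin\theta_M}\,\bft_{e_1}+\tfrac{\sin\phi}{\sin\theta_M}\,\bft_{e_2}$. Dotting with $\bfv_h$ yields
\[
\bfv_h\cdot\bft_e=\tfrac{\sin(\theta_M-\phi)}{\sin\theta_M}(\bfv_h\cdot\bft_{e_1})+\tfrac{\sin\phi}{\sin\theta_M}(\bfv_h\cdot\bft_{e_2}),
\]
so by the triangle inequality the two components of $\bfv_h$ along $\bft_{e_1}$ and $\bft_{e_2}$ already control $\bfv_h\cdot\bft_e$, with coefficients $|\sin(\theta_M-\phi)|/\sin\theta_M$ and $|\sin\phi|/\sin\theta_M$.

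The crux is then absorbing the factor $|e|^{1/2}$. The key geometric inequalities are the two \emph{width} bounds $|e|\,|\sin\phi|\le|e_2|\sin\theta_M$ and $|e|\,|\sin(\theta_M-\phi)|\le|e_1|\sin\theta_M$. These follow by projecting the whole triangle onto the direction perpendicular to $\bft_{e_1}$ (respectively $\bft_{e_2}$): the triangle fits inside a strip whose width is the height from the opposite vertex, namely $|e_2|\sin\theta_M$ (resp. $|e_1|\sin\theta_M$), while the chord $e$ has cross-strip extent $|e|\,|\sin\phi|$ (resp. $|e|\,|\sin(\theta_M-\phi)|$). Feeding these into the decomposition — multiplying by $|e|^{1/2}$ and using $\tfrac{\sin^2\phi}{\sin\theta_M}|e|=\tfrac{|\sin\phi|}{\sin\theta_M}\cdot|e|\,|\sin\phi|\le|\sin\phi|\,|e_2|\le|e_2|$, and symmetrically for the other term — converts each $|e|$ into the matching $|e_i|$ and leaves precisely the factor $(\sin\theta_M)^{-1/2}$.

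I expect the main obstacle to be establishing the two width inequalities cleanly, that is, recognizing that the relevant height of the triangle perpendicular to $e_i$ equals $|e_{3-i}|\sin\theta_M$ and that an interior chord's extent across that strip is $|e|\,|\sin(\cdot)|$. This is exactly where the maximum-angle geometry enters: $\sin\theta_M$ is the nondegenerate factor that ties the possibly short edges $e_1,e_2$ to an arbitrary interior segment $e$. The scalar decomposition and the reduction to lengths are routine once these bounds are in hand; in fact the argument delivers constant $1$ in front of $(\sin\theta_M)^{-1/2}$, so no hidden dependence on shape regularity appears.
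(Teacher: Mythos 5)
Your proof is correct. You reduce to the scalar inequality (legitimate, since $\bfv_h$ is constant), use the exact sine--law decomposition $\bft_e=\tfrac{\sin(\theta_M-\phi)}{\sin\theta_M}\bft_{e_1}+\tfrac{\sin\phi}{\sin\theta_M}\bft_{e_2}$ (an algebraic identity valid for every chord direction $\phi$, including directions outside the cone spanned by $\bft_{e_1},\bft_{e_2}$, where a coefficient simply turns negative and the triangle inequality still applies), and then absorb $|e|^{1/2}$ via the strip bounds $|e|\,|\sin\phi|\le |e_2|\sin\theta_M$ and $|e|\,|\sin(\theta_M-\phi)|\le |e_1|\sin\theta_M$. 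Each step checks out, and the bookkeeping $\tfrac{\sin^2\phi}{\sin\theta_M}|e|\le |e_2|$ indeed delivers the stated estimate with constant $1$.

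Your route is, however, genuinely different from the paper's. The paper first proves the lemma for a right triangle, where the decomposition is the orthogonal one $\bft_e=\cos(\theta)\bft_1+\sin(\theta)\bft_2$ and the width bounds are simply $|e|\cos\theta\le|e_1|$, $|e|\sin\theta\le|e_2|$; it then treats a general triangle by pulling back through the affine map $A=[\bft_1,\bft_2]$, with the factor $(\sin\theta_M)^{-1/2}$ emerging from $\|A^{-1}\|^{1/2}\lesssim(\sin\theta_M)^{-1/2}$. Your argument effectively merges these two steps into a single direct computation in the general triangle: the sine--law decomposition plays the role the affine map plays in the paper, and the strip bounds are the skew analogues of the right-triangle width bounds. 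What the paper's approach buys is a clean separation into a trivial reference case plus a standard mapping argument (a reusable FEM pattern); what yours buys is the absence of any pullback bookkeeping and an explicit constant $1$ in front of $(\sin\theta_M)^{-1/2}$, rather than a hidden generic constant. One small point worth stating explicitly if you write this up: the maximum of $|\sin\phi|,|\sin(\theta_M-\phi)|$ over the chord directions is attained with both bounds used only through $|\sin(\cdot)|\le 1$, so no case analysis on where $e$ sits inside $T$ is needed --- your identity-plus-strip argument is uniform in $\phi$.
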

\begin{proof}
We first consider a right-angle triangle. Clearly, $\theta_M = \pi/2$. Suppose $e_1$ is on the $x_1$ axis. Let the angle sandwiched by $e$ and $e_1$ be $\theta$. Then, we have $\bft_e = \cos(\theta) \bft_1 +  \sin(\theta) \bft_2$, and thus
\begin{equation}
\begin{split}
\label{lem_edge_max_eq1}
\| \bfv_h\cdot\bft_e \|_{0,e} & = |e|^{1/2} |\bfv_h\cdot\bft_e| \le |e|^{1/2} |\cos(\theta) | |\bfv_h\cdot\bft_1| + |e|^{1/2} |\sin(\theta) | |\bfv_h\cdot\bft_2| \\
& \le  \| \bfv_h\cdot\bft_{e_1} \|_{0,e_1} + \| \bfv_h\cdot\bft_{e_2} \|_{0,e_2},
\end{split}
\end{equation}
where we have used $|e|\cos(\theta) \le |e_1|$ and $|e|\sin(\theta) \le |e_2|$. See Figure \ref{fig:ebound} for illustration. Now, for a general triangle, we consider the affine mapping $\bfx = \mathfrak{F}(\hat{\bfx}) := A\hat{\bfx} = [\bft_1,\bft_2] \hat{\bfx}$. Clearly, there holds $\|A\|_2\lesssim 1$ and $\| A^{-1} \|_2\lesssim (\sin(\theta))^{-1}$.
Then, we obtain from \eqref{lem_edge_max_eq1} that
\begin{equation*}
\begin{split}
\label{lem_edge_max_eq2}
\| \bfv_h \cdot \bft_e \|_{0,e} & = |e|^{1/2} | \bfv_h \cdot \bft_e | \lesssim  |\hat{e}|^{1/2} | (A\bfv_h)\cdot \bft_{\hat{e}} | =  \| (A\bfv_h)\cdot \bft_{\hat{e}} \|_{0,\hat{e}}  \\
& \lesssim   \| (A\bfv_h)\cdot\bft_{\hat{e}_1} \|_{0,\hat{e}_1} + \| (A\bfv_h)\cdot\bft_{\hat{e}_2} \|_{0,\hat{e}_2}  
 \lesssim \| A^{-1} \|^{1/2} \left( |e_1|^{1/2} |\bfv_h\cdot \bft_{e_1}| + |e_2|^{1/2} |\bfv_h\cdot \bft_{e_2}| \right) \\
& \lesssim (\sin(\theta))^{-1/2} ( \| \bfv_h\cdot\bft_{e_1} \|_{0,e_1} + \| \bfv_h\cdot\bft_{e_2} \|_{0,e_2}),
\end{split}
\end{equation*}
which finishes the proof.
\end{proof}

\section{A Poincar\'e-type inequality on anisotropic elements}
\begin{lemma}
\label{lem_poin_anis}
Let $P$ be a convex polyhedron with $F$ being one of its faces, and let $l_F$ be the supporting height of $F$. Assume the projection of $P$ onto the plane containing $F$ is exactly $F$. Then, for $u\in H^1(P),$ there holds
\begin{equation}
\label{lem_poin_anis_eq0}
\| u  \|_{0,P} \lesssim l^{1/2}_F \| u \|^2_{0,F} + l_F \| \nabla u \|_{0,P}.
\end{equation}
\end{lemma}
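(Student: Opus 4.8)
The plan is to fiber $P$ over the face $F$ and anchor each one‑dimensional slice at its trace on $F$, then apply a fundamental‑theorem‑of‑calculus estimate along each fiber. First I would fix coordinates so that the plane containing $F$ is $\{x_3=0\}$ and $P$ lies in the half‑space $\{x_3\ge 0\}$ (possible since $F$ is a face, hence $P$ sits on one side of the supporting plane of $F$), writing a generic point as $(\bfx',x_3)$ with $\bfx'$ in the plane of $F$. The hypothesis that the orthogonal projection of $P$ onto this plane equals $F$, together with convexity of $P$, guarantees that for each $\bfx'\in F$ the fiber $\{x_3:(\bfx',x_3)\in P\}$ is a single interval $[0,g(\bfx')]$: its lower endpoint is $0$ because $(\bfx',0)\in F\subset P$ and $P\subseteq\{x_3\ge 0\}$, and its length satisfies $g(\bfx')\le l_F$ by the definition of the supporting height. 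This fiber structure, which is the only place where both geometric hypotheses are used, is what lets me write $\int_P = \int_F\!\int_0^{g(\bfx')}$ and pin each slice to its value on $F$; verifying it (especially that the lower endpoint is exactly $0$) is where I expect the only genuine subtlety to lie.

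By density of $C^\infty(\bar P)$ in $H^1(P)$ it suffices to prove the estimate for smooth $u$ and pass to the limit, using continuity of the trace $H^1(P)\to L^2(F)$ and of $u\mapsto\|u\|_{0,P}$ and $u\mapsto\|\nabla u\|_{0,P}$. For smooth $u$, the fundamental theorem of calculus along a fiber gives $u(\bfx',x_3)=u(\bfx',0)+\int_0^{x_3}\partial_{x_3}u(\bfx',\zeta)\,d\zeta$, and squaring and applying Cauchy--Schwarz yields $|u(\bfx',x_3)|^2\le 2|u(\bfx',0)|^2 + 2x_3\int_0^{g(\bfx')}|\partial_{x_3}u(\bfx',\zeta)|^2\,d\zeta$.

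Finally I would integrate this over the fiber $x_3\in[0,g(\bfx')]$ and then over $\bfx'\in F$. The first term contributes $2g(\bfx')|u(\bfx',0)|^2\le 2l_F|u(\bfx',0)|^2$, integrating over $F$ to $2l_F\|u\|_{0,F}^2$; the second contributes $g(\bfx')^2\int_0^{g(\bfx')}|\partial_{x_3}u|^2\le l_F^2\int_0^{g(\bfx')}|\partial_{x_3}u|^2$, integrating over $F$ to $l_F^2\|\partial_{x_3}u\|_{0,P}^2\le l_F^2\|\nabla u\|_{0,P}^2$. Collecting gives $\|u\|_{0,P}^2\lesssim l_F\|u\|_{0,F}^2 + l_F^2\|\nabla u\|_{0,P}^2$, and taking square roots (with $\sqrt{a+b}\le\sqrt a+\sqrt b$) produces the claimed bound; here I read the exponent on $\|u\|_{0,F}$ in the statement as a typo, the dimensionally consistent right‑hand side being $l_F^{1/2}\|u\|_{0,F}+l_F\|\nabla u\|_{0,P}$. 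The crucial feature is that the resulting constant is absolute, independent of the shape of $F$ and of $l_F$, which is precisely the anisotropy‑robust Poincar\'e estimate the lemma is meant to deliver for possibly shrinking prisms.
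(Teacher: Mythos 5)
Your proposal is correct and follows essentially the same route as the paper's own proof: set coordinates with $F$ in the plane $\{x_3=0\}$, write each value of $u$ as its trace on $F$ plus a fiber integral of $\partial_{x_3}u$, then square, apply Cauchy--Schwarz (H\"older), and integrate over the fibers and over $F$ to get $\|u\|_{0,P}^2 \le 2l_F\|u\|_{0,F}^2 + 2l_F^2\|\nabla u\|_{0,P}^2$. Your added care about the fiber structure (intervals anchored at $0$, using convexity and the projection hypothesis) and the density argument only make explicit what the paper leaves implicit, and your reading of the exponent on $\|u\|_{0,F}$ as a typo is right — the paper's own final display confirms the dimensionally consistent form $l_F^{1/2}\|u\|_{0,F} + l_F\|\nabla u\|_{0,P}$.
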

\begin{proof}
Without loss of generality, we assume that $F$ is on the $x_1x_2$ plane. For each $\bfx = (\xi_1,\xi_2,\xi_3)\in P$, let $\bfx_F= (\xi_1,\xi_2,0)$ be the projection of $\bfx$ onto ${F}$, and let $l(\bfx_F)$ be the height at $\bfx_F$.  We can write $w(\bfx) - w(\bfx_F) = \int_{0}^{\xi_3} \partial_{x_3}w \dd x_3$ and derive
\begin{equation}
\begin{split}
\label{lem_poin_anis_eq1}
\| u \|^2_{0,P} &= \int_{{F}}  \int_0^{l(\bfx_F)}  \left( u(\bfx_F) + \int_{0}^{\xi_3} \partial_{x_3}u \dd x_3 \right)^2 \dd x_3 \dd \bfx_F \\
& \le 2 \int_{F} \int_0^{l(\bfx_F)}  | u(\bfx_F) |^2  \dd x_3 \dd \bfx_F +  \int_{F} \int_0^{l(\bfx_F)} \left( \int_{0}^{\xi_3} \partial_{x_3}u \dd x_3 \right)^2 \dd \xi_3  \dd \bfx_F   \\
& \le 2 l_F \| u \|^2_{0,F} + 2l^2_F \| \nabla u \|^2_{0,P}
\end{split}
\end{equation}
where in the last inequality we have also used H\"older's inequality.
\end{proof}

\end{appendices}

 


\bibliographystyle{siamplain}
\bibliography{vem.bib,RuchiBib.bib}

\end{document}